\newtheorem{theorem}{Theorem}[section]
\newtheorem{lemma}[theorem]{Lemma}
\theoremstyle{definition}
\newtheorem{definition}[theorem]{Definition}
\theoremstyle{remark}
\newtheorem{remark}[theorem]{Remark}
\numberwithin{equation}{section}
\title{Asymptotics of greedy energy sequences on the unit circle and the sphere}
\date{\today}
\author{Abey L\'{o}pez-Garc\'{i}a\footnotemark[1] \quad Ryan E. McCleary\footnotemark[2]}
\begin{document}

\maketitle

\renewcommand{\thefootnote}{\fnsymbol{footnote}}
\footnotetext[1]{Department of Mathematics, University of Central Florida, 4393 Andromeda Loop North, Orlando, FL 32816, USA. email: abey.lopez-garcia\symbol{'100}ucf.edu.}
\footnotetext[2]{Department of Mathematics, University of Central Florida, 4393 Andromeda Loop North, Orlando, FL 32816, USA. email: remccleary\symbol{'100}knights.ucf.edu.}

\begin{abstract} 
For a parameter $\lambda>0$, we investigate greedy $\lambda$-energy sequences $(a_{n})_{n=0}^{\infty}$ on the unit sphere $S^{d}\subset\mathbb{R}^{d+1}$, $d\geq 1$, satisfying the defining property that each $a_{n}$, $n\geq 1$, is a point where the potential $\sum_{k=0}^{n-1}|x-a_{k}|^{\lambda}$ attains its maximum value on $S^{d}$. We show that these sequences satisfy the symmetry property $a_{2k+1}=-a_{2k}$ for every $k\geq 0$. The asymptotic distribution of the sequence undergoes a sharp transition at the value $\lambda=2$, from uniform distribution ($\lambda<2$) to concentration on two antipodal points ($\lambda>2$). We investigate first-order and second-order asymptotics of the $\lambda$-energy of the first $N$ points of the sequence, as well as the asymptotic behavior of the extremal values $\sum_{k=0}^{n-1}|a_{n}-a_{k}|^{\lambda}$. The second-order asymptotics is analyzed on the unit circle. It is shown that this asymptotic behavior differs significantly from that of $N$ equally spaced points on the unit circle, and a transition in the behavior takes place at $\lambda=1$.

\smallskip

\textbf{Keywords:} Greedy $\lambda$-energy sequence, Leja sequence, maximal distribution, potential, binary representation, Riemann zeta function.

\smallskip

\textbf{MSC 2020:} Primary 31C20, 31B15; Secondary 11M06.

\end{abstract}

\section{Introduction}

Let $S^{d}=\{x\in\mathbb{R}^{d+1}: |x|=1\}$ be the unit sphere of dimension $d\geq 1$. In this paper, $d$ will always refer to the dimension of the sphere we are considering, and so it is always a positive integer. Let $\lambda>0$, and let $\omega_{N}=(y_{k})_{k=0}^{N-1}$ be a configuration of $N\geq 2$ points on $S^{d}$, not necessarily distinct. We define 
\begin{equation}\label{def:energy}
H_{\lambda}(\omega_{N}):=\sum_{0\leq i\neq j\leq N-1}|y_{i}-y_{j}|^{\lambda}=2\sum_{0\leq i<j\leq N-1}|y_{i}-y_{j}|^{\lambda},
\end{equation} 
where $|\cdot|$ indicates the Euclidean norm.

The goal of this paper is the investigation of sequences $(a_{n})_{n=0}^{\infty}\subset S^{d}$ that satisfy the following condition:
\begin{equation}\label{recurcond}
\sum_{k=0}^{n-1}|a_{n}-a_{k}|^{\lambda}=\max_{x\in S^{d}}\sum_{k=0}^{n-1}|x-a_{k}|^{\lambda},\qquad\mbox{for all}\,\,n\geq 1.
\end{equation}
One can view \eqref{recurcond} as a recursive algorithm that generates the entire sequence $(a_{n})_{n=0}^{\infty}$, starting from an initial input $a_{0}$. Note that for a given $n\geq 2$, the choice of a point $a_{n}\in S^{d}$ that satisfies \eqref{recurcond} need not be unique. When studying these sequences in the special framework of the unit circle $S^{1}$, we will assume for convenience that the first point in the sequence is
\begin{equation}\label{initcond}
a_{0}=1.
\end{equation}
Similarly defined sequences, satisfying a condition as in \eqref{recurcond} but instead minimizing the Riesz potential $\sum_{k=0}^{n-1}|x-a_{k}|^{-s}$, $s>0$, or logarithmic potential $\sum_{k=0}^{n-1}\log\frac{1}{|x-a_{k}|}$, include the well-known Leja (also called Leja-G\'{o}rski) sequences. These sequences were first studied from the point of view of their energy asymptotics and distribution by Edrei \cite{Edrei}\footnotemark[4]\footnotetext[4]{In \cite{Edrei}, Leja sequences on a compact set $E\subset\mathbb{C}$ are introduced in page 78. Edrei then shows that the normalized Vandermonde determinant $|V(a_{0},\ldots,a_{n-1})|^{2/n^2}$ approaches the transfinite diameter of $E$.}, Leja \cite{Leja}, G\'{o}rski \cite{Gorski}, and Siciak \cite{Siciak}. Recently, potential-theoretic properties of these extremal sequences were investigated in  \cite{CorDra, Gotz, HRSV, Lop, LopSaff, LopWag, Prit1, Prit2, ST}. In \cite{LopSaff}, more general \emph{greedy energy sequences} associated with general kernels were defined and studied. By analogy, we will also refer to the sequences investigated in the present work as \emph{greedy $\lambda$-energy sequences}. 

For a greedy $\lambda$-energy sequence $(a_{n})_{n=0}^{\infty}\subset S^{d}$ satisfying \eqref{recurcond}, we will use the notation
\begin{equation}\label{def:alphaNlambda}
\alpha_{N,\lambda}:=(a_{0},\ldots,a_{N-1}), \qquad N\geq 1.
\end{equation}   
We call $\alpha_{N,\lambda}$ the $N$-th section of the sequence. If the value of $\lambda$ is clear from the context, we may simply write $\alpha_{N}$, or $H(\alpha_{N})$ in reference to the energy \eqref{def:energy} of $\alpha_{N}$. 

We will denote by $\sigma_{N,\lambda}$ the normalized counting measure
\begin{equation}\label{def:countmeas}
\sigma_{N,\lambda}:=\frac{1}{N}\sum_{k=0}^{N-1}\delta_{a_{k}}
\end{equation}
associated with the $N$th section $\alpha_{N,\lambda}$, where $\delta_{a}$ is the Dirac unit measure at $a$. If $\mu$ and $(\mu_{n})_{n\in\mathbb{N}}$ are probability measures on $S^{d}$, then
\[
\mu_{n}\stackrel{\ast}{\longrightarrow}\mu
\]
means that for every continuous function $f:S^{d}\longrightarrow\mathbb{R}$, we have
\[
\lim_{n\rightarrow\infty}\int f\,d\mu_{n}=\int f\,d\mu.
\]

In this paper, $\mathcal{M}_{d}$ denotes the space of all probability distributions on $S^{d}$, and $\sigma_{d}\in\mathcal{M}_{d}$ denotes the normalized uniform measure on $S^{d}$. Given $\mu\in\mathcal{M}_{d}$, we define its $\lambda$-energy 
\begin{equation}\label{eq:defcontenerg}
I_{\lambda}(\mu):=\iint_{S^{d}\times S^{d}}|x-y|^{\lambda}\,d\mu(x)\,d\mu(y), \qquad \lambda>0.
\end{equation}
For a given $\lambda>0$, a distribution $\sigma\in\mathcal{M}_{d}$ is called \emph{maximal} if  
\[
I_{\lambda}(\sigma)=\sup_{\mu\in\mathcal{M}_{d}}I_{\lambda}(\mu).
\]
The existence of maximal distributions follows from a standard convergence argument in potential theory. Maximal distributions were described by Bj\"{o}rck in \cite{Bjorck}. We cite his result. 

\begin{theorem}[Bj\"{o}rck \cite{Bjorck}]\label{theo:Bjorck}
For $0<\lambda<2$, the measure $\sigma_{d}$ is the unique maximal distribution in $\mathcal{M}_{d}$. For $\lambda=2$, a distribution $\sigma\in\mathcal{M}_{d}$ is maximal if and only if its center of mass is at the origin, i.e., it satisfies \eqref{eq:centermass}. For $\lambda>2$, a distribution $\sigma\in\mathcal{M}_{d}$ is maximal if and only if it is of the form $\sigma=\frac{1}{2}(\delta_{a}+\delta_{-a})$ for some $a\in S^{d}$.
\end{theorem}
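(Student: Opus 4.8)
The plan is to reduce everything to a scalar kernel in the inner product via the identity $|x-y|^2=2-2\la x,y\ra$, valid for $x,y\in S^d$, which gives $|x-y|^\lambda=2^{\lambda/2}(1-\la x,y\ra)^{\lambda/2}$ and hence $I_\lambda(\mu)=2^{\lambda/2}\iint h(\la x,y\ra)\,d\mu(x)\,d\mu(y)$ with $h(u):=(1-u)^{\lambda/2}$. The transition at $\lambda=2$ should then be read off from the sign of $h''(u)=\frac{\lambda}{2}(\frac{\lambda}{2}-1)(1-u)^{\lambda/2-2}$: the function $h$ is concave for $\lambda<2$, affine for $\lambda=2$, and strictly convex for $\lambda>2$.

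For $\lambda\ge 2$ I would run a convexity (chord) argument. Since $h$ is convex on $[-1,1]$ and agrees with the affine function $u\mapsto 2^{\lambda/2-1}(1-u)$ at the endpoints $u=\pm 1$, it lies below this chord, so $h(\la x,y\ra)\le 2^{\lambda/2-1}(1-\la x,y\ra)$. Integrating, $I_\lambda(\mu)\le 2^{\lambda-1}\iint(1-\la x,y\ra)\,d\mu\,d\mu=2^{\lambda-1}(1-|c(\mu)|^2)\le 2^{\lambda-1}$, where $c(\mu)=\int x\,d\mu(x)$ is the center of mass and I used $\iint\la x,y\ra\,d\mu(x)\,d\mu(y)=|c(\mu)|^2$. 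A direct computation shows $\tfrac12(\delta_a+\delta_{-a})$ attains the value $2^{\lambda-1}$, so the bound is sharp. The equality analysis is the heart of this case. For $\lambda=2$, $h$ is affine so the chord step is an identity and only $c(\mu)=0$ is forced, giving exactly the center-of-mass characterization. For $\lambda>2$, $h$ is strictly convex, so equality forces $\la x,y\ra\in\{-1,1\}$ for $\mu\otimes\mu$-almost every $(x,y)$, while $c(\mu)=0$ must also hold. I would then show by a support argument that the first condition forces $\supp\mu\subseteq\{a,-a\}$ for a single $a$: if $\supp\mu$ contained points $p,q$ with $q\ne\pm p$, then choosing small balls $U\ni p$, $V\ni q$ with $U\cap V=\emptyset$ and $U\cap(-V)=\emptyset$ yields a product set $U\times V$ of positive $\mu\otimes\mu$-measure on which $\la x,y\ra\in(-1,1)$, a contradiction. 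Writing $\mu=p\,\delta_a+(1-p)\,\delta_{-a}$, the constraint $c(\mu)=(2p-1)a=0$ forces $p=\tfrac12$.

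For $0<\lambda<2$ the chord inequality reverses and is useless, so here I would instead invoke the strict conditional negative definiteness of the kernel $|x-y|^\lambda$. The clean input is Schoenberg's integral representation $r^\lambda=c_\lambda\int_0^\infty(1-e^{-t r^2})\,t^{-1-\lambda/2}\,dt$ with $c_\lambda>0$, valid precisely for $0<\lambda<2$ (this exponent range is exactly what makes the integral converge at both ends). Since each Gaussian $e^{-t|x-y|^2}$ is strictly positive definite on $S^d$, the combination $1-e^{-t|x-y|^2}$ is strictly conditionally negative definite, and integrating against the positive measure $t^{-1-\lambda/2}\,dt$ shows $\iint|x-y|^\lambda\,d\nu(x)\,d\nu(y)<0$ for every nonzero signed measure $\nu$ with $\nu(S^d)=0$. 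Applying this to $\nu=\mu-\sigma_d$ and using that $\int|x-y|^\lambda\,d\sigma_d(y)$ is constant in $x$ by rotational invariance (so the cross term equals $2I_\lambda(\sigma_d)$), the inequality collapses to $I_\lambda(\mu)<I_\lambda(\sigma_d)$ for all $\mu\ne\sigma_d$, giving uniqueness of the uniform maximizer.

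The two genuinely non-routine points are the positive-definiteness input in the subcritical case, namely the strict conditional negative definiteness of $|x-y|^\lambda$, which is where Schoenberg's theory enters; and, in the supercritical case, the passage from the almost-everywhere endpoint condition $\la x,y\ra\in\{-1,1\}$ to the rigid conclusion $\supp\mu=\{a,-a\}$. The remaining steps are elementary convexity and second-moment computations.
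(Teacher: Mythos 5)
This statement is Bj\"{o}rck's theorem, which the paper imports from \cite{Bjorck} without proof, so there is no internal argument to compare against; your proposal has to stand on its own, and it does. The reduction to $h(u)=(1-u)^{\lambda/2}$ and the chord argument for $\lambda\ge 2$ are correct: the two inequalities $I_{\lambda}(\mu)\le 2^{\lambda-1}(1-|c(\mu)|^2)\le 2^{\lambda-1}$ must both be equalities at a maximizer, the second forces $c(\mu)=0$, and the first, combined with strict convexity of $h$ on $(-1,1)$ for $\lambda>2$, forces $\la x,y\ra\in\{-1,1\}$ for $\mu\otimes\mu$-a.e.\ pair; your neighborhood argument on the support and the final weight computation are sound, and the affine case $\lambda=2$ correctly yields exactly the center-of-mass characterization. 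For $0<\lambda<2$ the Schoenberg representation $r^{\lambda}=c_{\lambda}\int_0^{\infty}(1-e^{-tr^2})t^{-1-\lambda/2}\,dt$ converges precisely in that range, and the deduction $\iint|x-y|^{\lambda}\,d\nu\,d\nu<0$ for nonzero $\nu$ with $\nu(S^d)=0$, applied to $\nu=\mu-\sigma_d$ together with the constancy of the potential of $\sigma_d$, is the standard route (essentially the treatment in \cite{BorHarSaff}). The only places you lean on unproved inputs are exactly the ones you flag: you need the \emph{measure-level} statement that $\iint e^{-t|x-y|^2}\,d\nu(x)\,d\nu(y)>0$ for every nonzero finite signed measure $\nu$ (not merely positive definiteness on finite point sets), which follows from the strict positivity of the Gaussian's Fourier transform, and a Fubini interchange in $t$, which is justified because the kernel is bounded on the compact sphere and $|\nu|$ is finite. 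With those two standard facts granted, the proof is complete and correct.
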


We remark that for $0<\lambda<2$, we have
\[
I_{\lambda}(\sigma_{d})=\frac{\Gamma(\frac{d+1}{2})\Gamma(d+\lambda)}{\Gamma(\frac{d+\lambda+1}{2})\Gamma(d+\frac{\lambda}{2})}=\frac{2^{d+\lambda-1}}{\sqrt{\pi}}\frac{\Gamma(\frac{d+1}{2})\Gamma(\frac{d+\lambda}{2})}{\Gamma(d+\frac{\lambda}{2})},
\]
see e.g. \cite[Proposition 4.6.4]{BorHarSaff}, and for $\lambda\geq 2$, and any corresponding maximal distribution $\sigma$,
\[
I_{\lambda}(\sigma)=2^{\lambda-1},
\]
see \cite[Theorems 6 and 7]{Bjorck}.

In this paper, $U_{n}(x)=U_{n}^{\lambda}(x)$ will denote the discrete potential
\begin{equation}\label{def:discrpot}
U_{n}(x):=\sum_{k=0}^{n-1}|x-a_{k}|^{\lambda},\qquad n\geq 1.
\end{equation}
Since the value of $\lambda$ will be clear from the context, we will omit the superscript $\lambda$ when indicating $U_{n}$. 

Our main goal in this work is the asymptotic analysis of the sequences $H_{\lambda}(\alpha_{N,\lambda})$, $U_{n}(a_{n})$, and $\sigma_{N,\lambda}$. We first show that for any values of $d\geq 1$ and $\lambda>0$, greedy $\lambda$-energy sequences $(a_{n})_{n=0}^{\infty}\subset S^{d}$ satisfy the symmetry property 
\begin{equation}\label{int:symprop}
a_{2k+1}=-a_{2k},\qquad \mbox{for all}\quad k\geq 0,
\end{equation}
see Theorem~\ref{theo:symmprop}. This property allows us to compute half the points of a greedy sequence automatically and concentrate the computational effort on obtaining the points $a_{2k}$ with even index. In the range $0<\lambda<2$, greedy sequences are uniformly distributed on $S^{d}$, see \eqref{eq:asympdistr}, whereas in the case $\lambda>2$, we have $\{a_{2k}, a_{2k+1}\}=\{a_{0}, a_{1}\}$ for all $k\geq 0$, so the sequence is restricted to its first two points, see Theorem~\ref{theo:caselamg2}. In the intermediate case $\lambda=2$, a sequence $(a_{n})$ is a greedy $2$-energy sequence if and only if it satisfies \eqref{int:symprop}, and so in this case it is possible that $(\sigma_{N,2})$ diverges, see Theorem~\ref{theo:caseletwo} and Remark~\ref{rmk:distrletwo}.

In the range $0<\lambda<2$, the first-order asymptotic behavior of the sequences $(H_{\lambda}(\alpha_{N,\lambda}))$ and $(U_{n}(a_{n}))$ is described by formulas \eqref{eq:leadenergasymp} and \eqref{asympUnan}. These formulas immediately suggest the study of the sequences $(H_{\lambda}(\alpha_{N,\lambda})-N^{2} I_{\lambda}(\sigma_{d}))$ and $(U_{n}(a_{n})-n I_{\lambda}(\sigma_{d}))$, or second-order asymptotics, which we develop in detail in dimension $d=1$. The asymptotic analysis of the sequence $(H_{\lambda}(\alpha_{N,\lambda})-N^{2} I_{\lambda}(\sigma_{1}))$ needs to be divided in three regimes; $0<\lambda<1$, $\lambda=1$, $1<\lambda<2$, as we have different orders of growth $H_{\lambda}(\alpha_{N,\lambda})-N^{2} I_{\lambda}(\sigma_{1})=O(\kappa_{\lambda}(N))$, $N\rightarrow\infty$, where 
\[
\kappa_{\lambda}(N)=\begin{cases}
N^{1-\lambda} & 0<\lambda<1,\\
\log N & \lambda=1,\\
1 & 1<\lambda<2.
\end{cases}
\]
It turns out that the bounded sequences $((H_{\lambda}(\alpha_{N,\lambda})-N^{2} I_{\lambda}(\sigma_{1}))/\kappa_{\lambda}(N))$ are also divergent, see Theorems~\ref{theo:casello}, \ref{theo:caselgo}, and \ref{theo:lambdaeq1}. 

Let $\mathcal{L}_{\lambda}(N)$ denote the $\lambda$-energy of the configuration formed by the $N$th roots of unity on the unit circle.\footnotemark[4]\footnotetext[4]{In the range $0<\lambda<2$, $\mathcal{L}_{\lambda}(N)$ is the largest value for the $\lambda$-energy of an $N$-point configuration on $S^{1}$.} If we compare, in the range $0<\lambda<2$ and on the unit circle, the asymptotics of $H_{\lambda}(\alpha_{N,\lambda})$ and $\mathcal{L}_{\lambda}(N)$, we see that they coincide in first-order asymptotics, but they differ significantly at the level of second-order asymptotics. Indeed, we know by \cite[Theorem 1.1]{BrauHardSaff} that $\mathcal{L}_{\lambda}(N)-I_{\lambda}(\sigma_{1})N^{2}=O(N^{1-\lambda})$ for all $\lambda\in(0,2)$, and the sequence $(\mathcal{L}_{\lambda}(N)-I_{\lambda}(\sigma_{1})N^{2})/N^{1-\lambda}$ converges. 

In the range $0<\lambda<2$, we show that the sequence $(U_{n}(a_{n})-n I_{\lambda}(\sigma_{1}))$ is bounded and divergent, and we have $0<U_{n}(a_{n})-n I_{\lambda}(\sigma_{1})<I_{\lambda}(\sigma_{1})$ for all $n\geq 1$, where the bounds are sharp, see Theorem~\ref{theo:normpot}. 

We illustrate graphically the sequences $(U_{n}(a_{n})-n I_{\lambda}(\sigma_{1}))$ (see Figs.~\ref{plotsecord5}, \ref{plotsecord6}) and $((H_{\lambda}(\alpha_{N,\lambda})-N^{2} I_{\lambda}(\sigma_{1}))/\kappa_{\lambda}(N))$ (see Figs.~\ref{plotsecorder2}, \ref{plotsecord4}, \ref{plotsecorder3}). These plots show a rather regular and rich behavior of these sequences, which was not anticipated by the authors.

The organization of this paper is simple. In Section~\ref{sec:symmprop} we prove the symmetry property. In the rest of the sections, we organize the results obtained in the different ranges for $\lambda$, namely $0<\lambda<2$, $\lambda=2$, and $\lambda>2$. 

\section{Symmetry property}\label{sec:symmprop}

\begin{theorem}\label{theo:symmprop}
Let $\lambda>0$ and $d\geq 1$ be arbitrary, and let $(a_n)_{n=0}^{\infty} \subset S^d$ be a greedy $\lambda$-energy sequence. For any odd index $n=2k+1$, $k\geq 0$, the function $U_{n}(x)=\sum_{j=0}^{n-1}|x-a_{j}|^{\lambda}$ attains its maximum value at a unique point on $S^{d}$, which is $-a_{n-1}=-a_{2k}$, i.e., we have   
\begin{equation}\label{eq:symmrel}
a_{2k+1}=-a_{2k},\qquad \mbox{for all}\quad k\geq 0.
\end{equation}
For any even index $n=2k\geq 2$, we have
\begin{equation}\label{eq:symmpoteven}
U_{2k}(x)=U_{2k}(-x),\qquad \mbox{for all}\,\,x\in S^{d}.
\end{equation}
\end{theorem}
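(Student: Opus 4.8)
The plan is to argue by induction on $k$, establishing the odd-index relation \eqref{eq:symmrel} and the even-index symmetry \eqref{eq:symmpoteven} together in a single inductive loop. The whole argument rests on two elementary facts. First, for $a\in S^{d}$ the function $x\mapsto|x-a|^{\lambda}$ on $S^{d}$ attains its maximum value $2^{\lambda}$ at the unique point $x=-a$: the diameter $2$ of $S^{d}$ is realized only by antipodal pairs, and $t\mapsto t^{\lambda}$ is strictly increasing for $\lambda>0$. Second, if several functions share a common maximizer, then their sum is maximized there, and the maximizer of the sum is unique as soon as one summand has a unique maximizer.

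For the base case $k=0$, the potential in \eqref{def:discrpot} reads $U_{1}(x)=|x-a_{0}|^{\lambda}$, which by the first fact is uniquely maximized at $-a_{0}$; hence the defining property \eqref{recurcond} forces $a_{1}=-a_{0}$. For the inductive step, suppose $a_{2j+1}=-a_{2j}$ holds for all $j<k$, with $k\geq 1$. Substituting these relations into \eqref{def:discrpot}, I would group the $2k$ terms of $U_{2k}$ into $k$ antipodal pairs,
\[
U_{2k}(x)=\sum_{j=0}^{k-1}\left(|x-a_{2j}|^{\lambda}+|x+a_{2j}|^{\lambda}\right),
\]
and observe that replacing $x$ by $-x$ merely swaps the two terms within each pair; this proves \eqref{eq:symmpoteven} for the current value of $k$. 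Because $a_{2k}$ maximizes $U_{2k}$ by \eqref{recurcond}, the symmetry just established shows that $-a_{2k}$ is \emph{also} a maximizer of $U_{2k}$.

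To close the loop, I would write $U_{2k+1}(x)=U_{2k}(x)+|x-a_{2k}|^{\lambda}$ and bound the two summands separately: $U_{2k}(x)\leq U_{2k}(a_{2k})=U_{2k}(-a_{2k})$, while $|x-a_{2k}|^{\lambda}\leq 2^{\lambda}$ with equality only at $x=-a_{2k}$. Both bounds are attained simultaneously at $x=-a_{2k}$, so $U_{2k+1}$ is maximized there; and since the second summand already pins the maximizer uniquely at the antipode, the maximizer of the sum is unique as well. Thus \eqref{recurcond} yields $a_{2k+1}=-a_{2k}$, completing the induction. I do not anticipate a serious obstacle here; the only point requiring care is the ordering of the induction, namely that the symmetry of $U_{2k}$ must be deduced (from all earlier antipodal pairs) \emph{before} it is invoked, since it is precisely this symmetry that guarantees $-a_{2k}$ lies among the maximizers of $U_{2k}$ and thereby lets the decomposition of $U_{2k+1}$ concentrate its maximum at the antipode.
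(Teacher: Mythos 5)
Your proposal is correct and follows essentially the same route as the paper: induct on the antipodal relation, use it to pair the terms of $U_{2k}$ and deduce $U_{2k}(x)=U_{2k}(-x)$, conclude that $-a_{2k}$ is also a maximizer of $U_{2k}$, and then observe that adding $|x-a_{2k}|^{\lambda}$, whose unique maximizer on $S^{d}$ is $-a_{2k}$, forces $U_{2k+1}$ to have its unique maximum there. The ordering issue you flag at the end is exactly the point the paper's induction is organized around, and your treatment of it is sound.
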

\begin{proof}
First, it is obvious that $a_1 = -a_0$. Now assume that $a_{2k+1} = -a_{2k}$ for all $0 \leq k \leq l$. Let
\[
U(x) := \sum_{k=0}^{l} \left( |x - a_{2k}|^{\lambda} + |x - a_{2k+1}|^{\lambda} \right),\qquad x\in S^{d}.
\]
By assumption, this can be rewritten as
\[
U(x) = \sum_{k=0}^{l} \left( |x - a_{2k}|^{\lambda} + |x + a_{2k}|^{\lambda} \right),
\]
hence $U(x)=U(-x)$ for all $x\in S^{d}$. By definition, $a_{2l+2}$ must maximize $U$. We will show now that the choice of $a_{2l+3}$ is unique and that $a_{2l+3} = -a_{2l+2}$.

By definition, $a_{2l+3}$ must maximize the function
\[
U(x) + |x - a_{2l+2}|^{\lambda}\qquad x\in S^{d}.
\]
Since $U(x)$ has a maximum at $-a_{2l+2}$ and the function $x \mapsto |x - a_{2l+2}|^{\lambda}$ has a unique maximum at 
$-a_{2l+2}$, necessarily $a_{2l+3}=-a_{2l+2}$. We have proved \eqref{eq:symmrel} by induction, and \eqref{eq:symmpoteven} follows.
\end{proof}

\section{The case $0<\lambda<2$}\label{sec:firstorder}
In this section we present results valid in the range $0<\lambda<2$.

\subsection{First-order asymptotics and uniform distribution}

We describe in this subsection the first-order asymptotic behavior of the sequences $\left(H_{\lambda}(\alpha_{N,\lambda})\right)_{N=2}^{\infty}$ and $(U_{n}(a_{n}))_{n=1}^{\infty}$. It is also shown that in the range $0<\lambda<2$, greedy $\lambda$-energy sequences are uniformly distributed on $S^{d}$.

\begin{theorem}
Assume $0<\lambda<2$, let $d\geq 1$ be arbitrary, and let $(a_{n})_{n=0}^{\infty}\subset S^{d}$ be a greedy $\lambda$-energy sequence. The associated sequence of configurations \eqref{def:alphaNlambda} satisfies 
\begin{equation}\label{eq:boundenergy}
N(N-1) I_{\lambda}(\sigma_{d})< H_{\lambda}(\alpha_{N,\lambda})< N^{2} I_{\lambda}(\sigma_{d}),\qquad N\geq 2.
\end{equation}
In particular,
\begin{equation}\label{eq:leadenergasymp}
\lim_{N \to \infty} \frac{H_{\lambda}(\alpha_{N,\lambda})}{N^2}=I_{\lambda}(\sigma_{d}).
\end{equation}
For the sequence \eqref{def:countmeas} we have
\begin{equation}\label{eq:asympdistr}
\sigma_{N,\lambda}\stackrel{\ast}{\longrightarrow}\sigma_{d}.
\end{equation}
\end{theorem}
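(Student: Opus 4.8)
The plan is to reduce everything to two elementary reformulations of the energy. First, writing $\sigma_{N,\lambda}=\frac{1}{N}\sum_{k=0}^{N-1}\delta_{a_{k}}$ and noting that the diagonal terms $|a_{k}-a_{k}|^{\lambda}$ vanish, I would observe that
\[
I_{\lambda}(\sigma_{N,\lambda})=\frac{1}{N^{2}}\sum_{i,j}|a_{i}-a_{j}|^{\lambda}=\frac{1}{N^{2}}\,H_{\lambda}(\alpha_{N,\lambda}).
\]
Second, grouping the terms of $H_{\lambda}(\alpha_{N,\lambda})$ according to their larger index gives
\[
H_{\lambda}(\alpha_{N,\lambda})=2\sum_{n=1}^{N-1}\sum_{k=0}^{n-1}|a_{n}-a_{k}|^{\lambda}=2\sum_{n=1}^{N-1}U_{n}(a_{n}).
\]
The upper bound in \eqref{eq:boundenergy} is then immediate: since $\sigma_{N,\lambda}\in\mathcal{M}_{d}$ and $\sigma_{d}$ is maximal, $I_{\lambda}(\sigma_{N,\lambda})\le I_{\lambda}(\sigma_{d})$, i.e.\ $H_{\lambda}(\alpha_{N,\lambda})\le N^{2}I_{\lambda}(\sigma_{d})$; strictness follows because, for $0<\lambda<2$, Theorem~\ref{theo:Bjorck} guarantees that $\sigma_{d}$ is the \emph{unique} maximizer, while $\sigma_{N,\lambda}$ is finitely supported and hence distinct from $\sigma_{d}$.

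For the lower bound I would combine the defining greedy property with the fact that a maximum dominates an average. Since $a_{n}$ maximizes $U_{n}$ on $S^{d}$, for any probability measure one has $U_{n}(a_{n})\ge\int_{S^{d}}U_{n}\,d\mu$; taking $\mu=\sigma_{d}$ and using the rotational invariance of $\sigma_{d}$, the quantity $\int_{S^{d}}|x-a_{k}|^{\lambda}\,d\sigma_{d}(x)$ is a constant independent of $a_{k}$, and integrating this constant once more against $\sigma_{d}$ identifies it as $I_{\lambda}(\sigma_{d})$. Hence $U_{n}(a_{n})\ge n\,I_{\lambda}(\sigma_{d})$, with strict inequality because $U_{n}$ is continuous and non-constant while $\sigma_{d}$ has full support, so its $\sigma_{d}$-average is strictly below its maximum. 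Summing through the second identity yields $H_{\lambda}(\alpha_{N,\lambda})>2\sum_{n=1}^{N-1}n\,I_{\lambda}(\sigma_{d})=N(N-1)I_{\lambda}(\sigma_{d})$. Dividing \eqref{eq:boundenergy} by $N^{2}$ and squeezing, using $N(N-1)/N^{2}\to 1$, gives the first-order limit \eqref{eq:leadenergasymp}.

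For the distributional statement \eqref{eq:asympdistr} I would argue by compactness. Since $S^{d}$ is compact, $\mathcal{M}_{d}$ is weak-$*$ sequentially compact, so it suffices to show that every weak-$*$ limit point of $(\sigma_{N,\lambda})$ equals $\sigma_{d}$. The crucial point --- and the step that deserves the most care --- is that for $\lambda>0$ the kernel $|x-y|^{\lambda}$ is bounded and continuous on $S^{d}\times S^{d}$ (there is no singularity, in contrast to Riesz kernels with negative exponent), so $\mu\mapsto I_{\lambda}(\mu)$ is weak-$*$ continuous: if $\sigma_{N_{j},\lambda}\stackrel{\ast}{\longrightarrow}\mu$, then $\sigma_{N_{j},\lambda}\times\sigma_{N_{j},\lambda}\stackrel{\ast}{\longrightarrow}\mu\times\mu$ on the product space (checked on products $g(x)h(y)$ and extended by Stone--Weierstrass), whence $I_{\lambda}(\sigma_{N_{j},\lambda})\to I_{\lambda}(\mu)$. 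Combining this with \eqref{eq:leadenergasymp}, which gives $I_{\lambda}(\sigma_{N_{j},\lambda})=H_{\lambda}(\alpha_{N_{j},\lambda})/N_{j}^{2}\to I_{\lambda}(\sigma_{d})$, forces $I_{\lambda}(\mu)=I_{\lambda}(\sigma_{d})$, and the uniqueness clause of Theorem~\ref{theo:Bjorck} then yields $\mu=\sigma_{d}$. As every subsequential limit equals $\sigma_{d}$, the whole sequence converges, establishing \eqref{eq:asympdistr}. The only genuinely non-algebraic ingredients are thus the weak-$*$ continuity of $I_{\lambda}$ and, throughout, the careful exploitation of the \emph{uniqueness} of the maximal distribution to upgrade non-strict to strict conclusions.
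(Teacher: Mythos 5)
Your argument is essentially the paper's: the same three ingredients (the greedy maximum dominates the $\sigma_{d}$-average, Bj\"{o}rck's uniqueness for the upper bound, and weak-$*$ continuity of $I_{\lambda}$ plus uniqueness for the distribution), with the minor and perfectly legitimate simplification that you apply Theorem~\ref{theo:Bjorck} directly to $\sigma_{N,\lambda}$ rather than to an extremal $N$-point configuration as the paper does. The one step to be careful about is your justification of the \emph{strict} lower bound: you assert that $U_{n}$ is non-constant for every $n\geq 1$. That is true for $0<\lambda<2$, but it is not obvious (it fails for $\lambda=2$, where $U_{2k}\equiv 4k$), and the paper only establishes it \emph{after} this theorem, in Theorem~\ref{theo:asymppot}, by an argument that invokes the equidistribution \eqref{eq:asympdistr} you are in the middle of proving --- so leaning on that fact here is either an unproved claim or a circularity. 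The fix is cheap and is what the paper does: strictness of $\sum_{n=1}^{N-1}U_{n}(a_{n})>\sum_{n=1}^{N-1}\int U_{n}\,d\sigma_{d}$ already follows from the single term $n=1$, since $U_{1}(x)=|x-a_{0}|^{\lambda}$ is visibly non-constant and $\sigma_{d}$ is not a point mass. With that substitution your proof is complete and matches the paper's.
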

\begin{proof}
We follow the strategy used in the proof of Theorem 1.1 in \cite[Ch. V]{ST}. Let $U_{n}(x)$ be the function defined in \eqref{def:discrpot}. In virtue of \eqref{recurcond},
\begin{equation}\label{eq:ineqpot}
U_{n}(a_{n})\geq U_{n}(x),\qquad \mbox{for all}\,\,x\in S^{d},\quad n\geq 1.
\end{equation}
This implies
\begin{equation}\label{eq:ineqHU}
H_{\lambda}(\alpha_{N,\lambda})=2\sum_{0\leq k<i\leq N-1}|a_{i}-a_{k}|^{\lambda}=2\sum_{i=1}^{N-1}U_{i}(a_{i})\geq 2\sum_{i=1}^{N-1}U_{i}(x),\qquad x\in S^{d}.
\end{equation}
In fact, since $U_{1}(x)<U_{1}(a_{1})$ for all $x\in S^{d}\setminus\{a_{1}\}$, we have
\[
\sum_{i=1}^{N-1}U_{i}(a_{i})>\sum_{i=1}^{N-1}U_{i}(x),\qquad x\in S^{d}\setminus\{a_{1}\}.
\]
Integrating both sides of \eqref{eq:ineqHU} with respect to $\sigma_{d}$, we get
\begin{align}
H_{\lambda}(\alpha_{N,\lambda}) & > 2\sum_{i=1}^{N-1}\int U_{i}(x)\,d\sigma_{d}(x)\notag\\
& = 2\sum_{i=1}^{N-1}\sum_{k=0}^{i-1}\int |x-a_{k}|^{\lambda}\,d\sigma_{d}(x)\notag\\
& =2\sum_{i=1}^{N-1}\sum_{k=0}^{i-1}U^{\sigma_{d}}(a_{k})\label{eq:ineq1}
\end{align}
where
\[
U^{\sigma_{d}}(y):=\int |x-y|^{\lambda}\,d\sigma_{d}(x)
\]
is the potential of $\sigma_{d}$. By symmetry, 
\begin{equation}\label{eq:potconst}
U^{\sigma_{d}}(y)=I_{\lambda}(\sigma_{d})\quad\mbox{for all}\,\,y\in S^{d}. 
\end{equation}
This and \eqref{eq:ineq1} imply
\[
H_{\lambda}(\alpha_{N,\lambda})> N (N-1) I_{\lambda}(\sigma_{d}).
\]

Let $\omega_{N,\lambda}^{*}$ be an $N$-point configuration on $S^{d}$ (with possible point repetitions) that maximizes the energy \eqref{def:energy} among all such configurations, i.e.,
\[
H_{\lambda}(\omega_{N,\lambda}^{*})=\max\{H_{\lambda}(\omega_{N}): \omega_{N}\subset S^{d},\,\,\mathrm{card}(\omega_{N})=N\,\,\textrm{counting mult.}\}.
\]
Let
\[
\tau_{N}:=\frac{1}{N}\sum_{x\in\omega_{N,\lambda}^{*}}\delta_{x}.
\]
Applying Theorem~\ref{theo:Bjorck}, we have
\begin{equation}\label{ineqalphaomega}
H_{\lambda}(\alpha_{N,\lambda})
\leq H_{\lambda}(\omega_{N,\lambda}^{*})=N^{2} I_{\lambda}(\tau_{N})< N^{2} I_{\lambda}(\sigma_{d}).
\end{equation}
This finishes the proof of \eqref{eq:boundenergy} and \eqref{eq:leadenergasymp}. 

Let $\sigma$ be a limit point of the sequence $(\sigma_{N,\lambda})_{N=1}^{\infty}$ in the weak-star topology. Let $(\sigma_{N,\lambda})_{N\in\mathcal{N}}$ be a subsequence that converges to $\sigma$. Then
\[
\lim_{N\in\mathcal{N}}\iint|x-y|^{\lambda}\,d\sigma_{N,\lambda}(x)\, d\sigma_{N,\lambda}(y)=\iint |x-y|^{\lambda}\,d\sigma(x)\,d\sigma(y).
\]
This is equivalent to $\lim_{N\in\mathcal{N}} N^{-2} H_{\lambda}(\alpha_{N,\lambda})=I_{\lambda}(\sigma)$, hence $I_{\lambda}(\sigma)=I_{\lambda}(\sigma_{d})$. By the uniqueness of the maximal distribution, we get $\sigma=\sigma_{d}$. Since any limit point of the sequence $(\sigma_{N,\lambda})_{N=1}^{\infty}$ is $\sigma_{d}$, we obtain \eqref{eq:asympdistr}.
\end{proof}

\begin{remark}
In virtue of \eqref{eq:boundenergy}, the sequence $((H_{\lambda}(\alpha_{N,\lambda})-N^2\,I_{\lambda}(\sigma_{d}))/N)_{N=2}^{\infty}$ is bounded and we have
\[
-I_{\lambda}(\sigma_{d})< \frac{H_{\lambda}(\alpha_{N,\lambda})-N^{2}\,I_{\lambda}(\sigma_{d})}{N}< 0\qquad\mbox{for all}\,\,N\geq 2.
\] 
In the case of the unit circle $d=1$, our later results show that
\[
\lim_{N\rightarrow\infty}\frac{H_{\lambda}(\alpha_{N,\lambda})-N^{2}\,I_{\lambda}(\sigma_{1})}{N}=0.
\]
\end{remark}

Our next result concerns the sequence $(U_{n}(a_{n}))_{n=1}^{\infty}$.

\begin{theorem}\label{theo:asymppot}
Assume $0<\lambda<2$, let $d\geq 1$ be arbitrary, and let $(a_{n})_{n=0}^{\infty}\subset S^{d}$ be a greedy $\lambda$-energy sequence. Let $U_{n}$ be the potential \eqref{def:discrpot}. The following properties hold:
\begin{itemize}
\item[1)] For every $n\geq 1$, 
\begin{equation}\label{eq:orderdiscpot}
U_{n}(a_{n})\leq U_{n+1}(a_{n+1})\leq U_{n}(a_{n})+|a_{n+1}-a_{n}|^{\lambda}.
\end{equation}
\item[2)] For every $n\geq 1$, the function $U_{n}$ is not constant\footnotemark[4]\footnotetext[4]{See Remark~\ref{rmk:pot}.} on $S^{d}$.
\item[3)] For every $n\geq 1$ and $k\geq 0$, 
\begin{align}
n\,I_{\lambda}(\sigma_{d}) & < U_{n}(a_{n}) \leq n\,2^{\lambda}\label{ineq:boundpot}\\
U_{2k+1}(a_{2k+1}) & =U_{2k}(a_{2k})+2^{\lambda}\label{eq:equalpot}
\end{align}
understanding $U_{0}(a_{0})=0$.
\item[4)] We have
\begin{equation}\label{asympUnan}
\lim_{n\rightarrow\infty}\frac{U_{n}(a_{n})}{n}=I_{\lambda}(\sigma_{d}).
\end{equation} 
\end{itemize}
\end{theorem}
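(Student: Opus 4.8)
The plan is to establish the four items in an order that front-loads the elementary facts and isolates the one genuine difficulty, namely the non-constancy claim in item 2).

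First I would prove item 1) directly from the greedy property \eqref{recurcond}. Writing $U_{n+1}(x)=U_{n}(x)+|x-a_{n}|^{\lambda}$, the lower bound $U_{n}(a_{n})\le U_{n+1}(a_{n+1})$ follows by testing the maximizing property of $a_{n+1}$ against the competitor $x=a_{n}$, since $U_{n+1}(a_{n})=U_{n}(a_{n})$. The upper bound follows from $U_{n+1}(a_{n+1})=U_{n}(a_{n+1})+|a_{n+1}-a_{n}|^{\lambda}\le U_{n}(a_{n})+|a_{n+1}-a_{n}|^{\lambda}$, where $U_{n}(a_{n+1})\le U_{n}(a_{n})$ because $a_{n}$ maximizes $U_{n}$. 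In particular $(U_{n}(a_{n}))_{n}$ is nondecreasing, which will be the engine for item 4). I would dispatch the easy halves of item 3) here as well: the bound $U_{n}(a_{n})=\sum_{k=0}^{n-1}|a_{n}-a_{k}|^{\lambda}\le n\,2^{\lambda}$ is immediate from $|a_{n}-a_{k}|\le 2$, and the identity \eqref{eq:equalpot} follows from the symmetry property (Theorem~\ref{theo:symmprop}): since $a_{2k+1}=-a_{2k}$ we have $|a_{2k+1}-a_{2k}|=2$, and by \eqref{eq:symmpoteven}, $U_{2k}(a_{2k+1})=U_{2k}(-a_{2k})=U_{2k}(a_{2k})$, whence $U_{2k+1}(a_{2k+1})=U_{2k}(a_{2k})+2^{\lambda}$.

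The crux is item 2), and I expect it to be the main obstacle. I would argue by contradiction through potential theory. Introduce the signed measure of total mass zero $\mu_{n}:=\sum_{k=0}^{n-1}\delta_{a_{k}}-n\,\sigma_{d}$, whose $\lambda$-potential is exactly $U_{n}(x)-n\,I_{\lambda}(\sigma_{d})$ by \eqref{eq:potconst}. If $U_{n}$ were constant on $S^{d}$, then since $\int U_{n}\,d\sigma_{d}=\sum_{k=0}^{n-1}U^{\sigma_{d}}(a_{k})=n\,I_{\lambda}(\sigma_{d})$, that constant would equal $n\,I_{\lambda}(\sigma_{d})$, so the potential of $\mu_{n}$ would vanish identically. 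Integrating this potential against $\mu_{n}$ gives $\iint|x-y|^{\lambda}\,d\mu_{n}(x)\,d\mu_{n}(y)=0$. For $0<\lambda<2$ the kernel $|x-y|^{\lambda}$ is strictly negative definite on measures of total mass zero, the very property underlying the uniqueness of the maximal distribution in Theorem~\ref{theo:Bjorck}, so the vanishing of this energy forces $\mu_{n}=0$, i.e. $\frac{1}{n}\sum_{k=0}^{n-1}\delta_{a_{k}}=\sigma_{d}$. This is impossible, since a finite atomic measure cannot equal the nonatomic measure $\sigma_{d}$. A more computational alternative would be to apply the Laplace--Beltrami operator, writing $|x-a_{k}|^{\lambda}=2^{\lambda}\sin^{\lambda}(\gamma/2)$ in the geodesic angle $\gamma$ and checking that $\Delta_{S^{d}}|x-a_{k}|^{\lambda}$ is a nonzero radial function no finite sum of whose translates can vanish; I would present the negative-definiteness argument as it is cleaner.

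With item 2) in hand, the strict lower bound in \eqref{ineq:boundpot} follows: since $a_{n}$ maximizes $U_{n}$ we have $U_{n}(a_{n})\ge U_{n}(x)$ for all $x$, so $U_{n}(a_{n})\ge\int U_{n}\,d\sigma_{d}=n\,I_{\lambda}(\sigma_{d})$; equality would force $U_{n}(x)=U_{n}(a_{n})$ for $\sigma_{d}$-almost every $x$, hence, by continuity and $\supp\sigma_{d}=S^{d}$, $U_{n}$ constant, contradicting item 2). Finally, for item 4) I would combine the monotonicity from item 1) with the first-order energy asymptotics. By \eqref{eq:ineqHU}, $\sum_{i=1}^{N-1}U_{i}(a_{i})=\tfrac12 H_{\lambda}(\alpha_{N,\lambda})$, and \eqref{eq:leadenergasymp} gives $\frac{1}{N^{2}}\sum_{i=1}^{N-1}U_{i}(a_{i})\to\tfrac12 I_{\lambda}(\sigma_{d})$. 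A standard monotonicity (Abelian) argument then upgrades a $\Theta(N^{2})$ Ces\`aro sum of a nondecreasing sequence to a first-order law for the terms: comparing the windowed sum $\sum_{i=\lfloor\theta N\rfloor+1}^{N}U_{i}(a_{i})$ against $(N-\lfloor\theta N\rfloor)\,U_{\lfloor\theta N\rfloor}(a_{\lfloor\theta N\rfloor})$ and against $(N-\lfloor\theta N\rfloor)\,U_{N}(a_{N})$, and letting $\theta\to1^{-}$, yields $\lim_{n\to\infty}U_{n}(a_{n})/n=2\cdot\tfrac12 I_{\lambda}(\sigma_{d})=I_{\lambda}(\sigma_{d})$, which is \eqref{asympUnan}.
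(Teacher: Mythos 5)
Your proposal is correct, and items 1) and 3) follow the paper's own route almost verbatim (the paper obtains the upper bound $U_{n}(a_{n})\leq n\,2^{\lambda}$ by induction from \eqref{eq:orderdiscpot} together with $U_{1}(a_{1})=2^{\lambda}$ rather than directly from $|a_{n}-a_{k}|\leq 2$, but that is cosmetic, and the treatment of \eqref{eq:equalpot} via \eqref{eq:symmpoteven} is identical). Where you genuinely diverge is in items 2) and 4). For the non-constancy of $U_{n}$, the paper argues that a constant $U_{n}$ would permit the construction of a \emph{periodic} greedy sequence obtained by repeating the cycle $(a_{0},\ldots,a_{n-1})$, whose counting measures converge to a discrete measure, contradicting the equidistribution \eqref{eq:asympdistr}; you instead invoke the strict conditional negative definiteness of the kernel $|x-y|^{\lambda}$, $0<\lambda<2$, on mass-zero signed measures to conclude $\mu_{n}=0$, which is absurd for an atomic measure. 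Your argument is valid and self-contained modulo that standard positive-definiteness fact (which the paper never states explicitly, relying on Bj\"{o}rck only through the uniqueness of the maximizer), whereas the paper's argument needs no harmonic analysis but does lean on the previously proved \eqref{eq:asympdistr}. For item 4), the paper runs a contradiction argument in the style of Saff--Totik: assuming $U_{m}(a_{m})/m\geq I_{\lambda}(\sigma_{d})+\epsilon$ along a subsequence, it propagates the excess to nearby indices via the near-monotonicity \eqref{eq:iterpot} and contradicts \eqref{eq:leadenergasymp}. Your Abelian windowing argument, exploiting the exact monotonicity $U_{n}(a_{n})\leq U_{n+1}(a_{n+1})$ from item 1) together with $\sum_{i=1}^{N-1}U_{i}(a_{i})=\tfrac{1}{2}H_{\lambda}(\alpha_{N,\lambda})\sim\tfrac{1}{2}I_{\lambda}(\sigma_{d})N^{2}$, is shorter and delivers the matching $\liminf$ and $\limsup$ bounds in one stroke; the paper's version is slightly more robust in that it would survive with only the approximate monotonicity \eqref{eq:iterpot}. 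Both of your substitutions are sound.
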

\begin{proof}
Let $n\geq 1$. We have $U_{n+1}(x)=U_{n}(x)+|x-a_{n}|^{\lambda}\geq U_{n}(x)$ for all $x\in S^{d}$, hence 
\[
U_{n}(a_{n})=\max_{x\in S^{d}}\,U_{n}(x)\leq \max_{x\in S^{d}}\,U_{n+1}(x)=U_{n+1}(a_{n+1}).
\] 
Taking $x=a_{n+1}$ in \eqref{eq:ineqpot}, we get
\[
U_{n}(a_{n})\geq U_{n}(a_{n+1})=\sum_{k=0}^{n}|a_{n+1}-a_{k}|^{\lambda}-|a_{n+1}-a_{n}|^{\lambda}=U_{n+1}(a_{n+1})-|a_{n+1}-a_{n}|^{\lambda}.
\]
So \eqref{eq:orderdiscpot} is justified.

Suppose that for some $n\geq 1$, the function $U_{n}(x)=\sum_{j=0}^{n-1}|x-a_{j}|^{\lambda}$ is constant on $S^{d}$. Then, one could construct a periodic greedy $\lambda$-energy sequence with period $n$, repeating indefinitely the cycle $(a_{0},\ldots,a_{n-1})$. But this contradicts \eqref{eq:asympdistr}. 

Integrating both sides of \eqref{eq:ineqpot} with respect to $\sigma_{d}$, and using the fact that the function $U_{n}(a_{n})-U_{n}(x)\geq 0$ is not constant on $S^{d}$, we get
\[
U_{n}(a_{n})>\int U_{n}(x)\,d\sigma_{d}(x)=\sum_{k=0}^{n-1}\int |x-a_{k}|^{\lambda}\,d\sigma_{d}(x)=\sum_{k=0}^{n-1}U^{\sigma_{d}}(a_{k})=n\,I_{\lambda}(\sigma_{d}),
\]
where we applied \eqref{eq:potconst} in the last equality. This proves the first inequality in \eqref{ineq:boundpot}. Clearly, $U_{1}(a_{1})=|a_{1}-a_{0}|^{\lambda}=2^{\lambda}$. The second inequality in \eqref{ineq:boundpot} then follows from \eqref{eq:orderdiscpot} and induction.

For each $k\geq 1$, applying \eqref{eq:symmpoteven} we get
\begin{align*}
U_{2k}(a_{2k}) & =U_{2k}(-a_{2k})=U_{2k}(a_{2k+1})\\ & =U_{2k+1}(a_{2k+1})-|a_{2k+1}-a_{2k}|^{\lambda} \\
& =U_{2k+1}(a_{2k+1})-2^{\lambda}.
\end{align*}
This justifies \eqref{eq:equalpot}.

Now we prove \eqref{asympUnan}. The argument we employ follows the line of reasoning in the proof of Theorem 1.2 in \cite[Ch. V]{ST}. By \eqref{ineq:boundpot} we have $\frac{U_{n}(a_{n})}{n}>I_{\lambda}(\sigma_{d})$ for all $n\geq 1$. Let $0<\epsilon<1$, and assume that $m\geq 1$ is a fixed index for which
\begin{equation}\label{eq:assumpm}
\frac{U_{m}(a_{m})}{m}\geq I_{\lambda}(\sigma_{d})+\epsilon.
\end{equation} 
From \eqref{eq:orderdiscpot} we deduce 
\begin{equation}\label{eq:iterpot}
U_{i}(a_{i})\geq U_{i+1}(a_{i+1})-2^{\lambda},\qquad i\geq 1.
\end{equation} 
Hence,
\[
\frac{U_{m-1}(a_{m-1})}{m}\geq \frac{U_{m}(a_{m})-2^{\lambda}}{m}\geq I_{\lambda}(\sigma_{d})+\epsilon-\frac{2^{\lambda}}{m}.
\]
Similarly, a repeated application of \eqref{eq:iterpot} yields
\[
\frac{U_{i}(a_{i})}{m}\geq I_{\lambda}(\sigma_{d})+\epsilon-(m-i)\frac{2^{\lambda}}{m},\qquad 1\leq i\leq m.
\]
In particular, we have
\begin{equation}\label{ineq:lowerbound}
\frac{U_{i}(a_{i})}{m}\geq I_{\lambda}(\sigma_{d})+\frac{\epsilon}{2},\qquad m(1-\frac{\epsilon}{2^{\lambda+1}})\leq i\leq m.
\end{equation}
Let $\kappa_{m}:=m(1-2^{-\lambda-1}\epsilon)$.

We have 
\begin{align*}
2^{-1}\,H_{\lambda}(\alpha_{m+1,\lambda}) & = \sum_{i=1}^{m}U_{i}(a_{i})\\
& = \sum_{1\leq i<\kappa_{m}}U_{i}(a_{i})+\sum_{\kappa_{m}\leq i\leq m} U_{i}(a_{i})\\
& \geq \sum_{1\leq i<\kappa_{m}} i\,I_{\lambda}(\sigma_{d})+\sum_{\kappa_{m}\leq i\leq m} m(I_{\lambda}(\sigma_{d})+\frac{\epsilon}{2})
\end{align*}
where we applied the first inequality in \eqref{ineq:boundpot} and \eqref{ineq:lowerbound}. This easily implies
\begin{equation}\label{eq:resultest}
\frac{H_{\lambda}(\alpha_{m+1,\lambda})}{(m+1)^{2}} \geq I_{\lambda}(\sigma_{d})\frac{(\lfloor \kappa_{m}\rfloor-1)\lfloor\kappa_{m}\rfloor}{(m+1)^{2}}+\left(I_{\lambda}(\sigma_{d})+\frac{\epsilon}{2}\right)\,\frac{2m(m-\lfloor\kappa_{m}\rfloor)}{(m+1)^{2}}
\end{equation}
where $\lfloor\cdot\rfloor$ is the floor function. Thus, we have shown that \eqref{eq:assumpm} implies \eqref{eq:resultest}. 

Assume that there are infinitely many indices $m$ that satisfy \eqref{eq:assumpm}, and they form the subsequence $\mathcal{N}$. Then, along this subsequence, the right-hand side of \eqref{eq:resultest} approaches the value
\[
I_{\lambda}(\sigma_{d})\left(1+\frac{\epsilon^{2}}{2^{2(\lambda+1)}}\right)+\frac{\epsilon^{2}}{2^{\lambda+1}}.
\] 
However, according to \eqref{eq:leadenergasymp} the left-hand side of \eqref{eq:resultest} has limit $I_{\lambda}(\sigma_{d})$ as $m\rightarrow\infty$ along $\mathcal{N}$. This contradiction shows that \eqref{eq:assumpm} is only valid for finitely many $m$'s, and concludes the proof of \eqref{asympUnan}.

Formula \eqref{asympUnan} can also be obtained as an application of \eqref{eq:asympdistr} and Theorem 1.2 in \cite{Sim}. It also follows from Theorem 2.1 in \cite{LopSaff}.
\end{proof}

\begin{remark}\label{rmk:pot}
Property 2) in Theorem~\ref{theo:asymppot} is not valid for $\lambda=2$. Indeed, we will see that in this case $U_{2k}\equiv 4k$ on $S^{d}$ for every $k\geq 1$.
\end{remark}

\subsection{Binary representation of the energy on the unit circle}

The rest of Section~\ref{sec:firstorder} is devoted to the analysis of greedy $\lambda$-energy sequences on the unit circle $S^{1}$.

\begin{lemma}\label{lem:geomstr:1}
Assume $0<\lambda\leq 1$, and let $z_{1}$ and $z_{2}$ be two distinct points on $S^{1}$. Consider the function 
\begin{equation}\label{eq:maxfunc}
|z-z_{1}|^{\lambda}+|z-z_{2}|^{\lambda}, \quad z\in S^{1}. 
\end{equation}
Let $\gamma$ be any of the two closed arcs on $S^{1}$ that connect $z_{1}$ and $z_{2}$. On $\gamma$, the function in \eqref{eq:maxfunc} has a unique maximum, which is attained at the middle point of the arc $\gamma$. 
\end{lemma}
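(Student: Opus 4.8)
The plan is to reduce the statement to a one-variable monotonicity problem via an explicit parametrization. By the rotational invariance of the Euclidean distance, I may assume that the midpoint of the chosen arc $\gamma$ sits at $z=1$, so that the endpoints are $z_{1}=e^{-i\beta}$ and $z_{2}=e^{i\beta}$ for some $\beta\in(0,\pi)$ and $\gamma=\{e^{i\theta}:\theta\in[-\beta,\beta]\}$. Using the chord-length identity $|e^{i\theta}-e^{i\psi}|=2|\sin(\tfrac{\theta-\psi}{2})|$, and noting that $\theta\pm\beta$ ranges over $[0,2\beta]\subset[0,2\pi)$ as $\theta$ ranges over $[-\beta,\beta]$ (so each sine is nonnegative), the function in \eqref{eq:maxfunc} becomes, up to the positive constant $2^{\lambda}$,
\[
g(u)=\sin^{\lambda}(c-u)+\sin^{\lambda}(c+u),\qquad u=\tfrac{\theta}{2}\in[-c,c],\quad c=\tfrac{\beta}{2}\in\left(0,\tfrac{\pi}{2}\right).
\]
The assertion to prove is then that $g$ attains its maximum on $[-c,c]$ uniquely at $u=0$, which corresponds to the midpoint $z=1$ of $\gamma$.

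Since $g$ is manifestly even, it suffices to show that $g$ is strictly decreasing on $[0,c]$. I would differentiate on the open interval, where $\sin(c\pm u)>0$, to obtain
\[
g'(u)=\lambda\,[\,h(c+u)-h(c-u)\,],\qquad h(t):=\sin^{\lambda-1}(t)\cos(t).
\]
Because $0<c-u<c+u<2c=\beta<\pi$, the inequality $g'(u)<0$ will follow at once once I establish that $h$ is strictly decreasing on $(0,\pi)$. The heart of the matter is therefore the sign of $h'$, and a direct computation gives
\[
h'(t)=\sin^{\lambda-2}(t)\,\bigl[\lambda\cos^{2}t-1\bigr].
\]
On $(0,\pi)$ the factor $\sin^{\lambda-2}(t)$ is strictly positive, while $\lambda\cos^{2}t-1\le\lambda-1\le0$; equality in the first bound forces $\cos^{2}t=1$ (endpoints) and equality in the second forces $\lambda=1$, so $h'(t)<0$ throughout the open interval. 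This is precisely the step that uses the hypothesis $\lambda\le1$: for $\lambda>1$ the bracket can become positive and the midpoint ceases to be the maximizer, which is consistent with the transition phenomena emphasized elsewhere in the paper.

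The only delicate points are bookkeeping rather than conceptual. I must keep the argument of every sine inside $(0,\pi)$ so that $\sin^{\lambda}$ is differentiable with positive base; the factor $\sin^{\lambda-2}(t)$ does blow up as $t\to0^{+},\pi^{-}$, but this occurs only at the endpoints of $\gamma$ and does not affect interior monotonicity. I also need to track strictness through the boundary case $\lambda=1$, where $\lambda\cos^{2}t-1=-\sin^{2}t<0$ on the open interval (here one may alternatively apply a sum-to-product identity to get $g(u)=2\sin c\,\cos u$ and read off the conclusion directly). Finally, continuity of $g$ up to the endpoints upgrades the strict interior decrease on $[0,c]$, together with evenness, to a unique global maximum at $u=0$, which is the claimed midpoint of $\gamma$.
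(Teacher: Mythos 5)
Your proof is correct and follows essentially the same route as the paper's: the same symmetric parametrization reducing the claim to the sign analysis of the function $\sin^{\lambda}(c-u)+\sin^{\lambda}(c+u)$, with the decisive inequality being $\lambda\cos^{2}t-1<0$ on $(0,\pi)$ for $\lambda\leq 1$. The only (cosmetic) difference is that you establish $g'<0$ on $(0,c)$ via strict monotonicity of the auxiliary function $h(t)=\sin^{\lambda-1}(t)\cos(t)$, whereas the paper computes $g''<0$ and uses strict concavity together with evenness; both arguments rest on the identical computation $h'(t)=\sin^{\lambda-2}(t)\,(\lambda\cos^{2}t-1)$.
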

\begin{proof}
Without loss of generality, we assume that $z_{1}=\overline{z_{2}}$. Hence, $z_{1}=e^{i\phi}$ and $z_{2}=e^{-i\phi}$, where $0<\phi<\pi$. Define the function 
\[
f(\theta):=|e^{i\theta}-e^{i\phi}|^{\lambda}+|e^{i\theta}-e^{-i\phi}|^{\lambda},\qquad -\phi\leq \theta\leq \phi.
\]
We have
\[
f(\theta)=2^{\lambda}\,g(\theta)
\]
where
\[
g(\theta):=\sin^{\lambda}\left(\frac{\phi-\theta}{2}\right)+\sin^{\lambda}\left(\frac{\phi+\theta}{2}\right),\qquad -\phi\leq \theta\leq \phi.
\]
The derivative of $g$ is 
\[
g'(\theta)=\frac{\lambda}{2}\,\left(\sin^{\lambda-1}\left(\frac{\phi+\theta}{2}\right)\cos\left(\frac{\phi+\theta}{2}\right)-\sin^{\lambda-1}\left(\frac{\phi-\theta}{2}\right)\cos\left(\frac{\phi-\theta}{2}\right)\right),\]
valid for $\theta\in(-\phi,\phi)$. Then,
\[
g''(\theta)=-\frac{\lambda}{4}\left[\sin^{\lambda-2}\left(\frac{\phi+\theta}{2}\right)\left(1-\lambda \cos^{2}\left(\frac{\phi+\theta}{2}\right)\right)+\sin^{\lambda-2}\left(\frac{\phi-\theta}{2}\right)\left(1-\lambda \cos^{2}\left(\frac{\phi-\theta}{2}\right)\right)\right],
\]
also for $\theta\in(-\phi,\phi)$.

Assume that $0<\lambda\leq 1$. Since $0<\frac{\phi\pm\theta}{2}<\pi$, we have $1-\lambda\cos^{2}(\frac{\phi\pm\theta}{2})>0$. We also have $\sin^{\lambda-2}((\phi\pm\theta)/2)>0$, therefore
\[
g''(\theta)<0 \qquad\mbox{for all}\,\,\theta\in(-\phi,\phi).
\]
It follows that $g$ and $f$ attain their maximum value on the interval $-\phi\leq \theta\leq \phi$ only at $\theta=0$. 
\end{proof}

It is interesting to remark that the property stated in Lemma~\ref{lem:geomstr:1} is not valid in the range $1<\lambda<2$. However, we have the following result due to Stolarsky, see \cite[Theorem 1.2]{Stol}.

\begin{lemma}[Stolarsky \cite{Stol}]\label{lem:Stolarsky}
Let $e_{1},\ldots,e_{n}$ be $n$ equally spaced points on $S^{1}$, and let $0<\lambda<2$. The function $\sum_{i=1}^{n}|z-e_{i}|^{\lambda}$, $z\in S^{1}$, attains its maximum value only at the midpoints of the arcs between consecutive $e_{i}$. 
\end{lemma}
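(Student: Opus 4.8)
The plan is to pass to an angular parametrization and reduce everything to a one-variable trigonometric problem that is governed by the Fourier expansion of $|\sin\psi|^{\lambda}$. Writing $e_{j}=e^{2\pi ij/n}$ and $z=e^{i\theta}$, and using $|e^{i\theta}-e^{i\phi}|=2|\sin\tfrac{\theta-\phi}{2}|$, the target function becomes, up to the constant $2^{\lambda}$,
\[
h(\psi):=\sum_{j=0}^{n-1}\Bigl|\sin\bigl(\psi-\tfrac{\pi j}{n}\bigr)\Bigr|^{\lambda},\qquad \psi=\tfrac{\theta}{2}.
\]
Since the shifts $\pi j/n$ are equally spaced with gap $\pi/n$ and $|\sin|^{\lambda}$ has period $\pi$, the function $h$ has period $\pi/n$ in $\psi$; thus it suffices to locate its maxima on one period and to show that the unique maximum there is the image of an arc-midpoint.

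First I would record the Fourier expansion $|\sin\psi|^{\lambda}=a_{0}+\sum_{m\geq1}a_{m}\cos(2m\psi)$. The key structural input, extracted from the classical Gamma-function formula for $a_{m}$, is that \emph{every} coefficient $a_{m}$ with $m\geq1$ is strictly negative when $0<\lambda<2$; this is a short sign analysis of $\Gamma(1+\tfrac{\lambda}{2}-m)$. Substituting the expansion into $h$ and evaluating the character sum $\sum_{j=0}^{n-1}e^{-2\pi ikj/n}$, all harmonics with $n\nmid k$ cancel, leaving
\[
h(\psi)=n\,a_{0}-n\sum_{l\geq1}c_{l}\cos(2ln\psi),\qquad c_{l}:=|a_{ln}|>0.
\]
With $v:=2n\psi$, maximizing $h$ is equivalent to minimizing $P(v)=\sum_{l\geq1}c_{l}\cos(lv)$, and the arc-midpoints $\theta=(2j+1)\pi/n$ are exactly the points where $v\equiv\pi\pmod{2\pi}$. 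So the whole lemma reduces to the clean claim that $P$ attains its minimum on $[0,2\pi)$ only at $v=\pi$.

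To prove that claim I would show that $(c_{l})$ is a Hausdorff moment sequence. Using the reflection formula and the Beta integral, $|a_{m}|=\int_{0}^{1}t^{m}\,d\nu(t)$ with $d\nu(t)=C_{\lambda}\,t^{-\lambda/2-1}(1-t)^{\lambda}\,dt\geq0$, where $0<\lambda<2$ guarantees integrability at $t=0$. Pushing $\nu$ forward under $t\mapsto t^{n}$ gives $c_{l}=\int_{0}^{1}s^{l}\,d\tilde\nu(s)$ with $\tilde\nu\geq0$, whence
\[
P(v)=\int_{0}^{1}\Bigl(\sum_{l\geq1}s^{l}\cos(lv)\Bigr)d\tilde\nu(s)=\int_{0}^{1}\frac{s\cos v-s^{2}}{1-2s\cos v+s^{2}}\,d\tilde\nu(s).
\]
For each fixed $s\in(0,1)$ the integrand is a strictly increasing function of $\cos v$, since its derivative in $\cos v$ equals $s(1-s^{2})/(1-2s\cos v+s^{2})^{2}>0$; hence $P$ is strictly increasing in $\cos v$ on $[-1,1]$ and is minimized exactly when $\cos v=-1$, i.e.\ at $v=\pi$. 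Translating back, $h$, and hence the original function, has its unique maximum at the arc-midpoints.

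The main obstacle is the pair of structural facts about the $a_{m}$: that all of them ($m\geq1$) are negative, and that $(|a_{m}|)$ is completely monotone. Once the moment representation $|a_{m}|=\int_{0}^{1}t^{m}\,d\nu(t)$ is in hand, the rest is a uniform comparison with geometric series. I would stress that this Fourier route handles the full range $0<\lambda<2$ at once, in contrast to the second-derivative argument of Lemma~\ref{lem:geomstr:1}, which breaks down for $1<\lambda<2$. The routine points to verify are the interchange of sum and integral (absolute convergence for $s$ bounded away from $1$, and integrability of $d\tilde\nu$ against $1/(1-s)$ near $s=1$, valid since $\lambda>0$) and the bookkeeping that identifies $v\equiv\pi$ with precisely the $n$ arc-midpoints.
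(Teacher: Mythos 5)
The paper does not prove this lemma at all: it is imported verbatim from Stolarsky \cite{Stol}, so there is no internal argument to compare against. Your proof is, as far as I can check, correct and self-contained, and it follows the same Fourier-analytic road as Stolarsky's original: the reduction to $h(\psi)=\sum_j|\sin(\psi-\pi j/n)|^{\lambda}$, the expansion $|\sin\psi|^{\lambda}=a_0+\sum_{m\ge1}a_m\cos(2m\psi)$ with $a_m=\Gamma(\lambda+1)2^{1-\lambda}(-1)^m/\bigl(\Gamma(1+\tfrac{\lambda}{2}+m)\Gamma(1+\tfrac{\lambda}{2}-m)\bigr)$, the sign count showing $a_m<0$ for all $m\ge1$ precisely when $0<\lambda<2$, and the aliasing that leaves only the harmonics $\cos(2ln\psi)$. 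The one step where you add genuine value beyond ``all surviving coefficients are negative'' is the moment representation $|a_m|=\int_0^1 t^m\,d\nu(t)$ via reflection plus the Beta integral: this is really needed, since positivity of the $c_l$ alone does not force the minimum of $\sum_l c_l\cos(lv)$ to sit at $v=\pi$ (e.g.\ $\varepsilon\cos v+\cos 2v$ has its minima away from $\pi$), whereas the resulting Poisson-kernel identity
\[
\sum_{l\ge1}s^l\cos(lv)=\frac{s\cos v-s^2}{1-2s\cos v+s^2}
\]
is strictly increasing in $\cos v$ for each $s\in(0,1)$ and settles the matter. Your convergence bookkeeping is also sound: $|a_m|\asymp m^{-1-\lambda}$ gives absolute convergence of the expansion, and $\int_0^1\frac{s}{1-s}\,d\tilde\nu(s)<\infty$ (the density behaves like $(1-s)^{\lambda}$ near $s=1$ and the factor $s$ kills the non-integrable singularity of $\nu$ at $0$) justifies the interchange and the strict inequality $P(v)>P(\pi)$ for $\cos v>-1$. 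In short: a correct proof of a statement the paper only cites, covering the full range $0<\lambda<2$ where the elementary concavity argument of Lemma~\ref{lem:geomstr:1} fails.
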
  

With Lemma~\ref{lem:Stolarsky} at hand, we can give a geometric description of greedy $\lambda$-energy sequences on $S^{1}$ for $0<\lambda<2$. We remind the reader that throughout this work, we always assume that greedy $\lambda$-energy sequences on $S^{1}$ have initial point $a_{0}=1$. 

We can use the same argument employed in the proof of \cite[Theorem 5]{BiaCal} to prove our next result. We reproduce the argument for convenience of the reader. In the rest of this work, we adopt the following notation. If $\alpha=(a_{0},\ldots,a_{k})$ and $\beta=(b_{0},\ldots,b_{l})$, then $(\alpha,\beta)$ will indicate the finite sequence $(a_{0},\ldots,a_{k},b_{0},\ldots,b_{l})$.

\begin{lemma}\label{lem:geomstr:2}
Let $0<\lambda<2$, and let $(a_n)_{n=0}^{\infty}$ be a greedy $\lambda$-energy sequence on $S^{1}$. For each $N\geq 1$, let $\alpha_{N}=(a_{0},\ldots,a_{N-1})$. Then, for all $n\geq 0$, the configuration $\alpha_{2^{n}}$ consists of the $2^{n}$-th roots of unity, and we have 
\begin{equation}\label{eq:section2n1}
\alpha_{2^{n+1}}=(\alpha_{2^{n}}, \rho \beta_{2^n}), 
\end{equation}
where $\rho$ is a solution of $z^{2^{n}}=-1$, and $\beta_{2^{n}}$ is the $2^{n}$-th section of a greedy $\lambda$-energy sequence on $S^{1}$ with initial point $b_{0}=1$.
\end{lemma}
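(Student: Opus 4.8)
The plan is to argue by induction on $n$, using Stolarsky's Lemma~\ref{lem:Stolarsky} to locate each newly chosen point and the symmetry property (Theorem~\ref{theo:symmprop}) to dispose of the odd steps. Throughout, write $\mu_m$ for the set of $m$-th roots of unity. The base cases $n=0,1$ are immediate: $\alpha_1=(1)=\mu_1$, and $\alpha_2=(1,-1)=\mu_2$ by Theorem~\ref{theo:symmprop}, while Stolarsky identifies the maximizers of the respective potentials with the corresponding midpoint cosets. For the inductive step I would assume that the first $2^n$ points of \emph{every} greedy $\lambda$-sequence starting at $1$ are exactly the distinct $2^n$-th roots of unity. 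Then for our sequence $\alpha_{2^n}=\mu_{2^n}$, so $U_{2^n}(x)=\sum_{\zeta\in\mu_{2^n}}|x-\zeta|^\lambda$ is the potential of $2^n$ equally spaced points; by Lemma~\ref{lem:Stolarsky} its maximizers are precisely the $2^n$ arc-midpoints, which form the coset $M$ of primitive $2^{n+1}$-th roots, and hence $a_{2^n}\in M$. Put $\rho:=a_{2^n}$, so that $\rho^{2^n}=-1$, and set $b_j:=\rho^{-1}a_{2^n+j}$ for $0\le j\le 2^n-1$, noting $b_0=1$.

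The heart of the argument is a reduction. Since $|\rho|=1$, the substitution $x=\rho y$ gives $U_{2^n+j}(\rho y)=W(y)+S_j(y)$, where $W(y):=U_{2^n}(\rho y)=\sum_{\zeta\in\mu_{2^n}}|y-\rho^{-1}\zeta|^\lambda$ and $S_j(y):=\sum_{i=0}^{j-1}|y-b_i|^\lambda$. Because $\rho^{-1}\mu_{2^n}=M$ is again a set of $2^n$ equally spaced points, Lemma~\ref{lem:Stolarsky} shows that $W$ attains its maximum, with the same value at each point, exactly on the arc-midpoints of $M$, which are $\mu_{2^n}$ itself. The decisive claim is that $\arg\max S_j\subseteq\mu_{2^n}$ for every $j$. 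Granting this, the inequalities $W\le\max W$ (with equality only on $\mu_{2^n}$) and $S_j\le\max S_j$ (with equality only on $\arg\max S_j\subseteq\mu_{2^n}$) are simultaneously saturated exactly on $\arg\max S_j$, whence $\arg\max(W+S_j)=\arg\max S_j$. Thus maximizing $U_{2^n+j}(\rho y)$ is equivalent to maximizing $S_j$, so $b_j$ satisfies precisely the greedy condition for the section $(b_0,\dots,b_j)$ and lies in $\mu_{2^n}$.

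Running $j$ from $0$ to $2^n-1$ then exhibits $(b_0,\dots,b_{2^n-1})$ as a greedy $\lambda$-section starting at $1$ whose entries are the distinct elements of $\mu_{2^n}$; consequently the new points $a_{2^n+j}=\rho b_j$ fill out $\rho\mu_{2^n}=M$, and $\alpha_{2^{n+1}}=\mu_{2^n}\sqcup M=\mu_{2^{n+1}}$. This delivers simultaneously the root-of-unity structure and the decomposition \eqref{eq:section2n1}, with $\rho$ a solution of $z^{2^n}=-1$ and $\beta_{2^n}=(b_0,\dots,b_{2^n-1})$ the $2^n$-th section of a greedy $\lambda$-sequence starting at $1$ (any greedy prefix extends to a full greedy sequence).

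The main obstacle is establishing the containment $\arg\max S_j\subseteq\mu_{2^n}$ for \emph{all} $j$, not only for $j$ a power of two. I would prove it by a secondary induction on $j$ nested inside the induction on $n$: once $(b_0,\dots,b_{j-1})$ is known to be a greedy section, applying the induction hypothesis at levels $m<n$ to the $b$-sequence describes it as $\mu_{2^m}$ together with a partial midpoint batch, and the very same reduction carried out one level down identifies $\arg\max S_j$ with a set of midpoints lying in $\mu_{2^{m+1}}\subseteq\mu_{2^n}$. Arranging this nested induction so that it is not circular---invoking the level-$n$ reduction only after the relevant sub-structure of the $b$-sequence has been secured from strictly lower levels---is the delicate bookkeeping of the proof, and the symmetry relation $b_{2l+1}=-b_{2l}$ from Theorem~\ref{theo:symmprop} conveniently handles the odd steps and halves the work.
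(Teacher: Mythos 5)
Your proposal follows the same route as the paper's proof: an outer induction on $n$ whose hypothesis is quantified over \emph{all} greedy $\lambda$-energy sequences, Lemma~\ref{lem:Stolarsky} to place $a_{2^n}$ at a solution of $z^{2^n}=-1$, the rotation $b_j=\rho^{-1}a_{2^n+j}$, and an inner induction on $j$ in which $U_{2^n+j}(\rho y)$ is split as $W(y)+S_j(y)$ and the two maxima are shown to be attained simultaneously at a $2^n$-th root of unity. The only place you diverge is the step you flag as the main obstacle, and there you are making it harder than it is: locating the maximizers of $S_j$ does not require a recursive descent through levels $m<n$ with its attendant bookkeeping. Once the inner induction has established that $(b_0,\dots,b_{j-1})$ is a greedy section, \emph{any} maximizer $y^{*}$ of $S_j$ is by definition an admissible choice for the point of index $j$ in a greedy sequence extending $(b_0,\dots,b_{j-1})$; since $j\leq 2^{n}-1$, the outer induction hypothesis applied to that extended sequence places $y^{*}$ among the $2^n$-th roots of unity. (In fact only the weaker statement that some maximizer of $S_j$ lies in $\mu_{2^n}$ is needed: it already gives $\max(W+S_j)=\max W+\max S_j$, whence every maximizer of $W+S_j$ maximizes $S_j$.) This is exactly how the paper closes the argument, and with it your proof is complete; the circularity you worry about does not arise, precisely because the outer hypothesis is assumed for all greedy sequences rather than only the given one.
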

\begin{proof}
We proceed by induction. For $n=0$, both statements are certainly true since $\alpha_{2}=(1,-1)$. Now assume that the $2^n$-th section of \emph{any} greedy $\lambda$-energy sequence on $S^{1}$ consists of the $2^n$-th roots of unity, and let us prove \eqref{eq:section2n1}. This, in turn, implies that the section $\alpha_{2^{n+1}}$ consists of the $2^{n+1}$-st roots of unity, which completes the induction argument. 

Set $m=2^n$. We want to prove that
\begin{equation}\label{eq:claimtp}
a_{m+k} = \rho b_k,\qquad 0\leq k\leq 2^n-1,
\end{equation}
where $\rho$ is a solution of $z^{2^{n}}=-1$ and $(b_{0},\ldots,b_{2^{n}-1})$ is the $2^{n}$-th section of a greedy sequence. By the induction hypothesis, $\alpha_{m}=(a_{0},\ldots,a_{m-1})$ consists of the $2^{n}$-th roots of unity. By Lemma~\ref{lem:Stolarsky}, the function  
\[
\sum_{\ell=0}^{m-1}|a_{\ell}-z|^{\lambda}
\]
is maximized precisely when $z=a_{m}$ is the midpoint of one of the arcs determined by two points in $\alpha_{m}$ lying consecutively on $S^{1}$. Therefore, the point $\rho:=a_{m}$ is a $2^{n}$-th root of $-1$. If we set $b_{0}=1$, then we have proved \eqref{eq:claimtp} for $k=0$. 

As a side remark, observe that if $0<\lambda\leq 1$, then we can apply Lemma~\ref{lem:geomstr:1} to justify that $\rho=a_{m}$ is a $2^{n}$-th root of $-1$. Indeed, if we assume, to fix notation, that $0\leq\arg(a_m)\leq 2\pi/m$, we  express the potential as follows
\begin{gather*}
\sum_{\ell=0}^{m-1} |a_{\ell}- z|^\lambda = \sum_{\ell=0}^{m-1} |e^{2\pi i \ell / m} - z|^\lambda \\
= \sum_{\ell=0}^{(m-2)/2} \left( |e^{-2 \pi i \ell / m} - z|^\lambda + |e^{2\pi i (1+\ell)/m} - z|^\lambda \right). 
\end{gather*}
By Lemma~\ref{lem:geomstr:1}, each expression $|e^{-2 \pi i \ell / m} - z |^\lambda + |e^{2\pi i (1+\ell)/m} - z|^\lambda$ has a unique maximum on the arc $\arg(z)\in[0,2\pi/m]$ at $\arg(z)=\pi/m$. Hence $\arg(a_{m})=\pi/m$ and $a_{m}=\rho$ is a $2^{n}$-th root of $-1$.  

We prove \eqref{eq:claimtp} by induction on $k$. Let $0\leq k<2^{n}-1$ be fixed. We assume that $a_{m+j}=\rho b_{j}$, $0\leq j\leq k$, where $(b_{0},\ldots,b_{k})$ is the $(k+1)$-st section of a greedy $\lambda$-energy sequence on $S^{1}$, and we prove that if $b_{k+1}$ is defined by $a_{m+k+1}=\rho b_{k+1}$, then
\begin{equation}\label{eq:tbpb}
\sum_{j=0}^{k}|b_{k+1}-b_{j}|^{\lambda}=\max_{|z|=1}\sum_{j=0}^{k}|z-b_{j}|^{\lambda}.
\end{equation}
Let $U_{n}(z)$ be the discrete potential \eqref{def:discrpot}. By definition, $a_{m+k+1}$ satisfies
\[ 
U_{m+k+1}(a_{m+k+1}) = \max_{|z|=1} U_{m+k+1}(z)
\]
and we have
\begin{align*}
U_{m+k+1}(a_{m+k+1}) & = U_{m+k+1}(\rho b_{k+1})=\max_{|z|=1} U_{m+k+1}(z)\\
& =\max_{|z|=1} U_{m+k+1}(\rho z)\\
& =\max_{|z|=1}(U_{m}(\rho z) + \sum_{j=0}^{k} |\rho z-\rho b_{j}|^\lambda)\\
& = \max_{|z|=1} (U_{m}(\rho z) + \sum_{j=0}^{k} |z-b_{j}|^{\lambda})\\
& =\max_{|z|=1} U_{m}(\rho z)+\max_{|z|=1}\sum_{j=0}^{k}|z-b_{j}|^{\lambda}
\end{align*}
where the last equality follows from the fact that both expressions $U_{m}(\rho z)$ and $\sum_{j=0}^{k}|z-b_{j}|^{\lambda}$ are maximized at a $2^{n}$-th root of unity. Therefore,
\begin{align*}
U_{m+k+1}(\rho b_{k+1}) & = U_{m}(\rho b_{k+1})+\sum_{j=0}^{k}|b_{k+1}-b_{j}|^{\lambda}\\
& =\max_{|z|=1}U_{m}(\rho z)+\max_{|z|=1}\sum_{j=0}^{k}|z-b_{j}|^{k}
\end{align*}
and \eqref{eq:tbpb} follows. This concludes the proof.\end{proof}

We deduce from Lemma~\ref{lem:geomstr:2} the following consequence. Assume $0<\lambda<2$ and $(a_{n})_{n=0}^{\infty}$ is a greedy $\lambda$-energy sequence on $S^{1}$. If $2^{n}$ is the largest power of $2$ that does not exceed $N$, then
\begin{equation}\label{eq:conseqgd}
\alpha_{N}=(\alpha_{2^{n}},\rho\beta_{N-2^{n}}),
\end{equation}
where $\rho$ satisfies $z^{2^{n}}=-1$ and $\beta_{N-2^{n}}$ is the section of order $N-2^{n}$ of a greedy $\lambda$-energy sequence on $S^{1}$.

Define
\begin{equation}\label{def:calLN}
\mathcal{L}_\lambda(N):=N\,2^\lambda \sum_{k=1}^{N-1} \left(\sin \frac{\pi k}{N}\right)^\lambda,\qquad N\geq 1.
\end{equation}
In the case $N=1$, we understand $\mathcal{L}_{\lambda}(1)=0$. The reader can check that this expression is the $\lambda$-energy of $N$ equally spaced points on $S^{1}$. In the range $0<\lambda<2$, this value is the largest $\lambda$-energy value for an $N$-point configuration on $S^{1}$, i.e.,
\begin{equation}\label{opteqspaced}
\mathcal{L}_{\lambda}(N)=\sup\{H_{\lambda}(\omega_{N}): \omega_{N}\subset S^{1}, \mbox{card}(\omega_{N})=N\},\qquad N\geq 2,
\end{equation}
see Theorem 2.3.3 and Remark 2.3.4 in \cite{BorHarSaff}. 

In our next result, we describe the $\lambda$-energy of the $N$-th section of a greedy $\lambda$-energy sequence, in terms of the binary representation of $N$.

\begin{lemma}\label{lem:energgeodesc}
Let $0<\lambda<2$, and let $(a_{n})_{n=0}^{\infty}$ be a greedy $\lambda$-energy sequence on $S^{1}$. Assume that $N\geq 2$ has the binary representation 
\begin{equation}\label{eq:Nbinrep}
N = 2^{n_1} + \cdots + 2^{n_p},\qquad  n_{1}>n_{2}>\cdots>n_{p}\geq 0. 
\end{equation}
Then, the $\lambda$-energy of $\alpha_N = (a_n)_{n=0}^{N-1}$ is given by
\begin{equation}\label{eq:energyalphaN}
H_{\lambda}(\alpha_N) = \sum_{k=1}^{p-1}\left(\sum_{j=k+1}^{p} 2^{n_j-n_k}\right)\mathcal{L}_{\lambda}(2^{n_k+1})
+ \sum_{k=1}^{p} \left(1-\sum_{j=k+1}^{p} 2^{n_j-n_k+1}\right)\mathcal{L}_\lambda(2^{n_k}),
\end{equation}
understanding $\sum_{i_1}^{i_2}$ as empty sum if $i_{2}<i_{1}$.
\end{lemma}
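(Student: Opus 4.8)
The plan is to argue by induction on $p$, the number of ones in the binary representation \eqref{eq:Nbinrep}, exploiting the recursive structure of sections furnished by Lemma~\ref{lem:geomstr:2} and its consequence \eqref{eq:conseqgd}. The base case $p=1$ is immediate: then $N=2^{n_1}$, so $\alpha_N$ is the set of $2^{n_1}$-th roots of unity and $H_{\lambda}(\alpha_N)=\mathcal{L}_{\lambda}(2^{n_1})$, while the right-hand side of \eqref{eq:energyalphaN} collapses (empty first sum, a single surviving term in the second) to the same value.

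For the inductive step, set $m=2^{n_1}$ and $M=N-m=2^{n_2}+\cdots+2^{n_p}$. By \eqref{eq:conseqgd}, $\alpha_N=(\alpha_m,\rho\beta_M)$ with $\rho^{m}=-1$ and $\beta_M=(b_0,\dots,b_{M-1})$ the $M$-th section of a greedy $\lambda$-energy sequence. Splitting the double sum \eqref{def:energy} according to which block each index falls in gives
\[
H_{\lambda}(\alpha_N)=H_{\lambda}(\alpha_m)+H_{\lambda}(\rho\beta_M)+2\sum_{\ell=0}^{m-1}\sum_{j=0}^{M-1}|\zeta_\ell-\rho b_j|^{\lambda},
\]
where $\zeta_\ell=e^{2\pi i\ell/m}$. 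Rotation invariance of the energy gives $H_{\lambda}(\rho\beta_M)=H_{\lambda}(\beta_M)$, and $H_{\lambda}(\alpha_m)=\mathcal{L}_{\lambda}(m)$.

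The heart of the argument is the evaluation of the cross term. Since $M\le 2^{n_2+1}-1$ and $n_2+1\le n_1$, Lemma~\ref{lem:geomstr:2} shows each $b_j$ is a $2^{n_2+1}$-th root of unity, hence also an $m$-th root of unity. Writing $b_j=\zeta_{\ell_j}$ and rotating by $\zeta_{\ell_j}^{-1}$, the inner sum becomes $S_m(\rho):=\sum_{\ell=0}^{m-1}|\zeta_\ell-\rho|^{\lambda}$ independently of $j$, so the cross term equals $2M\,S_m(\rho)$. To express $S_m(\rho)$ through $\mathcal{L}_{\lambda}$, I would split the $2m$-th roots of unity into the $m$-th roots $\{\zeta_\ell\}$ and the solutions of $z^m=-1$, namely $\{\rho\zeta_\ell\}$; the even–even, odd–odd, and even–odd contributions to $\mathcal{L}_{\lambda}(2m)$ yield the identity $\mathcal{L}_{\lambda}(2m)=2\mathcal{L}_{\lambda}(m)+2m\,S_m(\rho)$, whence $S_m(\rho)=\bigl(\mathcal{L}_{\lambda}(2m)-2\mathcal{L}_{\lambda}(m)\bigr)/(2m)$.

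Substituting back, and using $M/m=\sum_{j=2}^{p}2^{n_j-n_1}$, the contribution of the current block is precisely the $k=1$ summands of \eqref{eq:energyalphaN}, that is $\bigl(\sum_{j=2}^{p}2^{n_j-n_1}\bigr)\mathcal{L}_{\lambda}(2^{n_1+1})+\bigl(1-\sum_{j=2}^{p}2^{n_j-n_1+1}\bigr)\mathcal{L}_{\lambda}(2^{n_1})$. The induction hypothesis applied to $\beta_M$, whose binary exponents are $n_2>\cdots>n_p$, then supplies the remaining summands with $k=2,\dots,p$ after shifting the index. The main obstacle is the cross-term computation: one must both verify that the $b_j$ are $m$-th roots of unity—so that the inner sum is constant in $j$—and establish the even/odd splitting identity for $\mathcal{L}_{\lambda}(2m)$. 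Once these are secured, reconciling the two indexed sums is a routine bookkeeping exercise.
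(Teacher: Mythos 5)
Your proposal is correct and follows essentially the same route as the paper: induction on $p$, the block decomposition $\alpha_N=(\alpha_{2^{n_1}},\rho\beta_{N-2^{n_1}})$ from Lemma~\ref{lem:geomstr:2}, evaluation of the cross term as $M$ copies of the midpoint potential $\bigl(\mathcal{L}_{\lambda}(2m)-2\mathcal{L}_{\lambda}(m)\bigr)/(2m)$ (which the paper obtains from the identity \eqref{eq:relUnLn} rather than your even/odd splitting of the $2m$-th roots, a purely cosmetic difference), and the induction hypothesis on $\beta_M$. The only detail you gloss over is the degenerate case $M=1$, where $H_{\lambda}(\beta_M)=0$ and one should check directly (as the paper does) that the formula still holds with the convention $\mathcal{L}_{\lambda}(1)=0$.
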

\begin{proof} We use induction on $p$, the number of terms in the binary expansion of $N$. If $p=1$, then $N=2^{n}$, so by Lemma~\ref{lem:geomstr:2} we have $H_{\lambda}(\alpha_{2^{n}})=\mathcal{L}_{\lambda}(2^{n})$, hence \eqref{eq:energyalphaN} holds in this case.

For a fixed $p\geq 1$, assume that \eqref{eq:energyalphaN} is valid for every integer $N\geq 2$ with $p$ terms in its binary expansion, and let $\widetilde{N}=2^{n_{1}}+\cdots+2^{n_{p}}+2^{n_{p+1}}$, $n_{1}>\cdots>n_{p}>n_{p+1}\geq 0$. Then, $2^{n_{1}}$ is the largest power of $2$ not greater than $N$, so by \eqref{eq:conseqgd}, we have
\[
\alpha_{\widetilde{N}}=(\alpha_{2^{n_1}},\rho \beta_{\widetilde{N}-2^{n_1}})
\]
where $\rho$ satisfies $z^{2^{n_{1}}}=-1$, and $\beta_{\widetilde{N}-2^{n_1}}$ is the section of order $\widetilde{N}-2^{n_{1}}=2^{n_{2}}+\cdots+2^{n_{p+1}}$ of a greedy $\lambda$-energy sequence. Let us write $A=\alpha_{2^{n_{1}}}$, $B=\beta_{\widetilde{N}-2^{n_1}}$.

Assume first that $\mathrm{card}(B)=\widetilde{N}-2^{n_{1}}\geq 2$. Since $A$ and $B$ are disjoint, we have 
\[
H_\lambda(\alpha_{\widetilde{N}}) = H_\lambda(A) + H_\lambda(B) + 2 \sum_{y \in B} \sum_{x \in A} |x - y|^\lambda.
\]
The configuration $A$ is formed by equally spaced points, so
\[
H_{\lambda}(A)=\mathcal{L}_{\lambda}(2^{n_{1}}).
\]
We also know by Lemma~\ref{lem:geomstr:2} that every point $y \in B$ is a midpoint of an arc whose endpoints are neighboring points in $A$. Therefore, for each $y\in B$, we have
\[
\sum_{x\in A}|x-y|^{\lambda}=2^{-n_{1}-1}\mathcal{L}_{\lambda}(2^{n_{1}+1})-2^{-n_{1}}\mathcal{L}_{\lambda}(2^{n_{1}}),
\]
consequently,
\begin{align*}
2 \sum_{y \in B} \sum_{x \in A} |x - y|^\lambda
& = 2\,\mathrm{card}(B) \left( 2^{-n_{1}-1}\mathcal{L}_{\lambda}(2^{n_{1}+1})-2^{-n_{1}}\mathcal{L}_{\lambda}(2^{n_{1}})\right)\\
& = \sum_{j=2}^{p+1} 2^{n_j - n_1} \mathcal{L}_\lambda(2^{n_1+1}) - \sum_{j=2}^{p+1} 2^{n_j - n_1 + 1} \mathcal{L}_\lambda(2^{n_1}).
\end{align*}
We obtain
\begin{align*}
H_{\lambda}(\alpha_{\widetilde{N}}) = \left(\sum_{j=2}^{p+1} 2^{n_j - n_1}\right) \mathcal{L}_{\lambda}(2^{n_1+1})
+ \left(1 - \sum_{j=2}^{p+1} 2^{n_j - n_1 + 1}\right) \mathcal{L}_{\lambda}(2^{n_1}) + H_{\lambda}(B).
\end{align*}
By the induction hypothesis, the energy of $B$ is given by
\begin{align*}
H_{\lambda}(B) = \sum_{k=2}^{p}\left(\sum_{j=k+1}^{p+1} 2^{n_j-n_k}\right)\mathcal{L}_{\lambda}(2^{n_k+1})
+ \sum_{k=2}^{p+1}\left(1 - \sum_{j=k+1}^{p+1} 2^{n_j-n_k+1}\right)\mathcal{L}_{\lambda}(2^{n_k}).
\end{align*}
From the previous two expressions we deduce 
\[
H_{\lambda}(\alpha_{\widetilde{N}})=\sum_{k=1}^{p}\left(\sum_{j=k+1}^{p+1} 2^{n_j-n_k}\right)\mathcal{L}_{\lambda}(2^{n_k+1})
+ \sum_{k=1}^{p+1}\left(1 - \sum_{j=k+1}^{p+1} 2^{n_j-n_k+1}\right)\mathcal{L}_{\lambda}(2^{n_k}),
\]
which proves \eqref{eq:energyalphaN} for $\widetilde{N}$, in the case $\mathrm{card}(B)=\widetilde{N}-2^{n_{1}}\geq 2$. If $\mathrm{card}(B)=1$, then $\widetilde{N}=2^{n_{1}}+1$, $n_{1}\geq 1$, so
\begin{align*}
H_{\lambda}(\alpha_{\widetilde{N}}) & =H_{\lambda}(A)+2\sum_{x\in A}|x-\rho|^{\lambda}\\
& =2^{-n_{1}}\mathcal{L}_{\lambda}(2^{n_1+1})+(1-2^{-n_{1}+1}) \mathcal{L}_{\lambda}(2^{n_1})
\end{align*}
which coincides with \eqref{eq:energyalphaN}.
\end{proof}

We define now the quantities
\begin{equation}\label{def:UcalN}
\mathcal{U}_{\lambda}(N):=\sum_{k=0}^{N-1}\left|e^{i\pi/N} - e^{2\pi i k/N}\right|^{\lambda},\quad N\geq 1.
\end{equation}
This represents the discrete potential of $N$ equally spaced points evaluated at the midpoint of one of the arcs between two adjacent points. If $e_{1},\ldots,e_{N}$ are $N$ equally spaced points on the unit circle, then
\[
\sum_{i=1}^{N-1}|e_{i}-e_{N}|^{\lambda}=\frac{\mathcal{L}_{\lambda}(N)}{N}.
\]
From this relation it is easy to deduce the identity
\begin{equation}\label{eq:relUnLn}
\mathcal{U}_{\lambda}(N)=\frac{\mathcal{L}_{\lambda}(2N)}{2N}-\frac{\mathcal{L}_{\lambda}(N)}{N}.
\end{equation}

\begin{lemma}\label{lem:discrpotgeomdesc}
Let $0<\lambda<2$, and let $(a_{n})_{n=0}^{\infty}$ be a greedy $\lambda$-energy sequence on $S^{1}$. If $N\geq 1$ has the binary representation \eqref{eq:Nbinrep}, then the function $U_{N}(x)$ \eqref{def:discrpot} satisfies 
\begin{equation}\label{eq:descUnan}
U_{N}(a_{N})=\sum_{k=1}^{p}\mathcal{U}_{\lambda}(2^{n_{k}}).
\end{equation}
\end{lemma}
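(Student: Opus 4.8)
The plan is to induct on $p$, the number of terms in the binary expansion \eqref{eq:Nbinrep} of $N$, using the recursive decomposition \eqref{eq:conseqgd}. Throughout, recall that $U_N(a_N)=\max_{z\in S^1}U_N(z)$. For the base case $p=1$ we have $N=2^{n_1}$, so by Lemma~\ref{lem:geomstr:2} the section $\alpha_N$ consists of the $2^{n_1}$-th roots of unity; Lemma~\ref{lem:Stolarsky} then shows that $U_N$ is maximized at the midpoint of an arc between two consecutive roots of unity, and this maximal value is $\mathcal{U}_\lambda(2^{n_1})$ by the definition \eqref{def:UcalN}. Hence \eqref{eq:descUnan} holds.

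For the inductive step, let $m=2^{n_1}$ be the largest power of $2$ not exceeding $N$, and set $M=N-m=2^{n_2}+\cdots+2^{n_p}$, which has $p-1$ binary digits. By \eqref{eq:conseqgd} we may write $\alpha_N=(\alpha_m,\rho\beta_M)$, where $\rho^{m}=-1$, the configuration $\alpha_m$ is the set of $m$-th roots of unity, and $\beta_M=(b_0,\ldots,b_{M-1})$ is the $M$-th section of a greedy $\lambda$-energy sequence. Accordingly I would split the potential as
\[
U_N(z)=U_m(z)+\sum_{j=0}^{M-1}|z-\rho b_j|^{\lambda},
\]
where $U_m$ is the potential of the $m$-th roots of unity. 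The whole strategy rests on showing that both summands attain their maximum over $S^1$ at one common point, for then the maximum of $U_N$ equals the sum of the two separate maxima.

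The first summand $U_m$ is, by Lemma~\ref{lem:Stolarsky}, maximized exactly at the $m$-th roots of $-1$ (the arc midpoints), with maximal value $\mathcal{U}_\lambda(m)$ in view of \eqref{def:UcalN}. The second summand is the potential of the rotated configuration $\rho\beta_M$; its maximizers are $\rho$ times the maximizers of $\sum_{j=0}^{M-1}|w-b_j|^{\lambda}$, and one of these is the next greedy point $b_M$, at which the value equals $\sum_{j=0}^{M-1}|b_M-b_j|^{\lambda}$. The key point, and the step I expect to demand the most care, is to locate a common maximizer. Here I would use that $M\le m-1$, so that $b_M$ is among the first $m$ points $b_0,\ldots,b_{m-1}$ of the sequence $\beta$; by Lemma~\ref{lem:geomstr:2} these are precisely the $m$-th roots of unity, whence $b_M$ is an $m$-th root of unity and $\rho b_M$ is an $m$-th root of $-1$. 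Thus $\rho b_M$ simultaneously maximizes $U_m$ (being a root of $-1$) and the potential of $\rho\beta_M$.

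Evaluating $U_N$ at this common maximizer $z=\rho b_M$ then gives
\[
U_N(a_N)=\mathcal{U}_\lambda(m)+\sum_{j=0}^{M-1}|b_M-b_j|^{\lambda}.
\]
The remaining sum is exactly the maximal value of the discrete potential \eqref{def:discrpot} of the greedy sequence $\beta$ at its $M$-th step, so the induction hypothesis, applied to $\beta$ and $M=2^{n_2}+\cdots+2^{n_p}$, identifies it with $\sum_{k=2}^{p}\mathcal{U}_\lambda(2^{n_k})$. Combining the two displays yields $U_N(a_N)=\sum_{k=1}^{p}\mathcal{U}_\lambda(2^{n_k})$, which is \eqref{eq:descUnan} and closes the induction. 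I should also check the degenerate subcase $M=1$ (i.e.\ $n_p=0$) separately, but there $b_M=b_1=-1$ is again an $m$-th root of unity since $m\geq 2$, so the same argument applies.
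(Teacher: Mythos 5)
Your proof is correct and follows essentially the same route as the paper's: induction on the number $p$ of binary digits, the decomposition $\alpha_{N}=(\alpha_{2^{n_{1}}},\rho\,\beta_{N-2^{n_{1}}})$ from \eqref{eq:conseqgd}, and the observation that the relevant maximizer is a $2^{n_{1}}$-th root of $-1$, so that the potential splits as $\mathcal{U}_{\lambda}(2^{n_{1}})+U_{M}(b_{M})$ with the induction hypothesis finishing the job. The only cosmetic difference is that the paper reads off $a_{N}=\rho\, b_{M}$ directly from the structure established in Lemma~\ref{lem:geomstr:2} and evaluates there, whereas you re-derive the splitting of the maximum via the common-maximizer argument; both are sound.
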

\begin{proof}
The proof is also by induction on $p$. If $p=1$ and $N=2^{n}$, then by Lemma~\ref{lem:geomstr:2} the points in $\alpha_{N}=(a_{0},\ldots,a_{N-1})$ are the $N$-th roots of unity. The point $a_{N}$ is the midpoint of one of the arcs between adjacent points in $\alpha_{N}$, hence $U_{N}(a_{N})=\mathcal{U}_{\lambda}(N)$.

For a fixed $p\geq 1$, assume as induction hypothesis that \eqref{eq:descUnan} is valid for every greedy sequence and every $N\geq 1$ with binary representation of length $p$. Let $\widetilde{N}=2^{n_{1}}+\cdots+2^{n_{p}}+2^{n_{p+1}}$, $n_{1}>\cdots>n_{p}>n_{p+1}\geq 0$. Then, as in the proof of Lemma~\ref{lem:energgeodesc}, we can write
\[
\alpha_{\widetilde{N}}=(\alpha_{2^{n_{1}}},\rho \beta_{\widetilde{N}-2^{n_1}}),
\]
where $\rho$ satisfies $z^{2^{n_{1}}}=-1$, and $\beta_{\widetilde{N}-2^{n_1}}$ is the section of order $\widetilde{N}-2^{n_{1}}$ of a greedy $\lambda$-energy sequence $(b_{k})_{k=0}^{\infty}$. So $a_{k}=\rho\,b_{k-2^{n_1}}$, $2^{n_{1}}\leq k\leq \widetilde{N}$, where $(b_{0},\ldots,b_{\widetilde{N}-2^{n_1}})=\beta_{\widetilde{N}-2^{n_1}+1}$. We have
\[
U_{\widetilde{N}}(a_{\widetilde{N}})=U_{2^{n_1}}(a_{\widetilde{N}})+\sum_{k=2^{n_1}}^{\widetilde{N}-1}|a_{k}-a_{\widetilde{N}}|^{\lambda}.
\]
The point $a_{\widetilde{N}}$ is the midpoint of one of the arcs between two adjacent points in $\alpha_{2^{n_1}}$, hence 
\[
U_{2^{n_1}}(a_{\widetilde{N}})=\mathcal{U}_{\lambda}(2^{n_1}).
\] 
We also have
\[
\sum_{k=2^{n_1}}^{\widetilde{N}-1}|a_{k}-a_{\widetilde{N}}|^{\lambda}=
\sum_{k=2^{n_1}}^{\widetilde{N}-1}|\rho\, b_{k-2^{n_1}}-\rho\, b_{\widetilde{N}-2^{n_1}}|^{\lambda}=\sum_{k=0}^{\widetilde{N}-2^{n_1}-1}|b_{k}-b_{\widetilde{N}-2^{n_1}}|^{\lambda}=U_{M}(b_{M}),
\]
where $M:=\widetilde{N}-2^{n_{1}}$. The number $M=2^{n_{2}}+\cdots+2^{n_{p+1}}$ has a binary representation of length $p$, so by induction hypothesis we obtain
\[
U_{M}(b_{M})=\sum_{k=2}^{p+1}\mathcal{U}_{\lambda}(2^{n_{k}}).
\]
In conclusion, $U_{\widetilde{N}}(a_{\widetilde{N}})=\sum_{k=1}^{p+1}\mathcal{U}_{\lambda}(2^{n_k})$, which finishes the proof of \eqref{eq:descUnan}.
\end{proof}

We can write a more convenient expression for $\mathcal{U}_{\lambda}(2^{n})$. From \eqref{eq:relUnLn} and \eqref{def:calLN} we get
\[
\mathcal{U}_{\lambda}(2^{n})=\frac{\mathcal{L}_{\lambda}(2^{n+1})}{2^{n+1}}-\frac{\mathcal{L}_{\lambda}(2^{n})}{2^{n}}
=2^{\lambda} \sum_{k=0}^{2^{n}-1} \sin^{\lambda}\left(\frac{(2k+1)\pi}{2^{n+1}}\right).
\]

\subsection{Second-order asymptotics on the unit circle}

Let $\zeta(s):=\sum_{n=1}^{\infty}\frac{1}{n^{s}}$, $\mathrm{Re}(s)>1$, be the classical Riemann zeta function. We also use $\zeta(s)$ to denote the meromorphic continuation of this function to the entire complex plane. Recall that this function has a simple pole at $s=1$, it is analytic on $\mathbb{C}\setminus\{1\}$, takes negative values on the interval $(-2,1)$, and it has trivial zeros at the points $s=-2n$, $n\geq 1$.

Recall the expression
\[
I_{\lambda}(\sigma_{1})=\frac{2^{\lambda}\,\Gamma((1+\lambda)/2)}{\sqrt{\pi}\,\Gamma(1+\lambda/2)}.
\]  

\begin{lemma}\label{lem:asympUcal}
Let $0<\lambda<2$. For the sequence \eqref{def:UcalN} we have
\begin{align}
\lim_{N\rightarrow\infty}\frac{\mathcal{U}_{\lambda}(N)}{N} & =I_{\lambda}(\sigma_{1}),\label{eq:asympUcalN:1}\\
\lim_{N\rightarrow\infty}\frac{\mathcal{U}_{\lambda}(N)-N I_{\lambda}(\sigma_{1})}{N^{-\lambda}} & =(2^{-\lambda}-1)\,(2\pi)^{\lambda}\,2\zeta(-\lambda).\label{eq:asympUcalN:2}
\end{align}
\end{lemma}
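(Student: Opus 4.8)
The plan is to use the closed form $\mathcal{U}_{\lambda}(N)=2^{\lambda}\sum_{k=0}^{N-1}\sin^{\lambda}\!\big(\tfrac{(2k+1)\pi}{2N}\big)$ derived just above the lemma, and to read this sum as a midpoint quadrature of $h(x):=\sin^{\lambda}(\pi x)$ on $[0,1]$, sampled at the midpoints $x_{k}=\frac{2k+1}{2N}$ of the subintervals $[\frac{k}{N},\frac{k+1}{N}]$. For \eqref{eq:asympUcalN:1} this is immediate: since $h$ is bounded and continuous, the midpoint sums converge to $\int_{0}^{1}h$, and the substitution $u=\pi x$ together with the Beta integral gives $\int_{0}^{1}\sin^{\lambda}(\pi x)\,dx=\frac{1}{\pi}B(\frac{\lambda+1}{2},\frac12)=\frac{\Gamma((\lambda+1)/2)}{\sqrt{\pi}\,\Gamma(1+\lambda/2)}$, whence $\mathcal{U}_{\lambda}(N)/N\to 2^{\lambda}\int_{0}^{1}h=I_{\lambda}(\sigma_{1})$.

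For the second-order statement \eqref{eq:asympUcalN:2} I would compute the error of this midpoint rule. The integrand $h$ is smooth on the open interval but vanishes to the non-integer order $\lambda$ at both endpoints, with $h(x)=\pi^{\lambda}x^{\lambda}\psi(x)$ near $0$ and, by the symmetry $h(x)=h(1-x)$, an identical behaviour near $1$, where $\psi(t)=\big(\sin(\pi t)/(\pi t)\big)^{\lambda}$ is smooth and even with $\psi(0)=1$. The contribution of each endpoint singularity is dictated by the pure-power model, which I would evaluate explicitly: writing $\sum_{k=0}^{N-1}(2k+1)^{\lambda}=\sum_{j=1}^{2N-1}j^{\lambda}-2^{\lambda}\sum_{j=1}^{N-1}j^{\lambda}$ and inserting the Euler–Maclaurin (Hurwitz-zeta) expansion $\sum_{j=1}^{M}j^{\lambda}=\zeta(-\lambda)+\frac{M^{\lambda+1}}{\lambda+1}+\frac{M^{\lambda}}{2}+O(M^{\lambda-1})$, the polynomial-in-$N$ parts and the $N^{\lambda}$ terms cancel, leaving
\[
\frac{1}{N}\sum_{k=0}^{N-1}\Big(\tfrac{2k+1}{2N}\Big)^{\lambda}=\frac{1}{\lambda+1}+(2^{-\lambda}-1)\,\zeta(-\lambda)\,N^{-\lambda-1}+O(N^{-2}).
\]
The factor $(2^{-\lambda}-1)$ in the target constant is produced precisely by this odd-integer (midpoint) sampling. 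Feeding the even Taylor expansion of $\psi$ into the model and using $\psi(0)=1$ (and the same at $x=1$), each endpoint contributes $\pi^{\lambda}(2^{-\lambda}-1)\zeta(-\lambda)N^{-\lambda-1}$ to the quadrature error, so that $\frac{1}{N}\sum_{k}h(x_{k})=\int_{0}^{1}h+2\pi^{\lambda}(2^{-\lambda}-1)\zeta(-\lambda)N^{-\lambda-1}+o(N^{-\lambda-1})$; multiplying by $2^{\lambda}N$ and using $2\cdot 2^{\lambda}\pi^{\lambda}=2(2\pi)^{\lambda}$ yields exactly \eqref{eq:asympUcalN:2}.

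The delicate point, and the one I would treat most carefully, is ruling out a quadrature error of order $N^{-2}$ — equivalently a term of order $N^{-1}$ in the unnormalized sum — since for $1<\lambda<2$ such a term would dominate the $N^{-\lambda}$ contribution we are after. Here the symmetry $h(x)=h(1-x)$ is decisive: in the Euler–Maclaurin description the leading integer-power correction is proportional to $h'(1)-h'(0)$, and $h'(1)=-h'(0)$ forces it to vanish. Concretely, for $1<\lambda<2$ one has $h'(0)=0$, so the would-be Bernoulli term is genuinely zero; for $0<\lambda<1$ the endpoint derivatives are singular and no integer-power term is generated at all, the entire correction being carried by the powers $N^{-\lambda-1},N^{-\lambda-3},\dots$ coming from the even Taylor coefficients of $\psi$; and $\lambda=1$ is the smooth case, where the classical term $-\tfrac{1}{12}(h'(1)-h'(0))N^{-2}$ agrees with $2\pi\,\zeta(-1)N^{-2}$ because $\zeta(-1)=-\tfrac{1}{12}$, confirming continuity of the formula across $\lambda=1$. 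To make this fully rigorous I would either invoke the singular (Navot–Lyness) Euler–Maclaurin formula, or subtract from $h$ a finite combination of the endpoint models $x^{\lambda}$, $(1-x)^{\lambda}$ and their first even corrections until the remainder is sufficiently smooth for the classical midpoint estimate, summing the subtracted models in closed form as above; the symmetry guarantees the net integer-power contributions cancel. As an independent cross-check, the same constant follows from the identity $\mathcal{U}_{\lambda}(N)=\frac{\mathcal{L}_{\lambda}(2N)}{2N}-\frac{\mathcal{L}_{\lambda}(N)}{N}$ in \eqref{eq:relUnLn} combined with the full Brauchart–Hardin–Saff expansion $\mathcal{L}_{\lambda}(N)=I_{\lambda}(\sigma_{1})N^{2}+2(2\pi)^{\lambda}\zeta(-\lambda)N^{1-\lambda}+o(N^{1-\lambda})$, the factor $(2^{-\lambda}-1)$ re-emerging from combining the $2N$ and $N$ terms.
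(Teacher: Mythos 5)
Your proposal is correct in substance, but your primary route is genuinely different from the paper's, and in fact the ``independent cross-check'' you relegate to the last sentence \emph{is} the paper's entire proof: the authors simply combine the identity $\mathcal{U}_{\lambda}(N)=\frac{\mathcal{L}_{\lambda}(2N)}{2N}-\frac{\mathcal{L}_{\lambda}(N)}{N}$ from \eqref{eq:relUnLn} with the Brauchart--Hardin--Saff expansion \eqref{eq:asympexpLN}, so that
\[
\frac{\mathcal{U}_{\lambda}(N)-N I_{\lambda}(\sigma_{1})}{N^{-\lambda}}
=2^{-\lambda}\,\frac{\mathcal{L}_{\lambda}(2N)-(2N)^2 I_{\lambda}(\sigma_{1})}{(2N)^{1-\lambda}}-\frac{\mathcal{L}_{\lambda}(N)-N^2\,I_{\lambda}(\sigma_{1})}{N^{1-\lambda}}\longrightarrow (2^{-\lambda}-1)(2\pi)^{\lambda}2\zeta(-\lambda),
\]
a two-line computation that outsources all the hard analysis to the cited result. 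Your main argument instead attacks the explicit sum $2^{\lambda}\sum_{k=0}^{N-1}\sin^{\lambda}\bigl(\tfrac{(2k+1)\pi}{2N}\bigr)$ by a singular Euler--Maclaurin (Navot--Lyness) analysis; this is self-contained and explains \emph{where} the factor $2^{-\lambda}-1$ comes from (the odd-integer sampling of the endpoint model $x^{\lambda}$, which your Hurwitz-zeta computation handles correctly), but it essentially re-derives a special case of the machinery behind \eqref{eq:asympexpLN} and is left as a sketch precisely at the delicate step, namely the rigorous reduction of $\sin^{\lambda}(\pi x)$ to the endpoint models and the exclusion of an $N^{-2}$ term in the normalized average for $1<\lambda<2$. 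Your symmetry argument for that exclusion is right once you observe $h'(0)=h'(1)=0$ for $\lambda>1$ (the earlier phrasing ``$h'(1)=-h'(0)$ forces $h'(1)-h'(0)$ to vanish'' is not what does the work), and in the $\lambda=1$ sanity check the midpoint-rule Bernoulli coefficient is $-\tfrac{1}{24}$, not $-\tfrac{1}{12}$; with $h'(1)-h'(0)=-2\pi$ this gives $\tfrac{\pi}{12}N^{-2}$, matching $2\pi(2^{-1}-1)\zeta(-1)N^{-2}$, so the consistency check does go through after fixing that factor of two. In short: your approach buys an elementary, self-contained derivation of the constant; the paper's buys brevity by leaning on \eqref{eq:relUnLn} and the known expansion of $\mathcal{L}_{\lambda}$, and since you state that argument too, your proposal does contain a complete proof.
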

\begin{proof}
The following asymptotic formula, valid for $0<\lambda<2$, follows from \cite[Theorem 1.1]{BrauHardSaff} (take $s=-\lambda$ and $p=0$ in that result):
\begin{equation}\label{eq:asympexpLN}
\mathcal{L}_{\lambda}(N)=I_{\lambda}(\sigma_{1}) N^2+ (2\pi)^{\lambda}\,2\zeta(-\lambda)\,N^{1-\lambda}+\mathcal{O}_{\lambda}(N^{-1-\lambda}),\qquad N\rightarrow\infty.
\end{equation}
Applying \eqref{eq:relUnLn} and the first two terms in \eqref{eq:asympexpLN}, we obtain \eqref{eq:asympUcalN:1}:
\begin{align*}
\lim_{N\rightarrow\infty}\frac{\mathcal{U}_{\lambda}(N)}{N}=\lim_{N\rightarrow\infty} \left(2\,\frac{\mathcal{L}_{\lambda}(2N)}{(2N)^2}-\frac{\mathcal{L}_{\lambda}(N)}{N^2}\right)=2 I_{\lambda}(\sigma_{1})-I_{\lambda}(\sigma_{1})=I_{\lambda}(\sigma_{1}).
\end{align*}
Similarly, we can write
\[
\frac{\mathcal{U}_{\lambda}(N)-N I_{\lambda}(\sigma_{1})}{N^{-\lambda}}
=2^{-\lambda}\,\frac{\mathcal{L}_{\lambda}(2N)-(2N)^2 I_{\lambda}(\sigma_{1})}{(2N)^{1-\lambda}}-\frac{\mathcal{L}_{\lambda}(N)-N^2\,I_{\lambda}(\sigma_{1})}{N^{1-\lambda}}.
\]
Letting $N\rightarrow\infty$ and using \eqref{eq:asympexpLN}, we get \eqref{eq:asympUcalN:2}.\end{proof}

\begin{theorem}\label{theo:normpot}
Let $0<\lambda<2$, and let $(a_{n})_{n=0}^{\infty}\subset S^{1}$ be a greedy $\lambda$-energy sequence. Then, the sequence $(U_{N}(a_{N})-N I_{\lambda}(\sigma_{1}))_{N=1}^{\infty}$ is bounded and divergent. For each $N\geq 1$,
\begin{equation}\label{eq:boundUnansecord}
0<U_{N}(a_{N})-N I_{\lambda}(\sigma_{1})<I_{\lambda}(\sigma_{1}).\end{equation}
These inequalities are sharp since we have
\begin{align}\liminf_{N\rightarrow\infty}\,(U_{N}(a_{N})-N I_{\lambda}(\sigma_{1})) & =0,\label{eq:secordUnanliminf}\\
\limsup_{N\rightarrow\infty}\,(U_{N}(a_{N})-N I_{\lambda}(\sigma_{1})) & =I_{\lambda}(\sigma_{1}).
\label{eq:secordUnanlimsup}
\end{align}
\end{theorem}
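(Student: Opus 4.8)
The plan is to reduce the whole statement to the binary-digit decomposition furnished by Lemma~\ref{lem:discrpotgeomdesc}, combined with a telescoping identity for the quantities $\mathcal{U}_\lambda(2^k)$. Write $N$ in binary as in \eqref{eq:Nbinrep}, let $\epsilon_k\in\{0,1\}$ denote the $k$-th binary digit of $N$ (so $N=\sum_{k\geq 0}\epsilon_k 2^k$), and set
\[
\delta_k:=\mathcal{U}_\lambda(2^k)-2^k I_\lambda(\sigma_1),\qquad k\geq 0.
\]
Since $2^k$ has a single binary digit, Lemma~\ref{lem:discrpotgeomdesc} gives $U_{2^k}(a_{2^k})=\mathcal{U}_\lambda(2^k)$, so the first inequality in \eqref{ineq:boundpot} (with $n=2^k$, $d=1$) yields $\delta_k>0$ for every $k$. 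More generally, Lemma~\ref{lem:discrpotgeomdesc} and $N=\sum_k\epsilon_k 2^k$ give the clean formula
\[
U_N(a_N)-N I_\lambda(\sigma_1)=\sum_{k\geq 0}\epsilon_k\,\delta_k,
\]
a sum of finitely many strictly positive terms whenever $N\geq 1$; the lower bound in \eqref{eq:boundUnansecord} is then immediate.

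For the upper bound I would evaluate the full series $\sum_{k\geq 0}\delta_k$ exactly. Using \eqref{eq:relUnLn} in the form $\mathcal{U}_\lambda(2^k)=\mathcal{L}_\lambda(2^{k+1})/2^{k+1}-\mathcal{L}_\lambda(2^k)/2^k$, the partial sums telescope, and since $\mathcal{L}_\lambda(1)=0$,
\[
\sum_{k=0}^{n}\mathcal{U}_\lambda(2^k)=\frac{\mathcal{L}_\lambda(2^{n+1})}{2^{n+1}},
\]
whence
\[
\sum_{k=0}^{n}\delta_k=\frac{\mathcal{L}_\lambda(2^{n+1})}{2^{n+1}}-I_\lambda(\sigma_1)\,(2^{n+1}-1).
\]
Substituting the asymptotic expansion \eqref{eq:asympexpLN} for $\mathcal{L}_\lambda(2^{n+1})$, the leading terms $I_\lambda(\sigma_1)\,2^{n+1}$ cancel and the second-order term, of order $(2^{n+1})^{-\lambda}$, vanishes as $n\to\infty$, leaving $\sum_{k\geq 0}\delta_k=I_\lambda(\sigma_1)$; the convergence of the series is guaranteed by \eqref{eq:asympUcalN:2}, which gives $\delta_k=O(2^{-k\lambda})$. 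Because any finite $N$ omits all but finitely many of the strictly positive $\delta_k$, we get $U_N(a_N)-N I_\lambda(\sigma_1)<\sum_{k\geq 0}\delta_k=I_\lambda(\sigma_1)$, the upper bound in \eqref{eq:boundUnansecord}.

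Sharpness and divergence then follow from two natural subsequences. Taking $N=2^n$ gives $U_N(a_N)-N I_\lambda(\sigma_1)=\delta_n\to 0$ by \eqref{eq:asympUcalN:2}, which together with the strict lower bound yields \eqref{eq:secordUnanliminf}. Taking $N=2^{n+1}-1$ (all digits up to $n$ equal to $1$) gives $U_N(a_N)-N I_\lambda(\sigma_1)=\sum_{k=0}^{n}\delta_k\to I_\lambda(\sigma_1)$ by the telescoped computation above, which together with the strict upper bound yields \eqref{eq:secordUnanlimsup}. Since $I_\lambda(\sigma_1)>0$, the liminf and limsup differ, so the sequence diverges.

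I expect the only genuinely delicate point to be the exact evaluation $\sum_{k\geq 0}\delta_k=I_\lambda(\sigma_1)$: it relies on the precise second-order coefficient in \eqref{eq:asympexpLN} and on the cancellation of the $2^{n+1}$ leading terms in the telescoped partial sum, so I would double-check that the constant term left after substitution is exactly $I_\lambda(\sigma_1)$ and not an artifact. I would also take care to establish the sign $\delta_k>0$ independently via \eqref{ineq:boundpot} for \emph{all} $k$, rather than inferring it from the asymptotic constant $(2^{-\lambda}-1)(2\pi)^\lambda\,2\zeta(-\lambda)$, which only controls the tail; the strictness in \eqref{eq:boundUnansecord} depends on each $\delta_k$ being positive.
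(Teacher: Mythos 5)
Your proposal is correct and follows essentially the same route as the paper's proof: the decomposition $U_N(a_N)-NI_\lambda(\sigma_1)=\sum_k\epsilon_k\delta_k$ from Lemma~\ref{lem:discrpotgeomdesc}, the positivity of each $\delta_k$ from \eqref{ineq:boundpot}, the telescoped identity $\sum_{k=0}^{n}\mathcal{U}_\lambda(2^k)=\mathcal{L}_\lambda(2^{n+1})/2^{n+1}$, and the two subsequences $N=2^n$ and $N=2^{n+1}-1$ are all exactly the ingredients the paper uses. The only cosmetic difference is that you phrase the strict upper bound via the convergent positive series $\sum_{k\geq 0}\delta_k=I_\lambda(\sigma_1)$, whereas the paper phrases it via the strict monotonicity of the partial sums along $\kappa_n=2^n-1$; these are the same argument.
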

\begin{proof}
We know from \eqref{ineq:boundpot} that $U_{N}(a_{N})-N I_{\lambda}(\sigma_{1})>0$ for all $N\geq 1$. This is the first inequality in \eqref{eq:boundUnansecord}. 

If we take $N=2^{n}$, $n\rightarrow\infty$, and apply \eqref{eq:asympUcalN:2}, we get
\[
\lim_{n\rightarrow\infty}(U_{2^{n}}(a_{2^{n}})-2^{n}\,I_{\lambda}(\sigma_{1}))=\lim_{n\rightarrow\infty}(\mathcal{U}_{\lambda}(2^{n})-2^{n}\,I_{\lambda}(\sigma_{1}))=0.
\] 
This and the first inequality in \eqref{eq:boundUnansecord} imply \eqref{eq:secordUnanliminf}. 

Consider the subsequence
\[
\kappa_{n}:=\sum_{k=0}^{n-1} 2^{k}=2^{n}-1,\qquad n\geq 1. 
\]
We claim that the sequence $(U_{\kappa_{n}}(a_{\kappa_{n}})-\kappa_{n}\,I_{\lambda}(\sigma_1))_{n=1}^{\infty}$ is strictly increasing. Indeed, according to Lemma~\ref{lem:discrpotgeomdesc}, we have
\begin{align*}
U_{\kappa_{n+1}}(a_{\kappa_{n+1}})-\kappa_{n+1}\,I_{\lambda}(\sigma_1) & =\sum_{k=0}^{n}(\mathcal{U}_{\lambda}(2^{k})-2^{k} I_{\lambda}(\sigma_{1}))\\
& =U_{\kappa_{n}}(a_{\kappa_{n}})-\kappa_{n}\,I_{\lambda}(\sigma_1)+\mathcal{U}_{\lambda}(2^{n})-2^{n} I_{\lambda}(\sigma_{1})
\end{align*}
and $\mathcal{U}_{\lambda}(2^{n})-2^{n} I_{\lambda}(\sigma_{1})=U_{2^{n}}(a_{2^{n}})-2^{n} I_{\lambda}(\sigma_{1})>0$ for each $n\geq 1$. 

Using \eqref{eq:relUnLn}, we obtain
\[
U_{\kappa_{n}}(a_{\kappa_{n}})=\sum_{k=0}^{n-1}\mathcal{U}_{\lambda}(2^{k})=\sum_{k=0}^{n-1}\left(\frac{\mathcal{L}_{\lambda}(2^{k+1})}{2^{k+1}}-\frac{\mathcal{L}_{\lambda}(2^{k})}{2^{k}}\right)=\frac{\mathcal{L}_{\lambda}(2^{n})}{2^{n}},
\]
(recall that $\mathcal{L}_{\lambda}(1)=0$) so 
\begin{align*}
U_{\kappa_{n}}(a_{\kappa_{n}})-\kappa_{n}\,I_{\lambda}(\sigma_1) & =\frac{\mathcal{L_{\lambda}}(2^{n})}{2^{n}}-2^{n}\,I_{\lambda}(\sigma_{1})+I_{\lambda}(\sigma_{1})\\
& =\frac{\mathcal{L}_{\lambda}(2^{n})-2^{2n} I_{\lambda}(\sigma_1)}{2^{n}}+I_{\lambda}(\sigma_{1}).
\end{align*}
In virtue of \eqref{eq:asympexpLN}, we have $\lim_{N\rightarrow\infty} (\mathcal{L}_{\lambda}(N)-N^2\,I_{\lambda}(\sigma_{1}))/N=0$, hence the previous identity gives
\begin{equation}\label{eq:limUkappa}
\lim_{n\rightarrow\infty}(U_{\kappa_{n}}(a_{\kappa_{n}})-\kappa_{n}\,I_{\lambda}(\sigma_{1}))=I_{\lambda}(\sigma_{1}).
\end{equation}
By monotonicity, this implies
\[
U_{\kappa_{n}}(a_{\kappa_{n}})-\kappa_{n}\,I_{\lambda}(\sigma_{1})<I_{\lambda}(\sigma_{1}),\qquad \mbox{for all}\,\,n\geq 1.
\]
If $N$ has the binary expansion \eqref{eq:Nbinrep}, applying \eqref{eq:descUnan} we obtain
\begin{align*}
U_{N}(a_{N})-N I_{\lambda}(\sigma_{1}) & =\sum_{k=1}^{p}(\mathcal{U}_{\lambda}(2^{n_{k}})-2^{n_{k}} I_{\lambda}(\sigma_1))\\ 
& \leq \sum_{j=0}^{n_{1}}(\mathcal{U}_{\lambda}(2^{j})-2^{j} I_{\lambda}(\sigma_1))\\
& =U_{\kappa_{n}}(a_{\kappa_{n}})-\kappa_{n} I_{\lambda}(\sigma_{1})\\
& <I_{\lambda}(\sigma_1),
\end{align*}
where $n=n_{1}+1$. This justifies the second inequality in \eqref{eq:boundUnansecord}.

Finally, \eqref{eq:secordUnanlimsup} follows from \eqref{eq:boundUnansecord} and \eqref{eq:limUkappa}.
\end{proof}

\begin{remark}
Interestingly, the two very close sequences $(2^{n})$ and $(2^{n}-1)$ provide the liminf and limsup of the sequence $(U_{N}(a_{N})-N I_{\lambda}(\sigma_1))$, respectively.
\end{remark}

In Figures~\ref{plotsecord5} and \ref{plotsecord6} we present plots of the sequences analyzed in Theorem~\ref{theo:normpot}, for different values of $\lambda\in(0,2)$.

\begin{figure}
\centering
\begin{subfigure}[b]{0.475\textwidth}
\centering
\includegraphics[width=\textwidth]{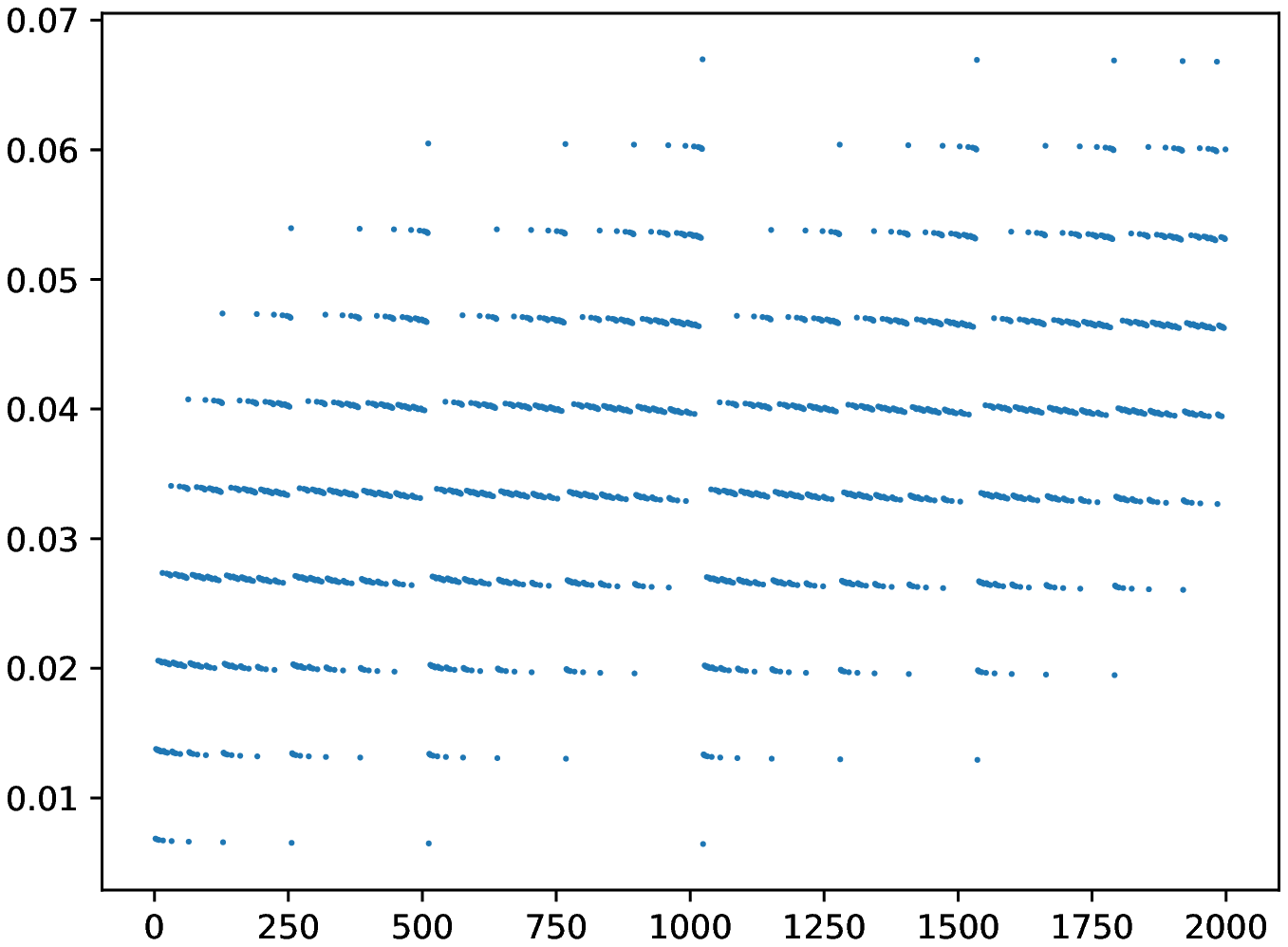}
\caption[]{{$\lambda = 0.01$}}    
\end{subfigure}
\hfill
\begin{subfigure}[b]{0.475\textwidth}  
\centering 
\includegraphics[width=\textwidth]{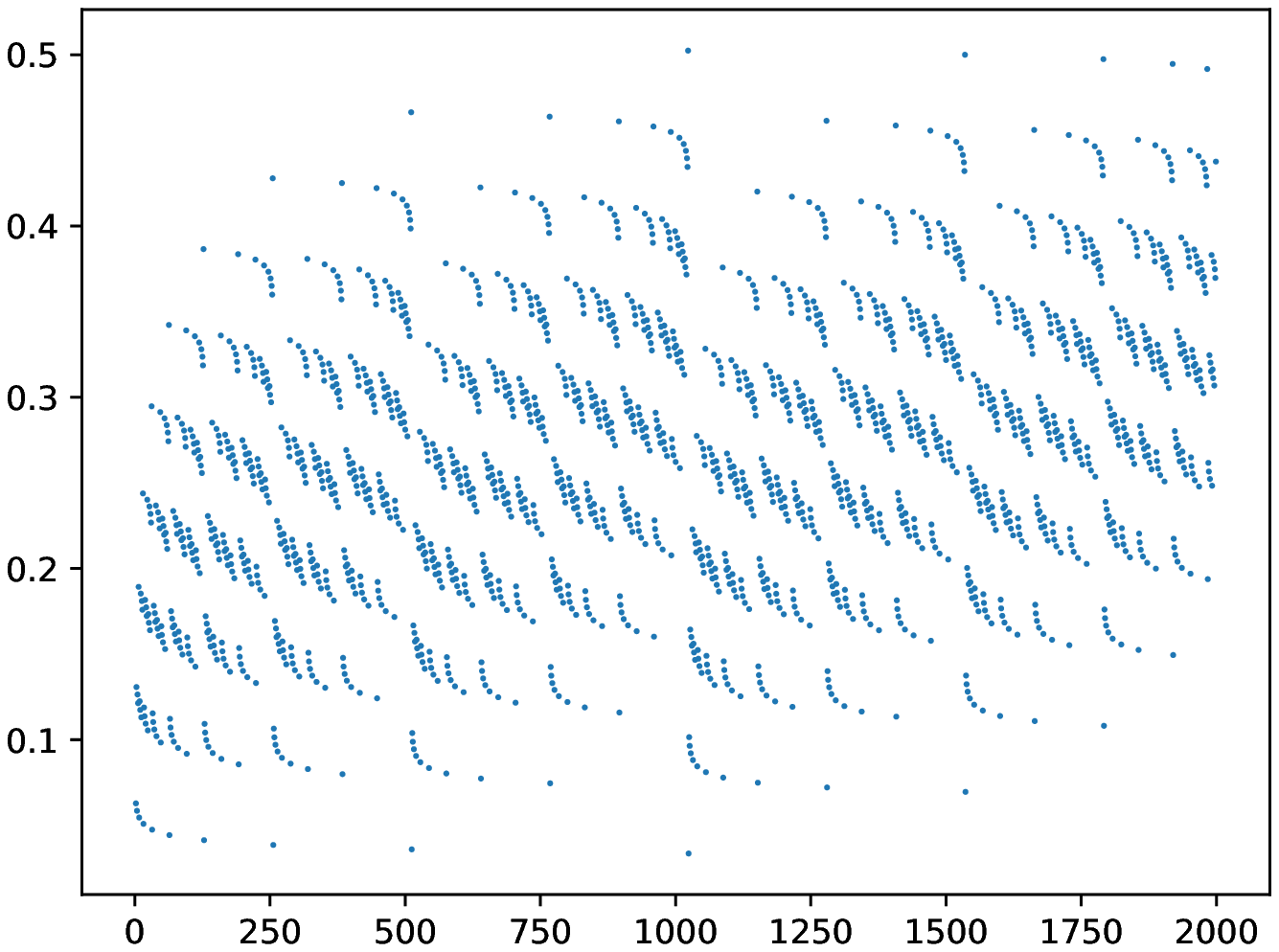}
\caption[]{{$\lambda = 0.1$}}    
\end{subfigure}
\vskip\baselineskip
\begin{subfigure}[b]{0.475\textwidth}   
\centering 
\includegraphics[width=\textwidth]{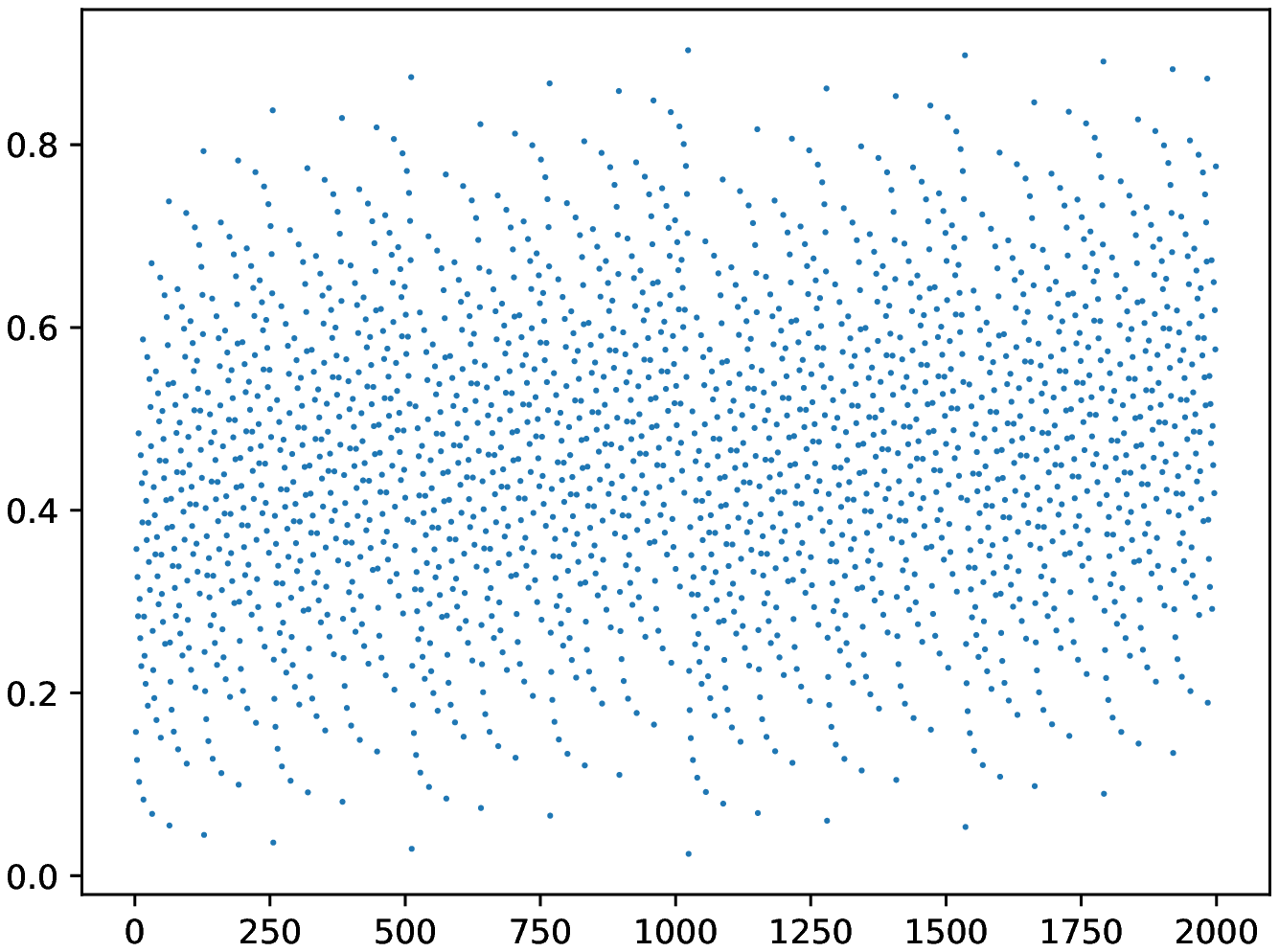}
\caption[]{{$\lambda = 0.3$}}
\end{subfigure}
\quad
\begin{subfigure}[b]{0.475\textwidth}   
\centering 
\includegraphics[width=\textwidth]{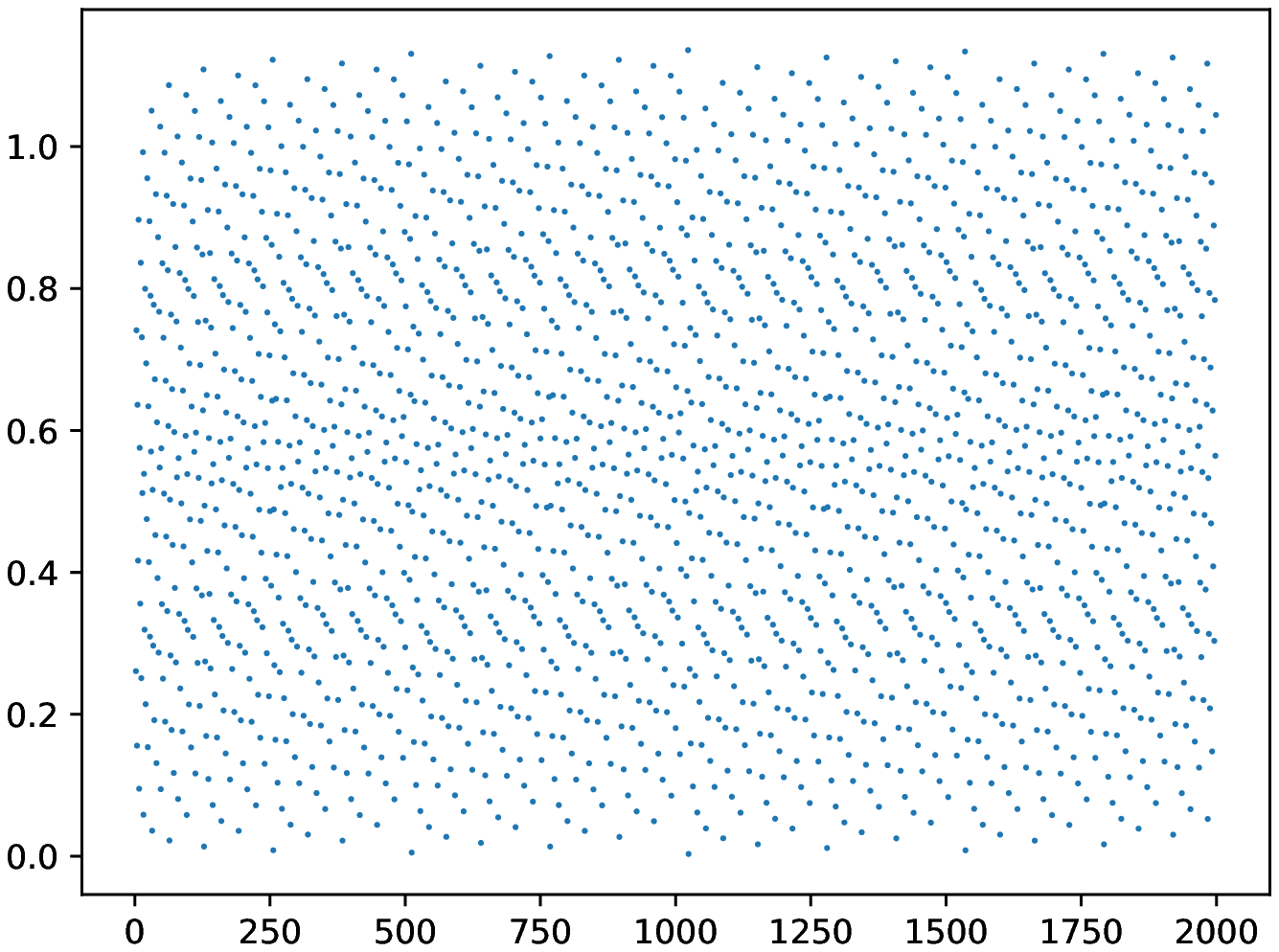}
\caption[]{{$\lambda = 0.7$}}
\end{subfigure}
\caption[]{{Plots of sequences $(U_{N}(a_{N})-N I_{\lambda}(\sigma_{1}))$ for $0<\lambda<1$ and $1\leq N\leq 2000$.}}
\label{plotsecord5}
\end{figure}

\begin{figure}
\centering
\begin{subfigure}[b]{0.475\textwidth}
\centering
\includegraphics[width=\textwidth]{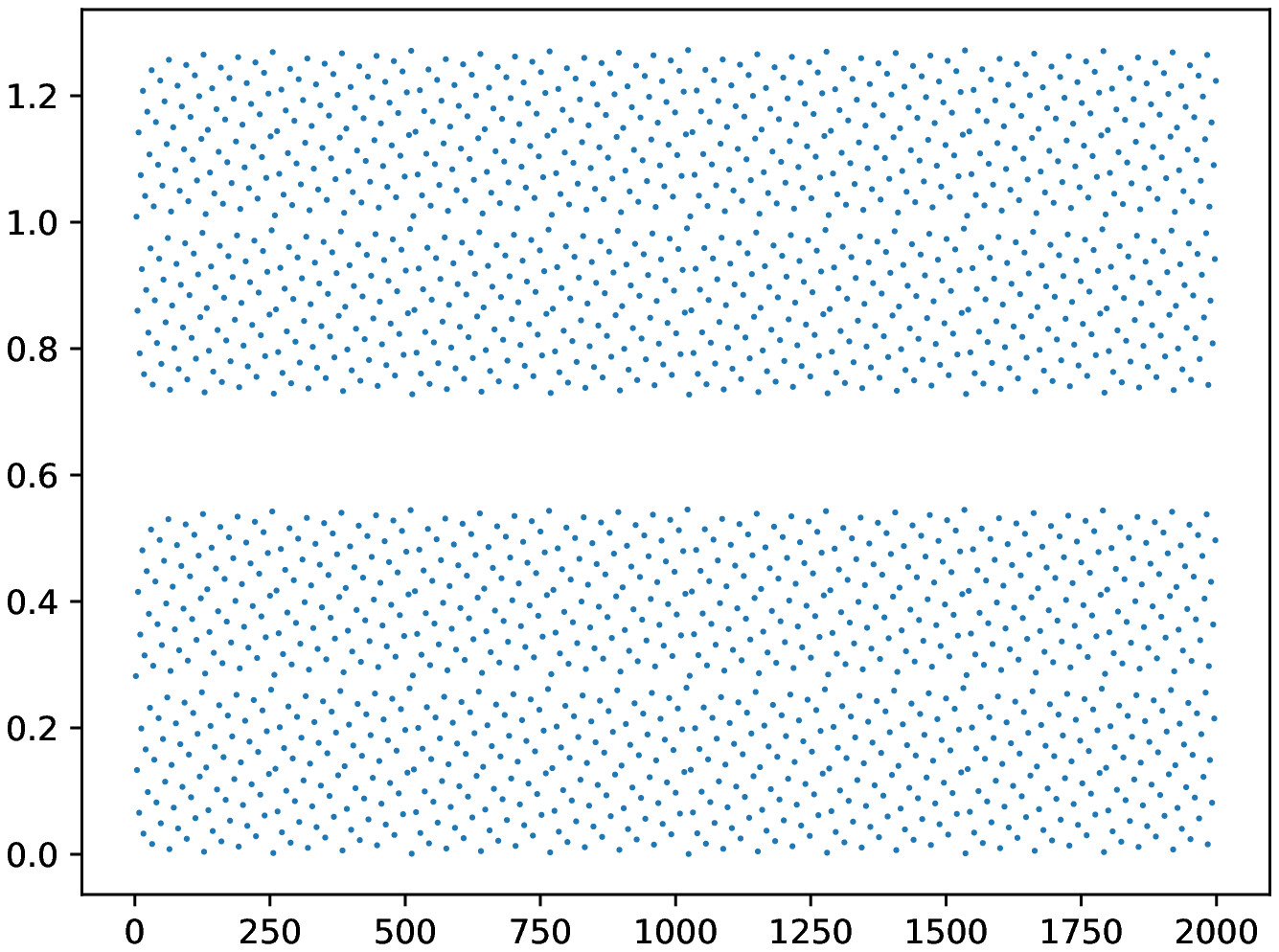}
\caption[]{{$\lambda = 1$}}
\end{subfigure}
\hfill
\begin{subfigure}[b]{0.475\textwidth}  
\centering 
\includegraphics[width=\textwidth]{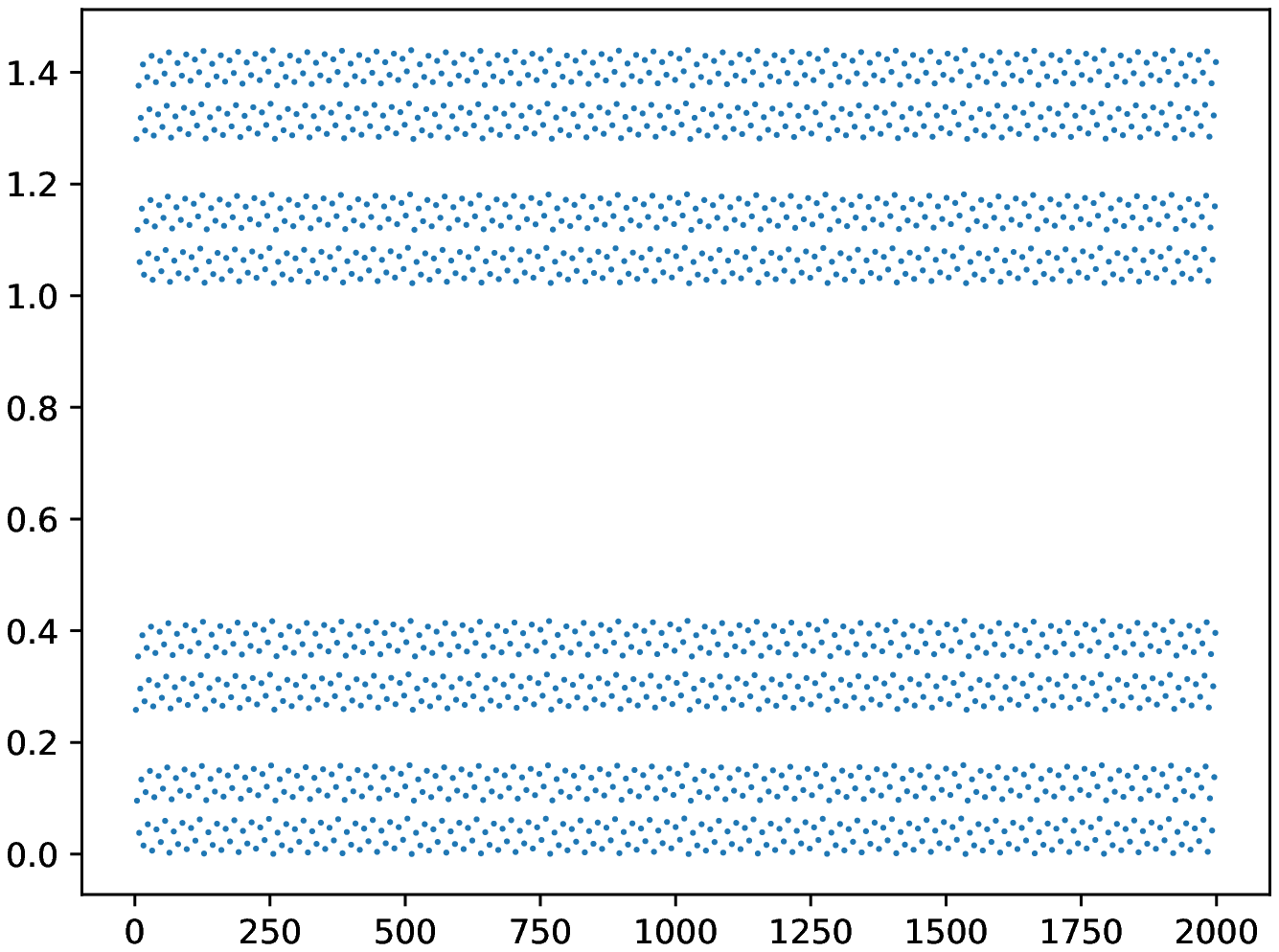}
\caption[]{{$\lambda = 1.3$}}
\end{subfigure}
\vskip\baselineskip
\begin{subfigure}[b]{0.475\textwidth}   
\centering 
\includegraphics[width=\textwidth]{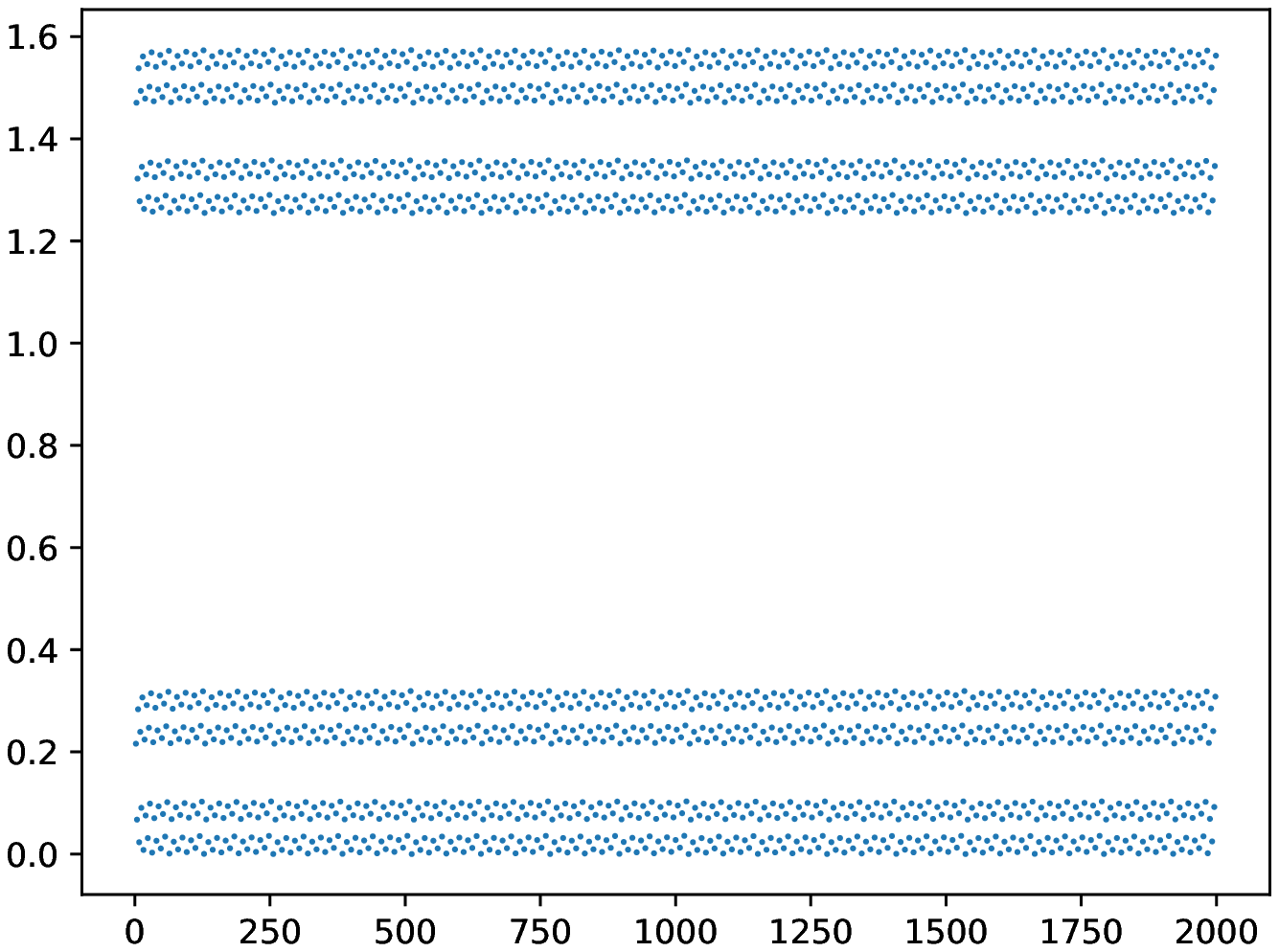}
\caption[]{{$\lambda = 1.5$}}
\end{subfigure}
\quad
\begin{subfigure}[b]{0.475\textwidth}   
\centering 
\includegraphics[width=\textwidth]{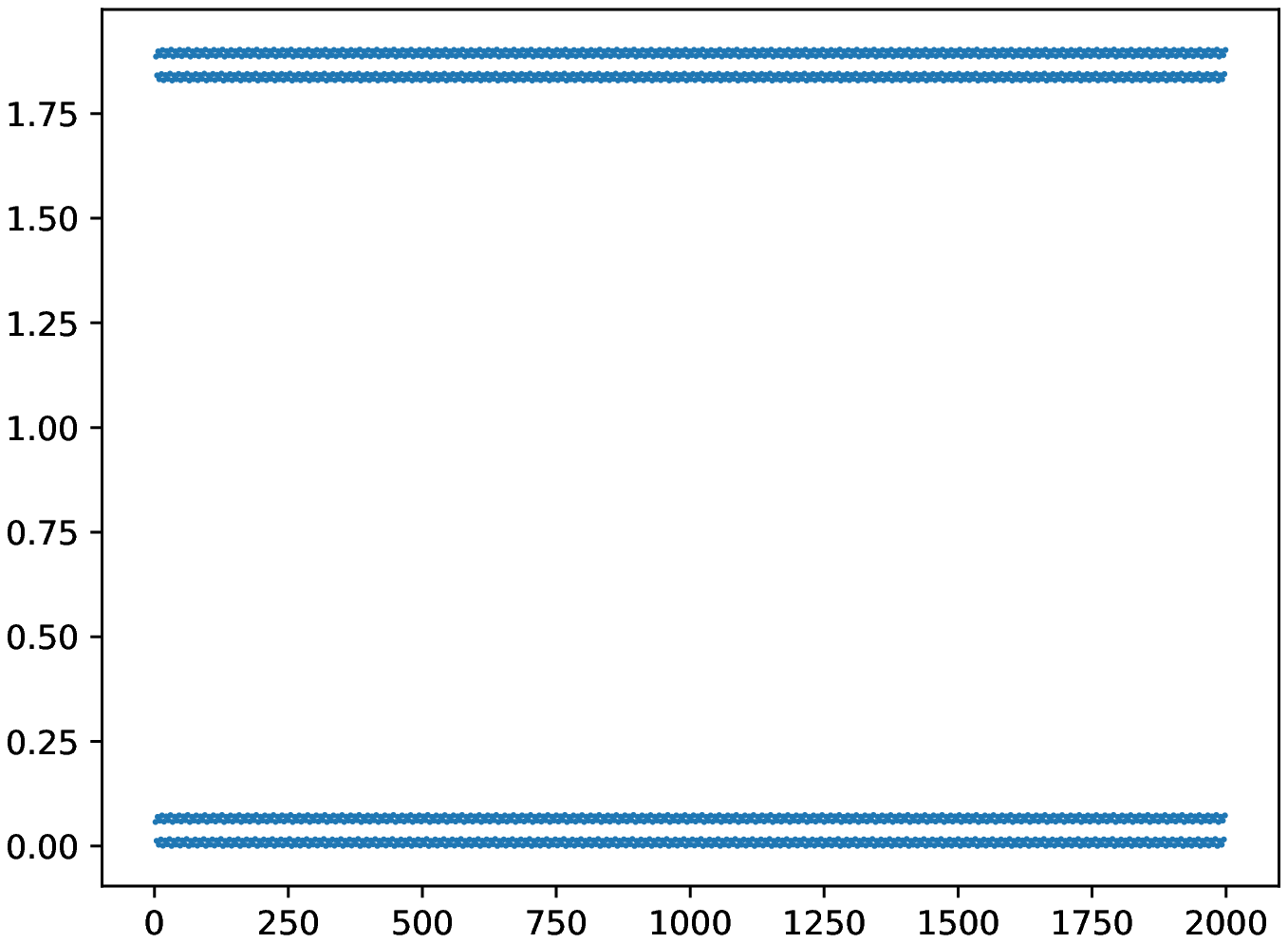}
\caption[]{{$\lambda = 1.9$}} 
\end{subfigure}
\caption[]{{Plots of sequences $(U_{N}(a_{N})-N I_{\lambda}(\sigma_{1}))$ for $1\leq\lambda<2$ and $1\leq N\leq2000$.}}
\label{plotsecord6}
\end{figure}

Our goal now is the study of the second-order asymptotics of the energy $H_{\lambda}(\alpha_{N,\lambda})$ in the range $0<\lambda<2$. We need to subdivide the analysis into three cases: $0<\lambda<1$, $\lambda=1$, and $1<\lambda<2$. This is due to the different orders of growth of the sequence $(H_{\lambda}(\alpha_{N,\lambda})-N^{2} I_{\lambda}(\sigma_{1}))$. 

Following the strategy employed in \cite{LopWag}, we introduce the following definitions. 

\begin{definition}
For a fixed $p\in\mathbb{N}$, let $\Theta_{p}$ denote the set of all vectors $\vec{\theta}=(\theta_{1},\ldots,\theta_{p})$ for which there exists an infinite sequence $\mathcal{N}$ of integers $N=2^{n_{1}}+\cdots+2^{n_{p}}$, $n_1>n_2>\cdots>n_{p}\geq 0$, satisfying
\begin{equation}\label{eq:asympcondThetap}
\lim_{N\in\mathcal{N}}\frac{2^{n_{i}}}{N}=\theta_{i},\qquad \mbox{for all}\,\,i=1,\ldots,p.
\end{equation}
\end{definition}

Note that $\Theta_{p}\subset [0,1]^{p}$. If $(\theta_{1},\ldots,\theta_{p})\in\Theta_{p}$, then 
\[
\sum_{i=1}^{p}\theta_{i}=1
\]
and $\theta_{i}\geq 2\,\theta_{i+1}$ for each $1\leq i\leq p-1$.
In particular, $\theta_{1}>0$, and if $\theta_{i}=0$ for some $2\leq i\leq p$, then $\theta_{j}=0$ for all $i\leq j\leq p$.

The asymptotic condition \eqref{eq:asympcondThetap} is the defining property of $\Theta_{p}$ that we will apply, but this set can be defined alternatively as follows. $\Theta_{p}$ consists of all vectors $\vec{\theta}=(\theta_{1},\ldots,\theta_{p})$ that can be written in the form
\[
\vec{\theta}=\left(\frac{2^{t_{1}}}{M},\frac{2^{t_{2}}}{M},\ldots,\frac{2^{t_{r-1}}}{M},\frac{1}{M},0,\ldots,0\right),
\]
where $M=2^{t_{1}}+\cdots+2^{t_{r-1}}+1$ is an odd integer with $t_{1}>t_{2}>\cdots>t_{r-1}>0$ and $1\leq r\leq p$. The number of zeros that appear in $\vec{\theta}$, if any, is $p-r$.

\begin{definition}
On $\Theta_{p}\times [0,1)$ we define the function
\begin{equation}\label{def:Gfunction}
G((\theta_{1},\ldots,\theta_{p});\lambda):=\sum_{k=1}^{p}\theta_{k}^{-\lambda}\Big(2(2^{-\lambda}-1)\left(\sum_{j=k+1}^{p}\theta_{j}\right)+\theta_{k}\Big).
\end{equation}
The inner sum is understood to be zero if $k=p$. Also, if $\theta_{k}=0$ for some $2\leq k\leq p$, then the expression $\theta_{k}^{-\lambda}\Big(2(2^{-\lambda}-1)\left(\sum_{j=k+1}^{p}\theta_{j}\right)+\theta_{k}\Big)$ is understood to be zero, i.e.,
\begin{equation}\label{eq:convention}
\theta_{k}=0\implies \theta_{k}^{-\lambda}\Big(2(2^{-\lambda}-1)\left(\sum_{j=k+1}^{p}\theta_{j}\right)+\theta_{k}\Big)=0.
\end{equation}
\end{definition}

Observe that it is natural to adopt the convention \eqref{eq:convention} because $0\leq \lambda<1$, which implies $\lim_{\theta\rightarrow 0^{+}}\theta^{1-\lambda}=0$.

Another important observation is that for any $\vec{\theta}\in\Theta_{p}$, we have $G(\vec{\theta};\lambda)>0$. Indeed, from the inequalities $1-\lambda>0$ and $\theta_{i}\geq 2\,\theta_{i+1}$, $1\leq i\leq p-1$, it easily follows that for each $k$ with $\theta_{k}>0$, we have
\[
2(2^{-\lambda}-1)\left(\sum_{j=k+1}^{p}\theta_{j}\right)+\theta_{k}>0.
\]
As a function of $\lambda$, $G(\vec{\theta};\lambda)$ is continuous, and $G(\vec{\theta};0)=\sum_{k=1}^{p}\theta_{k}=1$.

\begin{figure}[h]
\centering
\includegraphics[totalheight=2in,keepaspectratio]{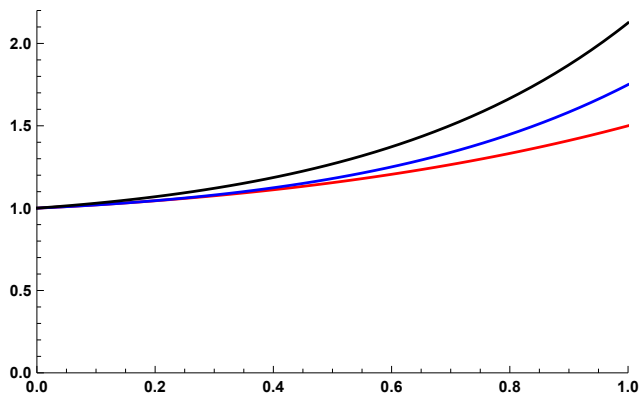}
\caption{The figure shows the graphs of the functions \eqref{def:Gfunction} associated with the vectors $\vec{\theta}=(2/3,1/3)$, $\vec{\theta}=(4/7,2/7,1/7)$, and $\vec{\theta}=(8/11, 2/11, 1/11)$, in increasing order.}
\label{fig:funcG}
\end{figure}

\begin{definition}
Let $0\leq \lambda<1$ be fixed. Using the function \eqref{def:Gfunction}, we define
\begin{align*}
\overline{g}_{p}(\lambda) & :=\sup_{\vec{\theta}\in\Theta_{p}}G(\vec{\theta};\lambda),\qquad p\in\mathbb{N},\\
\overline{g}(\lambda) & :=\sup_{p\in\mathbb{N}} \overline{g}_{p}(\lambda).
\end{align*} 
\end{definition}

Taking $(\theta_{1},\theta_{2})=(2/3,1/3)\in\Theta_{2}$, we have
\[
G((\theta_1,\theta_2);\lambda)=\frac{2^{1-2\lambda}+1}{3^{1-\lambda}}>1,\qquad 0<\lambda<1,
\]
see also Figure~\ref{fig:funcG}. Hence
\begin{equation}\label{eq:estoverg}
\overline{g}(\lambda)>1,\qquad 0<\lambda<1.
\end{equation}

A key identity we will use is the following, which the reader can easily check. If $N$ has the binary representation \eqref{eq:Nbinrep}, then
\begin{equation}\label{eq:binNsquare}
N^{2}=\sum_{k=1}^{p-1}\left(\sum_{j=k+1}^{p} 2^{n_{j}-n_{k}}\right) 2^{2(n_{k}+1)}+\sum_{k=1}^{p}\left(1-\sum_{j=k+1}^{p} 2^{n_{j}-n_{k}+1}\right) 2^{2 n_{k}}.
\end{equation}
Note the similarity with the expression \eqref{eq:energyalphaN}. We will also use the notation
\begin{equation}\label{def:Rcal}
\mathcal{R}_{\lambda}(N):=\frac{\mathcal{L}_{\lambda}(N)-N^{2} I_{\lambda}(\sigma_1)}{N^{1-\lambda}},\qquad N\geq 1.
\end{equation}
Then, according to \eqref{eq:asympexpLN}, we have
\begin{equation}\label{eq:asympRcal}
\lim_{N\rightarrow\infty}\mathcal{R}_{\lambda}(N)= (2\pi)^{\lambda}\, 2\zeta(-\lambda),\qquad 0<\lambda<2.
\end{equation}
It should also be noted that
\begin{equation}\label{ineq:negR}
\mathcal{R}_{\lambda}(N)<0,\qquad \mbox{for all}\,\,N\geq 1.
\end{equation}
This follows from \eqref{opteqspaced} and the second inequality in \eqref{ineqalphaomega}.

Given an integer $N\geq 1$, let $\tau(N)$ denote the number of ones in the binary digit representation of $N$, i.e., for an integer $N$ as in \eqref{eq:Nbinrep} we write $\tau(N)=p$.

The following result describes the second-order asymptotics of $H_{\lambda}(\alpha_{N,\lambda})$ in the range $0<\lambda<1$. 

\begin{theorem}\label{theo:casello}
Assume $0<\lambda<1$, let $(a_{n})_{n=0}^{\infty}\subset S^{1}$ be a greedy $\lambda$-energy sequence, and $\alpha_{N,\lambda}$, $N\geq 2$, be the configuration \eqref{def:alphaNlambda}. The sequence
\begin{equation}\label{eq:seqsecorderllo}
\left(\frac{H_{\lambda}(\alpha_{N,\lambda})-N^2\,I_{\lambda}(\sigma_{1})}{N^{1-\lambda}}\right)_{N=2}^{\infty}
\end{equation}
is bounded and divergent, and we have
\begin{align}
\limsup_{N\rightarrow\infty}\frac{H_{\lambda}(\alpha_{N,\lambda})-N^{2} I_{\lambda}(\sigma_{1})}{N^{1-\lambda}} & = (2\pi)^{\lambda}\,2\zeta(-\lambda),\label{eq:energysecord:1}\\
\liminf_{N\rightarrow\infty}\frac{H_{\lambda}(\alpha_{N,\lambda})-N^{2} I_{\lambda}(\sigma_{1})}{N^{1-\lambda}} & =\overline{g}(\lambda)\,(2\pi)^{\lambda}\,2\zeta(-\lambda).\label{eq:energysecord:2}
\end{align}
For any $p\geq 1$ and $\vec{\theta}\in\Theta_{p}$, the value $G(\vec{\theta};\lambda)\,(2\pi)^{\lambda}\,2\zeta(-\lambda)$ is a limit point\footnotemark[4]\footnotetext[4]{By limit point of the sequence we mean that there is a subsequence of \eqref{eq:seqsecorderllo} that converges to the indicated value.} of the sequence \eqref{eq:seqsecorderllo}.
\end{theorem}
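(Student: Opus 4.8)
The plan is to combine the exact energy formula \eqref{eq:energyalphaN} with the companion identity \eqref{eq:binNsquare} for $N^{2}$, and then to pass to the limit along a sequence $\mathcal{N}$ that realizes the prescribed vector $\vec{\theta}$. Fix $p\geq 1$ and $\vec{\theta}=(\theta_{1},\ldots,\theta_{p})\in\Theta_{p}$, and let $\mathcal{N}$ be an infinite set of integers $N=2^{n_{1}}+\cdots+2^{n_{p}}$, $n_{1}>\cdots>n_{p}\geq 0$, satisfying \eqref{eq:asympcondThetap}. The first step is to subtract $I_{\lambda}(\sigma_{1})$ times \eqref{eq:binNsquare} from \eqref{eq:energyalphaN}. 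Since the definition \eqref{def:Rcal} gives $\mathcal{L}_{\lambda}(2^{m})-2^{2m}I_{\lambda}(\sigma_{1})=2^{m(1-\lambda)}\mathcal{R}_{\lambda}(2^{m})$, the terms involving $I_{\lambda}(\sigma_{1})$ cancel coefficient by coefficient, leaving
\begin{align*}
H_\lambda(\alpha_N)-N^2 I_\lambda(\sigma_1)
&=\sum_{k=1}^{p-1}\Big(\sum_{j=k+1}^{p}2^{n_j-n_k}\Big)\,2^{(n_k+1)(1-\lambda)}\,\mathcal{R}_\lambda(2^{n_k+1})\\
&\qquad+\sum_{k=1}^{p}\Big(1-\sum_{j=k+1}^{p}2^{n_j-n_k+1}\Big)\,2^{n_k(1-\lambda)}\,\mathcal{R}_\lambda(2^{n_k}).
\end{align*}
Dividing by $N^{1-\lambda}$ and writing $2^{n_k(1-\lambda)}/N^{1-\lambda}=(2^{n_k}/N)^{1-\lambda}$, $2^{(n_k+1)(1-\lambda)}/N^{1-\lambda}=2^{1-\lambda}(2^{n_k}/N)^{1-\lambda}$, and $2^{n_j-n_k}=(2^{n_j}/N)/(2^{n_k}/N)$, turns every factor into a ratio $2^{n_i}/N$ that converges to $\theta_{i}$ along $\mathcal{N}$.

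I would then take the limit along $\mathcal{N}$ term by term. For each index $k$ with $\theta_{k}>0$ we have $n_{k}\to\infty$ along $\mathcal{N}$, so both $\mathcal{R}_{\lambda}(2^{n_{k}})$ and $\mathcal{R}_{\lambda}(2^{n_{k}+1})$ tend to $(2\pi)^{\lambda}\,2\zeta(-\lambda)$ by \eqref{eq:asympRcal}, while the coefficients converge to the corresponding expressions in $\theta_{j}/\theta_{k}$. For indices $k$ with $\theta_{k}=0$, the prefactor $(2^{n_{k}}/N)^{1-\lambda}\to 0$ since $1-\lambda>0$; because the coefficients satisfy $\sum_{j=k+1}^{p}2^{n_{j}-n_{k}}<1$ and the sequence $\mathcal{R}_{\lambda}$ is bounded (it converges by \eqref{eq:asympRcal}), these terms contribute nothing in the limit, exactly in agreement with the convention \eqref{eq:convention}.

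Finally, I would collect the surviving terms and factor out $(2\pi)^{\lambda}\,2\zeta(-\lambda)$. For each $k$ with $\theta_{k}>0$, the first sum contributes $2^{1-\lambda}\theta_{k}^{-\lambda}\sum_{j=k+1}^{p}\theta_{j}$ and the second contributes $\theta_{k}^{1-\lambda}-2\theta_{k}^{-\lambda}\sum_{j=k+1}^{p}\theta_{j}$; adding these and using $2^{1-\lambda}-2=2(2^{-\lambda}-1)$ yields precisely $\theta_{k}^{-\lambda}\big(2(2^{-\lambda}-1)\sum_{j=k+1}^{p}\theta_{j}+\theta_{k}\big)$, which is the summand defining $G(\vec{\theta};\lambda)$ in \eqref{def:Gfunction} (the case $k=p$ matching the empty inner sum). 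Hence the limit along $\mathcal{N}$ equals $G(\vec{\theta};\lambda)\,(2\pi)^{\lambda}\,2\zeta(-\lambda)$, establishing that this value is a limit point of \eqref{eq:seqsecorderllo}. The main obstacle is the careful handling of the indices with $\theta_{k}=0$: one must exploit the \emph{uniform} boundedness of $\mathcal{R}_{\lambda}$ over all $N$ (not merely its convergence as $N\to\infty$), since the low-index powers $2^{n_{k}}$ need not grow, and then verify that the two sums recombine cleanly into the single summand of $G$.
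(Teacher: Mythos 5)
Your computation is correct as far as it goes, and it reproduces exactly the first half of the paper's argument: subtracting $I_{\lambda}(\sigma_{1})$ times \eqref{eq:binNsquare} from \eqref{eq:energyalphaN}, rewriting via $\mathcal{R}_{\lambda}$, and passing to the limit along a $\Theta_{p}$-realizing subsequence (including the correct treatment of indices with $\theta_{k}=0$ and the recombination $2^{1-\lambda}-2=2(2^{-\lambda}-1)$ into the summand of $G$). But this establishes only the \emph{last} assertion of the theorem, that each $G(\vec{\theta};\lambda)\,(2\pi)^{\lambda}\,2\zeta(-\lambda)$ is a limit point. It does not prove boundedness of \eqref{eq:seqsecorderllo}, the limsup identity \eqref{eq:energysecord:1} (which the paper gets from $H_{\lambda}(\alpha_{N,\lambda})\leq\mathcal{L}_{\lambda}(N)$ with equality at $N=2^{n}$), nor divergence, and most importantly it proves only one inequality in \eqref{eq:energysecord:2}: since $\zeta(-\lambda)<0$, your limit-point family yields
\[
\liminf_{N\rightarrow\infty}\frac{H_{\lambda}(\alpha_{N,\lambda})-N^{2} I_{\lambda}(\sigma_{1})}{N^{1-\lambda}}\leq \overline{g}(\lambda)\,(2\pi)^{\lambda}\,2\zeta(-\lambda),
\]
but not the reverse inequality.

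The reverse inequality is the technical heart of the paper's proof and cannot be reached by your method alone. The issue is that an arbitrary convergent subsequence $\mathcal{N}$ need not have $\tau(N)$ (the number of binary digits) bounded; when $\tau(N)\rightarrow\infty$ along $\mathcal{N}$, there is no fixed $p$ and no vector in any $\Theta_{p}$ to pass to, so the term-by-term limit you describe is unavailable. The paper handles this by a two-stage truncation: it isolates the first $M=M(\epsilon)$ terms of the outer sum (using the geometric decay $(2^{n_{k}}/N)^{1-\lambda}\leq 2^{-(1-\lambda)(k-1)}$ to make the tail $S_{N,M,2}$ uniformly small), then truncates the inner sums at $2M$ (defining $\eta_{N,k,1}$, $\eta_{N,k,2}$), extracts a further subsequence on which $2^{n_{i}}/N\rightarrow\theta_{i}$ for $1\leq i\leq 2M$, and compares the resulting finite expression with $G((\theta_{1},\ldots,\theta_{M});\lambda)\geq$ a quantity bounded below by $\overline{g}(\lambda)$ after multiplying by the negative constant. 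Without some argument of this kind, the claimed equality in \eqref{eq:energysecord:2} --- and hence also the divergence claim, which the paper deduces from $\overline{g}(\lambda)>1$ forcing the limsup and liminf to differ --- is not established.
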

\begin{proof}
Let $N\geq 2$ be an integer with binary representation \eqref{eq:Nbinrep}. Applying \eqref{eq:energyalphaN}, \eqref{eq:binNsquare}, and \eqref{def:Rcal}, we get
\begin{align}
\frac{H_{\lambda}(\alpha_{N,\lambda})-N^2 I_{\lambda}(\sigma_{1})}{N^{1-\lambda}} & =
\sum_{k=1}^{p-1}\left(\sum_{j=k+1}^{p} 2^{n_{j}-n_{k}}\right)\frac{\mathcal{L}_{\lambda}(2^{n_{k}+1})-2^{2(n_{k}+1)} I_{\lambda}(\sigma_{1})}{N^{1-\lambda}}\notag\\
& +\sum_{k=1}^{p}\left(1-\sum_{j=k+1}^{p} 2^{n_{j}-n_{k}+1}\right)\frac{\mathcal{L}_{\lambda}(2^{n_{k}})-2^{2 n_{k}} I_{\lambda}(\sigma_{1})}{N^{1-\lambda}}\notag\\
& =\sum_{k=1}^{p-1}\left(\sum_{j=k+1}^{p} 2^{n_{j}-n_{k}}\right)\left(\frac{2^{n_{k}+1}}{N}\right)^{1-\lambda}\mathcal{R}_{\lambda}(2^{n_k+1})\notag\\
& +\sum_{k=1}^{p}\left(1-\sum_{j=k+1}^{p} 2^{n_{j}-n_{k}+1}\right) \left(\frac{2^{n_{k}}}{N}\right)^{1-\lambda} \mathcal{R}_{\lambda}(2^{n_{k}}).\label{eq:energysecordbinrep}
\end{align}

First we show that the sequence \eqref{eq:seqsecorderllo} is bounded. The sequence \eqref{def:Rcal} converges, so there exists $C>0$ such that $|\mathcal{R}_{\lambda}(N)|<C$ for all $N\geq 1$. Therefore, applying the triangle inequality in \eqref{eq:energysecordbinrep},
\begin{gather*}
\left|\frac{H_{\lambda}(\alpha_{N,\lambda})-N^2 I_{\lambda}(\sigma_{1})}{N^{1-\lambda}}\right|\\
\leq C \left(\sum_{k=1}^{p-1}\left(\sum_{j=k+1}^{p} 2^{n_{j}-n_{k}}\right)\left(\frac{2^{n_{k}+1}}{N}\right)^{1-\lambda}+\sum_{k=1}^{p}\left(1+\sum_{j=k+1}^{p} 2^{n_{j}-n_{k}+1}\right) \left(\frac{2^{n_{k}}}{N}\right)^{1-\lambda}\right).
\end{gather*}
Now, we have $n_{j}-n_{k}\leq -(j-k)$ for every $k+1\leq j\leq p$, hence
\begin{equation}\label{eq:boundgeom}
\sum_{j=k+1}^{p} 2^{n_{j}-n_{k}}\leq 2^{-1}+2^{-2}+\cdots+2^{-(p-k)}<1.
\end{equation}
Similarly,
\begin{equation}\label{eq:cleverest}
\left(\frac{2^{n_{k}}}{N}\right)^{1-\lambda}\leq \left(\frac{2^{n_{k}}}{2^{n_{1}}}\right)^{1-\lambda}\leq 2^{-(1-\lambda)(k-1)}.
\end{equation}
So we have the estimate
\[
\left|\frac{H_{\lambda}(\alpha_{N,\lambda})-N^2 I_{\lambda}(\sigma_{1})}{N^{1-\lambda}}\right|\leq C\, (2^{1-\lambda}+3)\sum_{k=1}^{p}2^{-(1-\lambda)(k-1)}<C\,\frac{2^{1-\lambda}+3}{1-2^{\lambda-1}}<\infty.
\]
This justifies the boundedness of \eqref{eq:seqsecorderllo}.

For every $N\geq 2$, we have $H_{\lambda}(\alpha_{N,\lambda})\leq \mathcal{L}_{\lambda}(N)$ because $\mathcal{L}_{\lambda}(N)$ is the $\lambda$-energy of a configuration of $N$ equally spaced points on $S^{1}$, which is the largest among all $N$-point configurations on the circle. Since $\alpha_{2^{n},\lambda}$ is the configuration of the $2^{n}$-th roots of unity, we have $H_{\lambda}(\alpha_{N,\lambda})=\mathcal{L}_{\lambda}(N)$ for each $N=2^{n}$, hence
\[
\limsup_{N\rightarrow\infty}\frac{H_{\lambda}(\alpha_{N,\lambda})-N^2\,I_{\lambda}(\sigma_{1})}{N^{1-\lambda}}=\lim_{n\rightarrow\infty}\mathcal{R}_{\lambda}(2^{n})=(2\pi)^{\lambda}\,2\zeta(-\lambda).
\]
This is \eqref{eq:energysecord:1}.

Now we turn to the proof of \eqref{eq:energysecord:2}. Fix $p\geq 1$, and let $\vec{\theta}=(\theta_{1},\ldots,\theta_{p})\in\Theta_{p}$. By definition, there exists an infinite sequence $\mathcal{N}\subset \mathbb{N}$ such that each $N\in\mathcal{N}$ has a binary representation \eqref{eq:Nbinrep} of length $p$, and \eqref{eq:asympcondThetap} holds. Fix $1\leq k\leq p$, and assume first that the corresponding $\theta_{k}$ is not zero, i.e.,
\[
\lim_{N\in\mathcal{N}}\frac{2^{n_{k}}}{N}=\theta_{k}>0.
\]
In this case, we clearly have $2^{n_{k}}\rightarrow\infty$ as $N\in\mathcal{N}$ approaches infinity. Therefore, in virtue of \eqref{eq:asympcondThetap} and \eqref{eq:asympRcal}, we have
\begin{gather}
\lim_{N\in\mathcal{N}}\mathcal{R}_{\lambda}(2^{n_{k}})=\lim_{N\in\mathcal{N}}\mathcal{R}_{\lambda}(2^{n_{k}+1})=(2\pi)^{\lambda}\,2\zeta(-\lambda)\notag\\
\lim_{N\in\mathcal{N}}\left(\frac{2^{n_{k}+1}}{N}\right)^{1-\lambda}=(2\theta_{k})^{1-\lambda}\qquad \lim_{N\in\mathcal{N}}\left(\frac{2^{n_{k}}}{N}\right)^{1-\lambda}=\theta_{k}^{1-\lambda}\label{eq:plentylimits}\\
\lim_{N\in\mathcal{N}} 2^{n_{j}-n_{k}}=\frac{\theta_{j}}{\theta_{k}}\qquad \mbox{for each}\,\,k+1\leq j\leq p.\notag
\end{gather} 
Assume now that the index $k$ is such that
\[
\lim_{N\in\mathcal{N}}\frac{2^{n_{k}}}{N}=\theta_{k}=0.
\]
In this case, since $0\leq \lambda<1$, the sequence $(\mathcal{R}_{\lambda}(N))$ is bounded, and we have \eqref{eq:boundgeom}, it follows that
\begin{equation}\label{eq:plentylimits:2}
\begin{aligned}
\lim_{N\in\mathcal{N}}\left(\sum_{j=k+1}^{p} 2^{n_{j}-n_{k}}\right)\left(\frac{2^{n_{k}+1}}{N}\right)^{1-\lambda}\mathcal{R}_{\lambda}(2^{n_{k}+1}) & =0,\\
\lim_{N\in\mathcal{N}}\left(1-\sum_{j=k+1}^{p} 2^{n_{j}-n_{k}+1}\right)\left(\frac{2^{n_{k}}}{N}\right)^{1-\lambda}\mathcal{R}_{\lambda}(2^{n_{k}}) & =0.
\end{aligned}
\end{equation}
Suppose that $1\leq\widehat{\kappa}\leq p$ is the last component of $\vec{\theta}=(\theta_{1},\ldots,\theta_{p})$ that is non-zero, i.e.,
\begin{align*}
& \theta_{i}>0 \quad\mbox{for}\,\,1\leq i\leq \widehat{\kappa},\\
& \theta_{i}=0 \quad\mbox{for}\,\,\widehat{\kappa}+1\leq i\leq p.
\end{align*}
Then, applying \eqref{eq:energysecordbinrep}, \eqref{eq:plentylimits}, and \eqref{eq:plentylimits:2}, we obtain
\begin{align*}
\lim_{N\in\mathcal{N}}\frac{H_{\lambda}(\alpha_{N,\lambda})-N^{2}\,I_{\lambda}(\sigma_{1})}{N^{1-\lambda}}
& =\left(\sum_{k=1}^{\widehat{\kappa}}\theta_{k}^{-\lambda}(2(2^{-\lambda}-1)\left(\sum_{j=k+1}^{\widehat{\kappa}}\theta_{j}\right)+\theta_{k})\right) (2\pi)^{\lambda}\,2\zeta(-\lambda)\\
& =G((\theta_{1},\ldots,\theta_{p});\lambda)\,(2\pi)^{\lambda}\,2\zeta(-\lambda),
\end{align*}
where in the second equality we have used \eqref{eq:convention}. This shows that $G((\theta_{1},\ldots,\theta_{p});\lambda)\,(2\pi)^{\lambda}\,2\zeta(-\lambda)$ is a limit point of the sequence \eqref{eq:seqsecorderllo}. Since $p$ and $\vec{\theta}=(\theta_{1},\ldots,\theta_{p})$ were arbitrary, this implies 
\[
\liminf_{N\rightarrow\infty}\frac{H_{\lambda}(\alpha_{N,\lambda})-N^{2}\,I_{\lambda}(\sigma_{1})}{N^{1-\lambda}}\leq \overline{g}(\lambda)\,(2\pi)^{\lambda}\,2\zeta(-\lambda),
\] 
recall that $\zeta(-\lambda)<0$ for $\lambda\in(0,1)$.

To finish, we need to justify the inequality
\[
\liminf_{N\rightarrow\infty}\frac{H_{\lambda}(\alpha_{N,\lambda})-N^{2}\,I_{\lambda}(\sigma_{1})}{N^{1-\lambda}}\geq \overline{g}(\lambda)\,(2\pi)^{\lambda}\,2\zeta(-\lambda).
\]
\emph{Let} $\mathcal{N}\subset\mathbb{N}$ \emph{be an infinite sequence for which the sequence} $\left(\frac{H_{\lambda}(\alpha_{N,\lambda})-N^2\,I_{\lambda}(\sigma_{1})}{N^{1-\lambda}}\right)_{N\in\mathcal{N}}$ \emph{converges}, and let us show that
\begin{equation}\label{eq:ineqtbp}
\lim_{N\in\mathcal{N}}\frac{H_{\lambda}(\alpha_{N,\lambda})-N^2\,I_{\lambda}(\sigma_{1})}{N^{1-\lambda}}\geq 
\overline{g}(\lambda)\,(2\pi)^{\lambda}\,2\zeta(-\lambda).
\end{equation}

First assume that there exists $p\geq 1$ such that infinitely many $N\in\mathcal{N}$ satisfy $\tau(N)=p$. Then, passing to a subsequence $\widetilde{\mathcal{N}}$ of $\mathcal{N}$ if necessary, we may assume that the integers $N=2^{n_{1}}+\cdots+2^{n_{p}}\in\widetilde{\mathcal{N}}$ satisfy
\[
\lim_{N\in\widetilde{\mathcal{N}}} \frac{2^{n_{i}}}{N}=\theta_{i},\qquad \mbox{for all}\,\,i=1,\ldots,p.
\]
In this situation, we have already shown that we get
\[
\lim_{N\in\mathcal{N}}\frac{H_{\lambda}(\alpha_{N,\lambda})-N^2\,I_{\lambda}(\sigma_{1})}{N^{1-\lambda}}=
\lim_{N\in\widetilde{\mathcal{N}}}\frac{H_{\lambda}(\alpha_{N,\lambda})-N^2\,I_{\lambda}(\sigma_{1})}{N^{1-\lambda}}=G((\theta_{1},\ldots,\theta_{p});\lambda) (2\pi)^{\lambda}\,2\zeta(-\lambda),
\]
and \eqref{eq:ineqtbp} holds.

Assume now that such an integer $p$ does not exist. This means that we have $\tau(N)\rightarrow\infty$ as $N\rightarrow\infty$ along the sequence $\mathcal{N}$. For $N=2^{n_{1}}+\cdots+2^{n_{\tau(N)}}\in\mathcal{N}$, $n_{1}>n_{2}>\cdots>n_{\tau(N)}\geq 0$, we rewrite \eqref{eq:energysecordbinrep}:
\begin{gather}
\frac{H_{\lambda}(\alpha_{N,\lambda})-N^2\,I_{\lambda}(\sigma_{1})}{N^{1-\lambda}}\notag\\
=\sum_{k=1}^{\tau(N)}\left(\frac{2^{n_{k}}}{N}\right)^{1-\lambda}\left(2^{1-\lambda}\,\mathcal{R}_{\lambda}(2^{n_{k}+1})\sum_{j=k+1}^{\tau(N)}2^{n_{j}-n_{k}}+\mathcal{R}_{\lambda}(2^{n_{k}})\left(1-\sum_{j=k+1}^{\tau(N)} 2^{n_{j}-n_{k}+1}\right)\right).\label{eq:altexpsecord}
\end{gather}
The idea is now to truncate adequately the sums that appear in \eqref{eq:altexpsecord}. Let us define
\begin{align}
\eta_{N,k} & :=2^{1-\lambda}\,\mathcal{R}_{\lambda}(2^{n_{k}+1})\sum_{j=k+1}^{\tau(N)}2^{n_{j}-n_{k}}+\mathcal{R}_{\lambda}(2^{n_{k}})\left(1-\sum_{j=k+1}^{\tau(N)} 2^{n_{j}-n_{k}+1}\right)\notag\\
& =\mathcal{R}_{\lambda}(2^{n_{k}})+(2^{-\lambda}\mathcal{R}_{\lambda}(2^{n_{k}+1})-\mathcal{R}_{\lambda}(2^{n_{k}}))\sum_{j=k+1}^{\tau(N)} 2^{n_{j}-n_{k}+1}.\label{def:etaNk} 
\end{align}
The boundedness of the sequence $(\mathcal{R}_{\lambda}(N))$ and \eqref{eq:boundgeom} imply that there exists an absolute constant $C_{1}>0$ such that
\begin{equation}\label{eq:boundetaNk}
|\eta_{N,k}|\leq C_{1},\qquad \mbox{for all}\,\,N\in\mathcal{N}\,\,\mbox{and}\,\,1\leq k\leq \tau(N).
\end{equation}
Let $0<\epsilon<1$ be fixed. Let $M=M(\epsilon,\lambda)\geq 1$ be large enough so that
\begin{equation}\label{eq:truncconvseries}
\sum_{k=M}^{\infty}2^{-(1-\lambda)k}<\epsilon.
\end{equation}
The following choice suffices:
\begin{equation}\label{def:constM}
M:=\left\lceil-\frac{\log (\epsilon(2^{1-\lambda}-1))}{(1-\lambda)\log 2}\right\rceil+2,
\end{equation}
where $\lceil\cdot\rceil$ is the ceiling function.

We can write
\begin{equation}\label{eq:breaksecord}
\frac{H_{\lambda}(\alpha_{N,\lambda})-N^{2} I_{\lambda}(\sigma_{1})}{N^{1-\lambda}}=S_{N,M,1}+S_{N,M,2},
\end{equation}
where
\[
S_{N,M,1}:=\sum_{k=1}^{M}\left(\frac{2^{n_{k}}}{N}\right)^{1-\lambda} \eta_{N,k},\qquad S_{N,M,2}:=\sum_{k=M+1}^{\tau(N)}\left(\frac{2^{n_{k}}}{N}\right)^{1-\lambda} \eta_{N,k}.
\]
Then, from \eqref{eq:cleverest}, \eqref{eq:boundetaNk}, and \eqref{eq:truncconvseries}, we deduce
\begin{equation}\label{eq:boundSNM2}
|S_{N,M,2}|\leq C_{1}\epsilon,\qquad\mbox{for all}\,\,N\in\mathcal{N}.
\end{equation}
Now we concentrate our analysis on $S_{N,M,1}$. So in the rest of the proof, $k$ is an index in the range $1\leq k\leq M$.

We split the expression of $\eta_{N,k}$ in \eqref{def:etaNk} as follows:
\begin{equation}\label{eq:splitetaNkintwo}
\eta_{N,k}=\eta_{N,k,1}+\eta_{N,k,2},
\end{equation}
where
\begin{align*}
\eta_{N,k,1} & :=\mathcal{R_{\lambda}}(2^{n_{k}})+(2^{-\lambda}\,\mathcal{R}_{\lambda}(2^{n_{k}+1})-\mathcal{R}_{\lambda}(2^{n_{k}}))\sum_{j=k+1}^{2M} 2^{n_{j}-n_{k}+1},\\
\eta_{N,k,2} & :=(2^{-\lambda}\,\mathcal{R}_{\lambda}(2^{n_{k}+1})-\mathcal{R}_{\lambda}(2^{n_{k}}))\sum_{j=2M+1}^{\tau(N)} 2^{n_{j}-n_{k}+1}.
\end{align*}
Recall that $n_{j}-n_{k}\leq -(j-k)$ for all $j\geq k+1$, hence
if $1\leq k\leq M$ and $2M+1\leq j$, then
\[
\sum_{j=2M+1}^{\tau(N)} 2^{n_{j}-n_{k}+1}\leq \sum_{j=2M+1}^{\tau(N)} 2^{-j+M+1}<\sum_{t=M}^{\infty}2^{-t}<\epsilon
\]
where we used \eqref{eq:truncconvseries}. So by the boundedness of the sequence $(\mathcal{R}_{\lambda}(N))$, we obtain that there exists an absolute constant $C_{2}>0$ such that
\begin{equation}\label{eq:boundetaNk2}
|\eta_{N,k,2}|<C_{2}\epsilon,\qquad \mbox{for all}\,\,N\in\mathcal{N}\,\,\mbox{and}\,\,1\leq k\leq M.
\end{equation}
Now, by definition of $S_{N,M,1}$ and \eqref{eq:splitetaNkintwo}, we have
\[
S_{N,M,1}=\sum_{k=1}^{M}\left(\frac{2^{n_{k}}}{N}\right)^{1-\lambda} \eta_{N,k,1}+\sum_{k=1}^{M}\left(\frac{2^{n_{k}}}{N}\right)^{1-\lambda}\eta_{N,k,2}.
\]
From the estimate \eqref{eq:boundetaNk2} we get
\begin{equation}\label{eq:boundSNM1}
\left|\sum_{k=1}^{M}\left(\frac{2^{n_{k}}}{N}\right)^{1-\lambda}\eta_{N,k,2}\right|\leq C_{2} M \epsilon.
\end{equation}
We apply \eqref{eq:breaksecord}, \eqref{eq:boundSNM2}, and \eqref{eq:boundSNM1}, and conclude that
\begin{equation}\label{eq:decompsecord}
\begin{gathered}
\frac{H_{\lambda}(\alpha_{N,\lambda})-N^{2}\,I_{\lambda}(\sigma_{1})}{N^{1-\lambda}}=\phi_{N}+\sum_{k=1}^{M}\left(\frac{2^{n_{k}}}{N}\right)^{1-\lambda} \eta_{N,k,1}\\
=\phi_{N}+\sum_{k=1}^{M}\left(\frac{2^{n_{k}}}{N}\right)^{1-\lambda}\left(\mathcal{R_{\lambda}}(2^{n_{k}})+(2^{-\lambda}\,\mathcal{R}_{\lambda}(2^{n_{k}+1})-\mathcal{R}_{\lambda}(2^{n_{k}}))\sum_{j=k+1}^{2M} 2^{n_{j}-n_{k}+1}\right)
\end{gathered}
\end{equation}
where
\begin{equation}\label{eq:estphiN}
|\phi_{N}|\leq C_{1}\epsilon+C_{2}M\epsilon,\qquad \mbox{for all}\,\,N\in\mathcal{N}.
\end{equation}
Since $M$ is a fixed finite constant, there exists a subsequence $\widetilde{\mathcal{N}}\subset\mathcal{N}$ such that
\begin{equation}\label{eq:subseqassump}
\lim_{N\in\widetilde{\mathcal{N}}}\frac{2^{n_{i}}}{N}=\theta_{i},\qquad \mbox{for all}\,\,1\leq i\leq 2M.
\end{equation}
If $\theta_{k}>0$ for some $1\leq k\leq M$, then $2^{n_{k}}\rightarrow\infty$ as $N\in\widetilde{\mathcal{N}}$ tends to infinity, hence it follows from \eqref{eq:asympRcal} and \eqref{eq:subseqassump} that for this $k$, 
\begin{equation}\label{eq:finallimit}
\begin{gathered}
\lim_{N\in\widetilde{\mathcal{N}}}\left(\frac{2^{n_{k}}}{N}\right)^{1-\lambda}\Big(\mathcal{R_{\lambda}}(2^{n_{k}})+(2^{-\lambda}\,\mathcal{R}_{\lambda}(2^{n_{k}+1})-\mathcal{R}_{\lambda}(2^{n_{k}}))\sum_{j=k+1}^{2M} 2^{n_{j}-n_{k}+1}\Big)\\
=\theta_{k}^{-\lambda}\Big(2(2^{-\lambda}-1)\left(\sum_{j=k+1}^{2M}\theta_{j}\right)+\theta_{k}\Big) (2\pi)^{\lambda}\,2\zeta(-\lambda).
\end{gathered}
\end{equation}
And if $\theta_{k}=0$ for some $1\leq k\leq M$, then as explained before (see \eqref{eq:plentylimits:2}), the limit in \eqref{eq:finallimit} is zero. In this case we can also express the limit value zero as the right-hand side of \eqref{eq:finallimit}, keeping in mind the convention \eqref{eq:convention}. 

From \eqref{eq:decompsecord} and \eqref{eq:finallimit} we obtain
\begin{align*}
\lim_{N\in\mathcal{N}}\frac{H_{\lambda}(\alpha_{N,\lambda})-N^{2}\,I_{\lambda}(\sigma_{1})}{N^{1-\lambda}} & =\lim_{N\in\widetilde{\mathcal{N}}}\frac{H_{\lambda}(\alpha_{N,\lambda})-N^{2}\,I_{\lambda}(\sigma_{1})}{N^{1-\lambda}} \\
& = \lim_{N\in\widetilde{\mathcal{N}}}\phi_{N}+\lim_{N\in\widetilde{\mathcal{N}}}\sum_{k=1}^{M}\left(\frac{2^{n_{k}}}{N}\right)^{1-\lambda} \eta_{N,k,1}\\
& =\lim_{N\in\widetilde{\mathcal{N}}}\phi_{N}+(2\pi)^{\lambda}\,2\zeta(-\lambda)\Big(\sum_{k=1}^{M}\theta_{k}^{-\lambda}\Big(2(2^{-\lambda}-1)\left(\sum_{j=k+1}^{2M}\theta_{j}\right)+\theta_{k}\Big)\Big)\\
& \geq \lim_{N\in\widetilde{\mathcal{N}}}\phi_{N}+(2\pi)^{\lambda}\,2\zeta(-\lambda)\Big(\sum_{k=1}^{M}\theta_{k}^{-\lambda}\Big(2(2^{-\lambda}-1)\left(\sum_{j=k+1}^{M}\theta_{j}\right)+\theta_{k}\Big)\Big)\\
& =\lim_{N\in\widetilde{\mathcal{N}}}\phi_{N}+(2\pi)^{\lambda}\,2\zeta(-\lambda)\,G((\theta_{1},\ldots,\theta_{M});\lambda)\\
& \geq \lim_{N\in\widetilde{\mathcal{N}}}\phi_{N}+\overline{g}(\lambda)\,(2\pi)^{\lambda}\,2\zeta(-\lambda)\\
& \geq -(C_{1}\epsilon+C_{2} M(\epsilon)\epsilon)+\overline{g}(\lambda)\,(2\pi)^{\lambda}\,2\zeta(-\lambda)
\end{align*}
where we used \eqref{eq:estphiN} in the last inequality. Letting $\epsilon\rightarrow 0$, we get $M(\epsilon)\epsilon\rightarrow 0$ (see \eqref{def:constM}), and \eqref{eq:ineqtbp} is proved. This finishes the proof of \eqref{eq:energysecord:2}.

The estimate \eqref{eq:estoverg} implies that the limit values in \eqref{eq:energysecord:1} and \eqref{eq:energysecord:2} are different, so the sequence \eqref{eq:seqsecorderllo} diverges.
\end{proof}

In Figure~\ref{plotsecorder2} we present plots of the normalized second-order sequences \eqref{eq:seqsecorderllo}, for different values of $\lambda\in(0,1)$.  

\begin{figure}[h]
\centering
\begin{subfigure}[b]{0.475\textwidth}
\centering
\includegraphics[width=\textwidth]{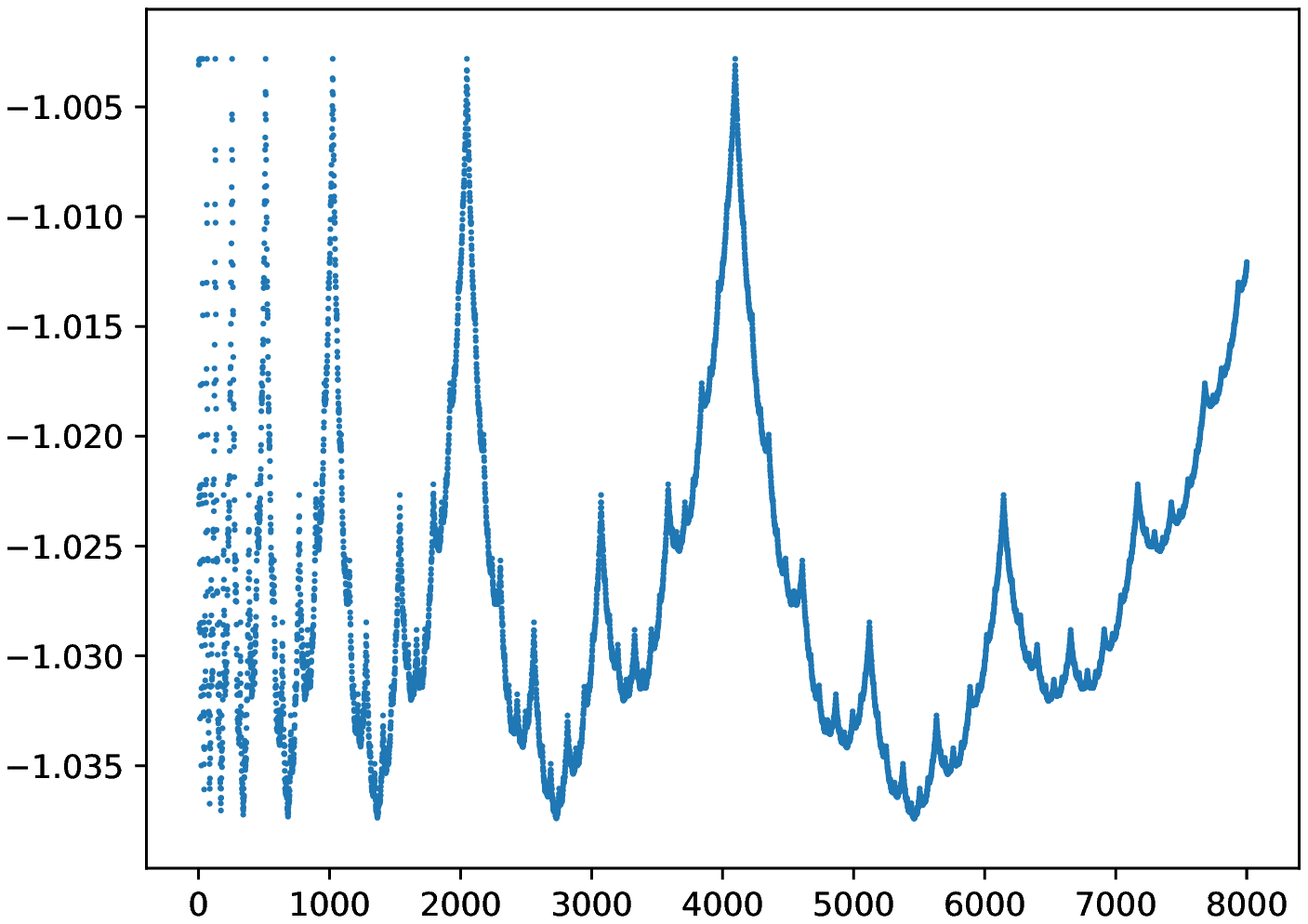}
\caption[]{{$\lambda = 0.1$}}
\end{subfigure}
\hfill
\begin{subfigure}[b]{0.475\textwidth}  
\centering 
\includegraphics[width=\textwidth]{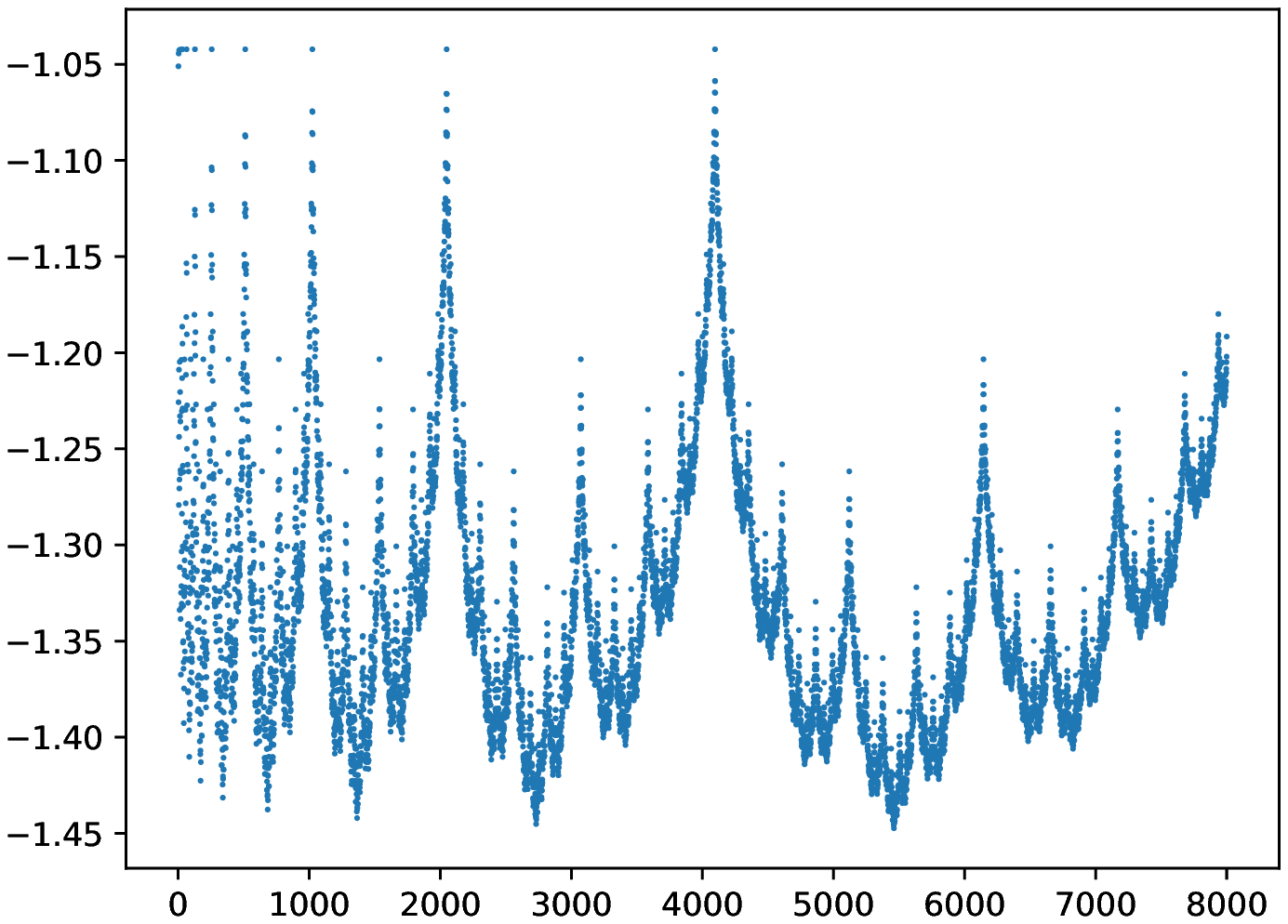}
\caption[]{{$\lambda = 0.5$}}
\end{subfigure}
\vskip\baselineskip
\begin{subfigure}[b]{0.475\textwidth}   
\centering 
\includegraphics[width=\textwidth]{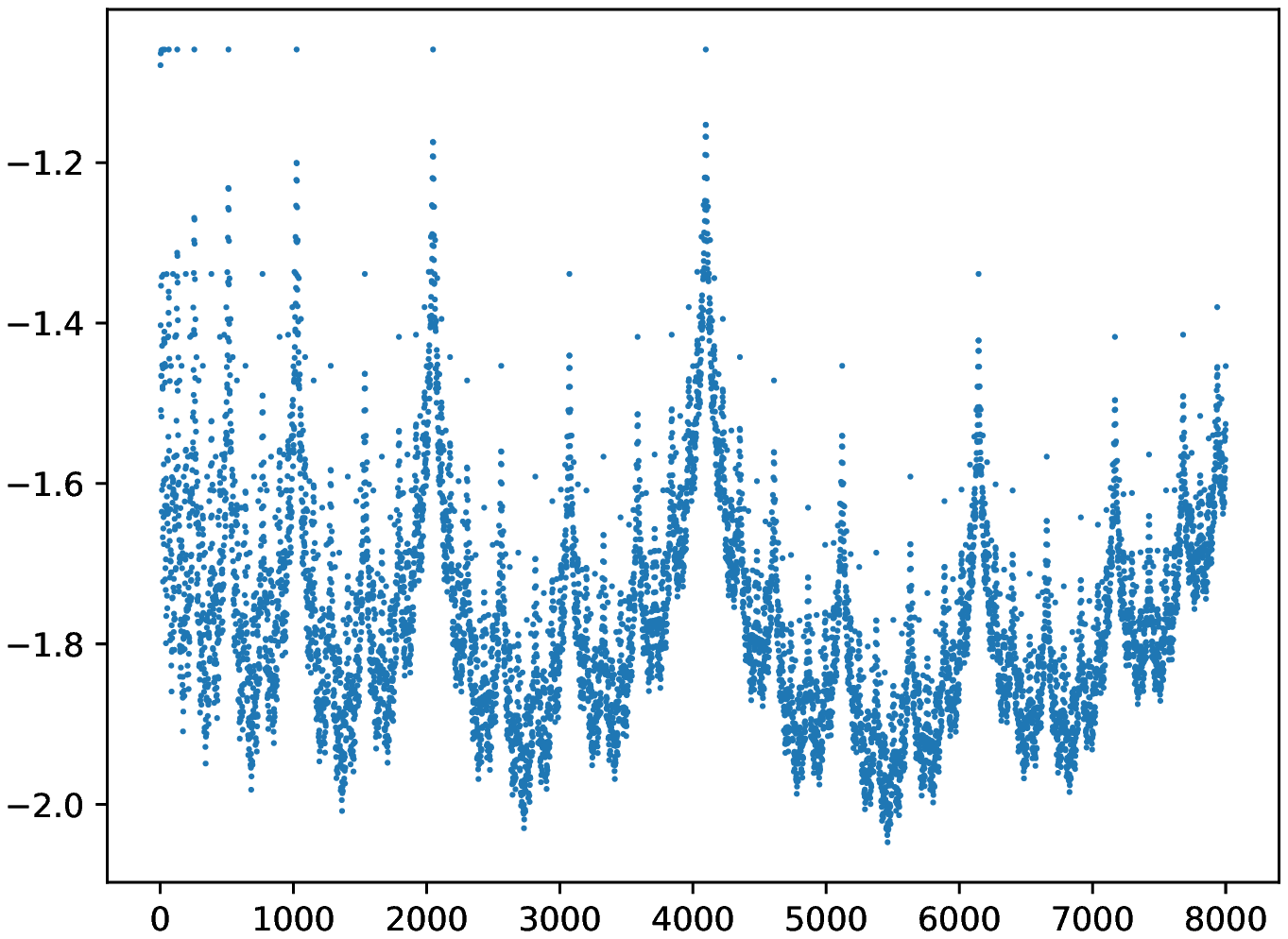}
\caption[]{{$\lambda = 0.7$}}
\end{subfigure}
\quad
\begin{subfigure}[b]{0.475\textwidth}   
\centering 
\includegraphics[width=\textwidth]{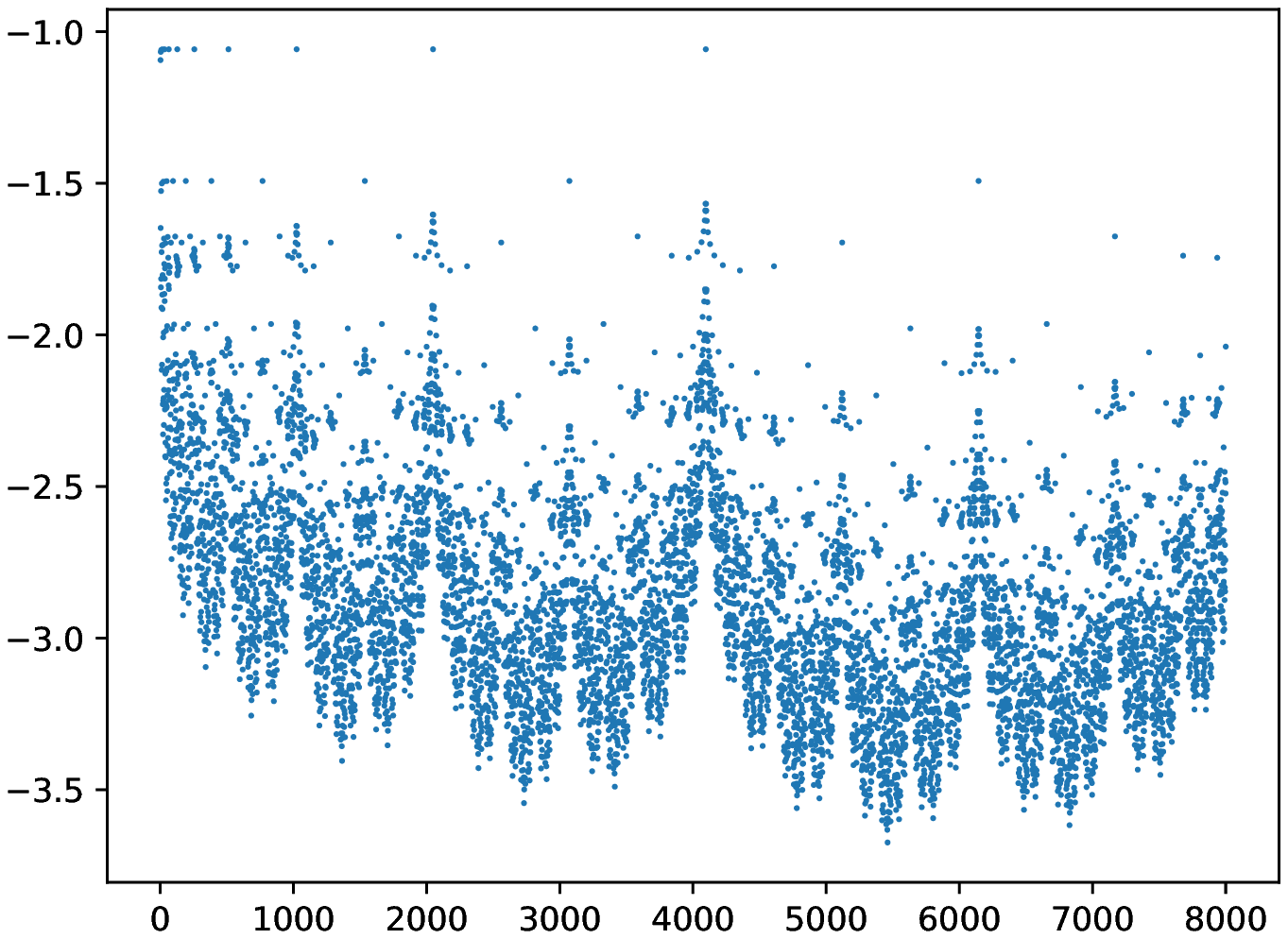}
\caption[]{{$\lambda = 0.9$}}
\end{subfigure}
\caption[]{{Plots of the normalized second-order energy \eqref{eq:seqsecorderllo} for $0<\lambda<1$ and $2\leq N\leq 8000$.}}
\label{plotsecorder2}
\end{figure}

The next result concerns the second-order asymptotics of $H_{\lambda}(\alpha_{N,\lambda})$ in the range $1<\lambda<2$.

\begin{theorem}\label{theo:caselgo}
Assume $1<\lambda<2$, let $(a_{n})_{n=0}^{\infty}\subset S^{1}$ be a greedy $\lambda$-energy sequence, and $\alpha_{N,\lambda}$, $N\geq 2$, be the configuration \eqref{def:alphaNlambda}. The sequence
\begin{equation}\label{eq:secordseqlgt1}
(H_{\lambda}(\alpha_{N,\lambda})-N^{2} I_{\lambda}(\sigma_{1}))_{N=2}^{\infty}
\end{equation}
is bounded and divergent. We have
\begin{align}
\limsup_{N\rightarrow\infty}\,(H_{\lambda}(\alpha_{N,\lambda})-N^2\,I_{\lambda}(\sigma_{1})) & =0 \label{eq:secordlgt1:1}\\
\liminf_{N\rightarrow\infty}\,(H_{\lambda}(\alpha_{N,\lambda})-N^2\,I_{\lambda}(\sigma_{1})) & \leq s_{\lambda} \label{eq:secordlgt1:2}
\end{align}
where\footnotemark[4]\footnotetext[4]{Our numerical computations of the terms in the sequence \eqref{eq:secordseqlgt1} seem to indicate that the constant $s_{\lambda}$ is in fact the liminf of the sequence \eqref{eq:secordseqlgt1}.}
\begin{equation}\label{def:slambda}
s_{\lambda}:=\frac{1}{3}\sum_{k=0}^{\infty}
\left(1+\frac{(-1)^{k}}{2^{k-1}}\right)(\mathcal{L}_{\lambda}(2^{k})-2^{2k} I_{\lambda}(\sigma_{1})).
\end{equation}
We also have
\begin{equation}\label{eq:liminfinf}
\liminf_{N\rightarrow\infty}\,(H_{\lambda}(\alpha_{N,\lambda})-N^2\,I_{\lambda}(\sigma_{1}))=\inf_{N\geq 2}\,\{H_{\lambda}(\alpha_{N,\lambda})-N^2\,I_{\lambda}(\sigma_{1})\}.
\end{equation}
Each term in the sequence \eqref{eq:secordseqlgt1} is a limit point of the sequence itself.
\end{theorem}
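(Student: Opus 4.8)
The plan is to set $D_\lambda(N):=H_\lambda(\alpha_{N,\lambda})-N^2 I_\lambda(\sigma_1)$ and to exploit the exact binary formula for it. Subtracting \eqref{eq:binNsquare} from \eqref{eq:energyalphaN} and using $\mathcal{L}_\lambda(2^{m})-2^{2m}I_\lambda(\sigma_1)=(2^{m})^{1-\lambda}\mathcal{R}_\lambda(2^{m})$ (from \eqref{def:Rcal}) puts $D_\lambda(N)$ into precisely the form already derived in \eqref{eq:altexpsecord}--\eqref{def:etaNk}, namely $D_\lambda(N)=\sum_{k=1}^{p}(2^{n_k})^{1-\lambda}\eta_{N,k}$; that derivation is valid throughout $0<\lambda<2$. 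The decisive new feature for $1<\lambda<2$ is that $1-\lambda<0$, so the weights $(2^{n_k})^{1-\lambda}$ are summable over the distinct nonnegative integers $n_1>\cdots>n_p$. Together with the uniform bound $|\eta_{N,k}|\le C_1$ from \eqref{eq:boundetaNk}, this gives $|D_\lambda(N)|\le C_1\sum_{m\ge 0}2^{(1-\lambda)m}=C_1/(1-2^{1-\lambda})$, so the sequence \eqref{eq:secordseqlgt1} is bounded.

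For the limsup I would use that $\mathcal{L}_\lambda(N)$ is the maximal $N$-point energy on $S^1$, \eqref{opteqspaced}, so $D_\lambda(N)\le \mathcal{L}_\lambda(N)-N^2 I_\lambda(\sigma_1)=N^{1-\lambda}\mathcal{R}_\lambda(N)$. Since $N^{1-\lambda}\to 0$ and $\mathcal{R}_\lambda(N)$ converges by \eqref{eq:asympRcal}, this upper bound tends to $0$, giving $\limsup\le 0$; along $N=2^n$ the section consists of the roots of unity, so $D_\lambda(2^n)=(2^n)^{1-\lambda}\mathcal{R}_\lambda(2^n)\to 0$ and equality holds. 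This proves \eqref{eq:secordlgt1:1}.

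The heart of the argument is a locality observation: the quantity $\eta_{N,k}$ in \eqref{def:etaNk} depends only on $n_k$ and on the bits of $N$ lying below position $n_k$. Hence, for a fixed $N_0\ge 2$ with top bit $2^{m_1}$ and any $n>m_1$, the integer $N_0+2^{n}$ keeps exactly the low-order bits of $N_0$: every term of $D_\lambda(N_0+2^n)$ coming from a bit of $N_0$ is unchanged, while the new top bit contributes only $(2^{n})^{1-\lambda}\eta_{N_0+2^n,1}\to 0$. Thus $D_\lambda(N_0+2^n)\to D_\lambda(N_0)$, so \emph{every} value $D_\lambda(N_0)$ is a limit point of \eqref{eq:secordseqlgt1}; this is the final assertion of the theorem. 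Because $\liminf\ge\inf_{N\ge 2}D_\lambda(N)$ always holds, while $\liminf$ is at most every limit point and hence at most $D_\lambda(N_0)$ for each $N_0$, we obtain $\liminf=\inf_{N\ge 2}D_\lambda(N)$, which is \eqref{eq:liminfinf}.

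To obtain \eqref{eq:secordlgt1:2} I would single out the pattern $N_L:=\sum_{s=0}^{L}4^{s}=(4^{L+1}-1)/3$, whose binary ones occupy the even positions $0,2,\dots,2L$. Collecting, via \eqref{eq:energyalphaN} and \eqref{eq:binNsquare}, the coefficient of each $E_\lambda(2^m):=\mathcal{L}_\lambda(2^m)-2^{2m}I_\lambda(\sigma_1)$ in $D_\lambda(N_L)$, a finite geometric-sum computation gives the value $\tfrac13\bigl(1+(-1)^m 2^{1-m}\bigr)$ for all $0\le m\le 2L+1$ and $0$ for $m\ge 2L+2$; hence $D_\lambda(N_L)$ is exactly the $(2L+1)$-th partial sum of the series in \eqref{def:slambda}. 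Since $E_\lambda(2^m)=(2^m)^{1-\lambda}\mathcal{R}_\lambda(2^m)=O(2^{(1-\lambda)m})$, the series converges and $D_\lambda(N_L)\to s_\lambda$, so $s_\lambda$ is a limit point and $\liminf\le s_\lambda$. Finally, each coefficient $\tfrac13(1+(-1)^m 2^{1-m})$ is nonnegative while $E_\lambda(2^m)<0$ by \eqref{ineq:negR}, and the $m=0$ term is already strictly negative, so $s_\lambda<0=\limsup$ and the sequence diverges. The main obstacle is the exact coefficient computation for $N_L$ (identifying this even-bit pattern and verifying the closed form $\tfrac13(1+(-1)^m 2^{1-m})$, then controlling the geometric tail of the series); note also that this route yields only $\liminf\le s_\lambda$, since proving equality would require comparing $N_L$ against every competing binary pattern, which is precisely why the reverse inequality is left as a numerical remark.
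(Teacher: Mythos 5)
Your proposal is correct and follows essentially the same route as the paper: the same binary-representation identity (the difference of \eqref{eq:energyalphaN} and \eqref{eq:binNsquare} rewritten via $\mathcal{R}_\lambda$) with the geometric bound for boundedness, the comparison with $\mathcal{L}_\lambda(N)$ for the limsup, the same ``prepend a high bit $2^{n}$ and let $n\to\infty$'' argument for \eqref{eq:liminfinf} and the limit-point claim, and the same subsequence $N(p)=(4^{p}-1)/3$ whose coefficients $\tfrac13\bigl(1+(-1)^{m}2^{1-m}\bigr)$ reproduce \eqref{def:slambda}. The only cosmetic differences are that you organize the bound through the $\eta_{N,k}$ notation and justify convergence of the series for $s_\lambda$ by the geometric decay $O(2^{(1-\lambda)m})$ rather than by monotonicity and boundedness as the paper does; both are valid.
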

\begin{proof}
Let $N\geq 2$ be as in \eqref{eq:Nbinrep}. Applying \eqref{eq:energyalphaN}, \eqref{eq:binNsquare}, and \eqref{def:Rcal}, we have
\begin{equation}\label{eq:diffsecord}
\begin{aligned}
H_{\lambda}(\alpha_{N,\lambda})-N^2 I_{\lambda}(\sigma_{1}) & =
\sum_{k=1}^{p-1}\Big(\sum_{j=k+1}^{p} 2^{n_{j}-n_{k}}\Big)\left(2^{n_{k}+1}\right)^{1-\lambda}\,\mathcal{R}_{\lambda}(2^{n_k+1})\\
& +\sum_{k=1}^{p}\Big(1-\sum_{j=k+1}^{p} 2^{n_{j}-n_{k}+1}\Big) \left(2^{n_{k}}\right)^{1-\lambda}\,\mathcal{R}_{\lambda}(2^{n_{k}}).
\end{aligned}
\end{equation}
The sequence $(\mathcal{R}_{\lambda}(N))$ is bounded, and let $C>0$ be a constant such that $|\mathcal{R}_{\lambda}(N)|<C$ for all $N$. Applying the triangle inequality in \eqref{eq:diffsecord}, we get
\begin{gather*}
|H_{\lambda}(\alpha_{N,\lambda})-N^2 I_{\lambda}(\sigma_{1})| \\ 
\leq C \left(\sum_{k=1}^{p-1}\Big(\sum_{j=k+1}^{p} 2^{n_{j}-n_{k}}\Big)\left(2^{n_{k}+1}\right)^{1-\lambda}+\sum_{k=1}^{p}\Big(1+\sum_{j=k+1}^{p} 2^{n_{j}-n_{k}+1}\Big) \left(2^{n_{k}}\right)^{1-\lambda}\right).
\end{gather*}
Using \eqref{eq:boundgeom} and the estimate $\sum_{k=1}^{p} (2^{n_{k}})^{1-\lambda}<\sum_{t=0}^{\infty}(2^{1-\lambda})^{t}=1/(1-2^{1-\lambda})$, we obtain the uniform bound
\[
|H_{\lambda}(\alpha_{N,\lambda})-N^2 I_{\lambda}(\sigma_{1})|\leq C\frac{2^{1-\lambda}+3}{1-2^{1-\lambda}},
\]
so the sequence \eqref{eq:secordseqlgt1} is bounded. 

We have, for all $N\geq 2$,
\[
H_{\lambda}(\alpha_{N,\lambda})-N^{2} I_{\lambda}(\sigma_{1})\leq \mathcal{L}_{\lambda}(N)-N^{2} I_{\lambda}(\sigma_{1})=N^{1-\lambda}\,\mathcal{R}_{\lambda}(N),
\]
with equality if $N=2^{n}$, $n\geq 1$. Since $\lim_{N\rightarrow\infty} N^{1-\lambda}\,\mathcal{R}_{\lambda}(N)=0$, we obtain \eqref{eq:secordlgt1:1}.

Let us justify the identity \eqref{eq:liminfinf} now. If we show that for each fixed $\widehat{N}=2^{n_{1}}+\cdots+2^{n_{p}}$, $n_{1}>\ldots>n_{p}$, we have 
\begin{equation}\label{ineqsecordNhat}
\liminf_{N\rightarrow\infty}\,(H_{\lambda}(\alpha_{N,\lambda})-N^2 I_{\lambda}(\sigma_{1}))\leq H_{\lambda}(\alpha_{\widehat{N},\lambda})-\widehat{N}^2 I_{\lambda}(\sigma_{1}),
\end{equation}
then \eqref{eq:liminfinf} will be justified. For such $\widehat{N}$ fixed, take $\widetilde{N}=2^{n_{0}}+\widehat{N}$, and let $n_{0}\rightarrow\infty$. Since $(2^{n_{0}})^{1-\lambda}\rightarrow 0$, it easily follows from \eqref{eq:diffsecord} that
\[
\lim_{n_{0}\rightarrow\infty}\,(H_{\lambda}(\alpha_{\widetilde{N},\lambda})-\widetilde{N}^2 I_{\lambda}(\sigma_{1}))=H_{\lambda}(\alpha_{\widehat{N},\lambda})-\widehat{N}^2 I_{\lambda}(\sigma_{1}),
\]
which justifies \eqref{ineqsecordNhat}, and shows that every term in the sequence \eqref{eq:secordseqlgt1} is a limit point of the sequence itself.

Now we justify \eqref{eq:secordlgt1:2}. Consider the subsequence
\begin{equation}\label{def:specsub}
N(p):=\sum_{k=0}^{p-1}2^{2k}=\frac{4^{p}-1}{3}.
\end{equation}
Applying \eqref{eq:diffsecord} for $N=N(p)$, we have 
\begin{align}\label{eq:spsb}
H_{\lambda}(\alpha_{N,\lambda})-N^2 I_{\lambda}(\sigma_{1}) & =
\sum_{k=1}^{p-1}\frac{1}{3}\Big(1-\frac{1}{4^{p-k}}\Big)\left(2^{2(p-k)+1}\right)^{1-\lambda}\,\mathcal{R}_{\lambda}(2^{2(p-k)+1})\notag\\
& +\sum_{k=1}^{p}\frac{1}{3}\Big(1+\frac{2}{4^{p-k}}\Big)\left(2^{2(p-k)}\right)^{1-\lambda}\,\mathcal{R}_{\lambda}(2^{2(p-k)})\notag\\
& =\frac{1}{3}\sum_{m=0}^{p-1}\Big(1-\frac{1}{4^{m}}\Big)
(\mathcal{L}_{\lambda}(2^{2m+1})-2^{4m+2} I_{\lambda}(\sigma_{1}))\notag\\
&+\frac{1}{3}\sum_{m=0}^{p-1}\Big(1+\frac{2}{4^{m}}\Big)
(\mathcal{L}_{\lambda}(2^{2m})-2^{4m} I_{\lambda}(\sigma_{1}))
\end{align}
where in the second equality we applied the substitution $m=p-k$ and \eqref{def:Rcal}. This sequence is bounded and monotonic (every term in the sums is $\leq 0$), so it converges to
\begin{align*}
\lim_{p\rightarrow\infty}\,(H_{\lambda}(\alpha_{N,\lambda})-N^2 I_{\lambda}(\sigma_{1})) & = \frac{1}{3}\sum_{m=0}^{\infty}\Big(1-\frac{1}{4^{m}}\Big)
(\mathcal{L}_{\lambda}(2^{2m+1})-2^{4m+2} I_{\lambda}(\sigma_{1}))\\
& +\frac{1}{3}\sum_{m=0}^{\infty}\Big(1+\frac{2}{4^{m}}\Big)
(\mathcal{L}_{\lambda}(2^{2m})-2^{4m} I_{\lambda}(\sigma_{1}))
\end{align*}
and this is exactly \eqref{def:slambda}. 

Each term in the series \eqref{def:slambda} is negative, except the second one, which is zero. The first term in the series is $-I_{\lambda}(\sigma_{1})$, so $s_{\lambda}<-I_{\lambda}(\sigma_{1})<0$. Hence, the sequence \eqref{eq:secordseqlgt1} is divergent. 
\end{proof}

In Figure~\ref{plotsecord4} we present some plots of sequences \eqref{eq:secordseqlgt1} in the range $1<\lambda<2$.

\begin{figure}[h]
\centering
\begin{subfigure}[b]{0.475\textwidth}
\centering
\includegraphics[width=\textwidth]{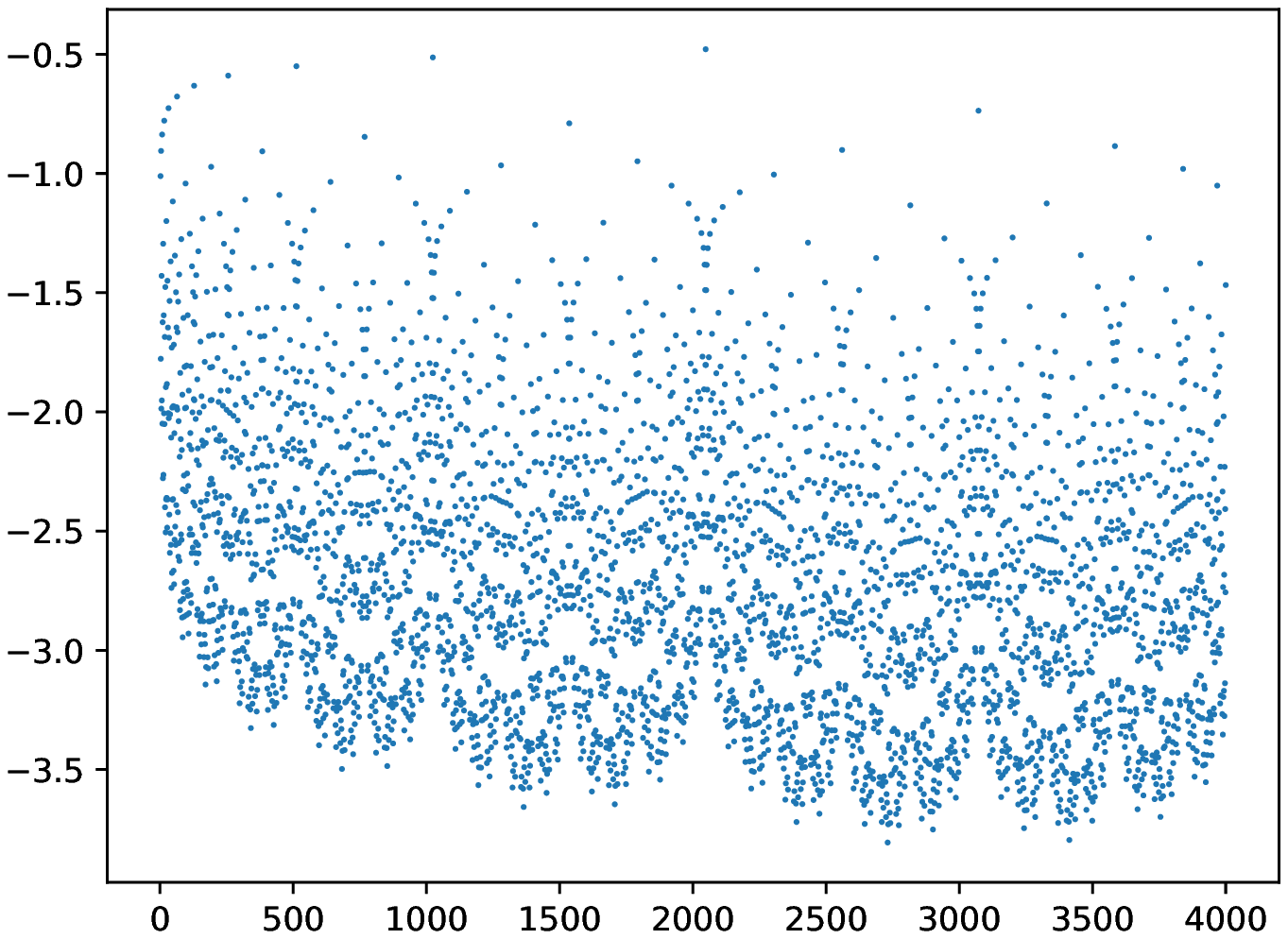}
\caption[]{{$\lambda = 1.1$}}    
\end{subfigure}
\hfill
\begin{subfigure}[b]{0.475\textwidth}  
\centering 
\includegraphics[width=\textwidth]{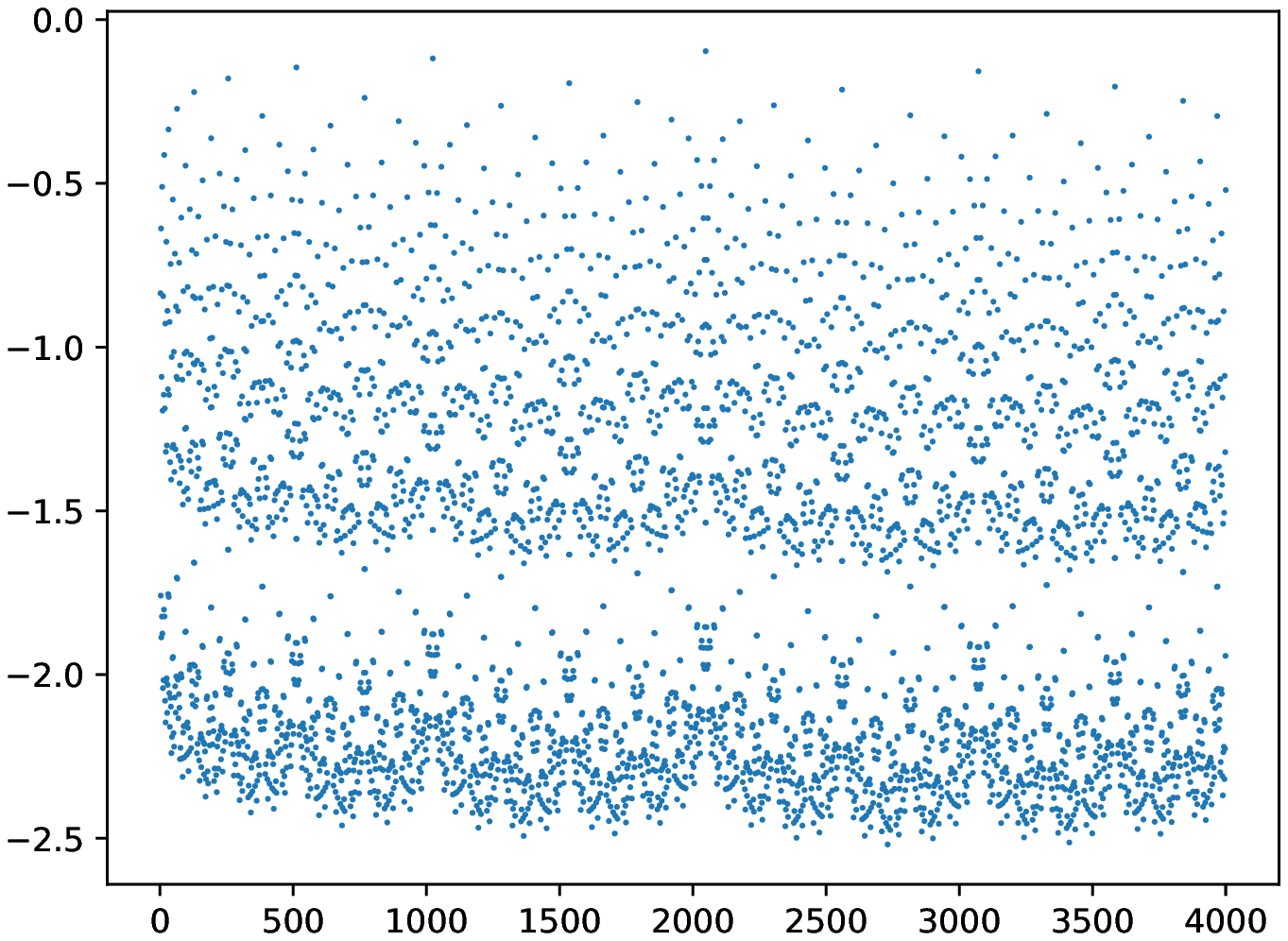}
\caption[]{{$\lambda = 1.3$}}
\end{subfigure}
\vskip\baselineskip
\begin{subfigure}[b]{0.475\textwidth}   
\centering 
\includegraphics[width=\textwidth]{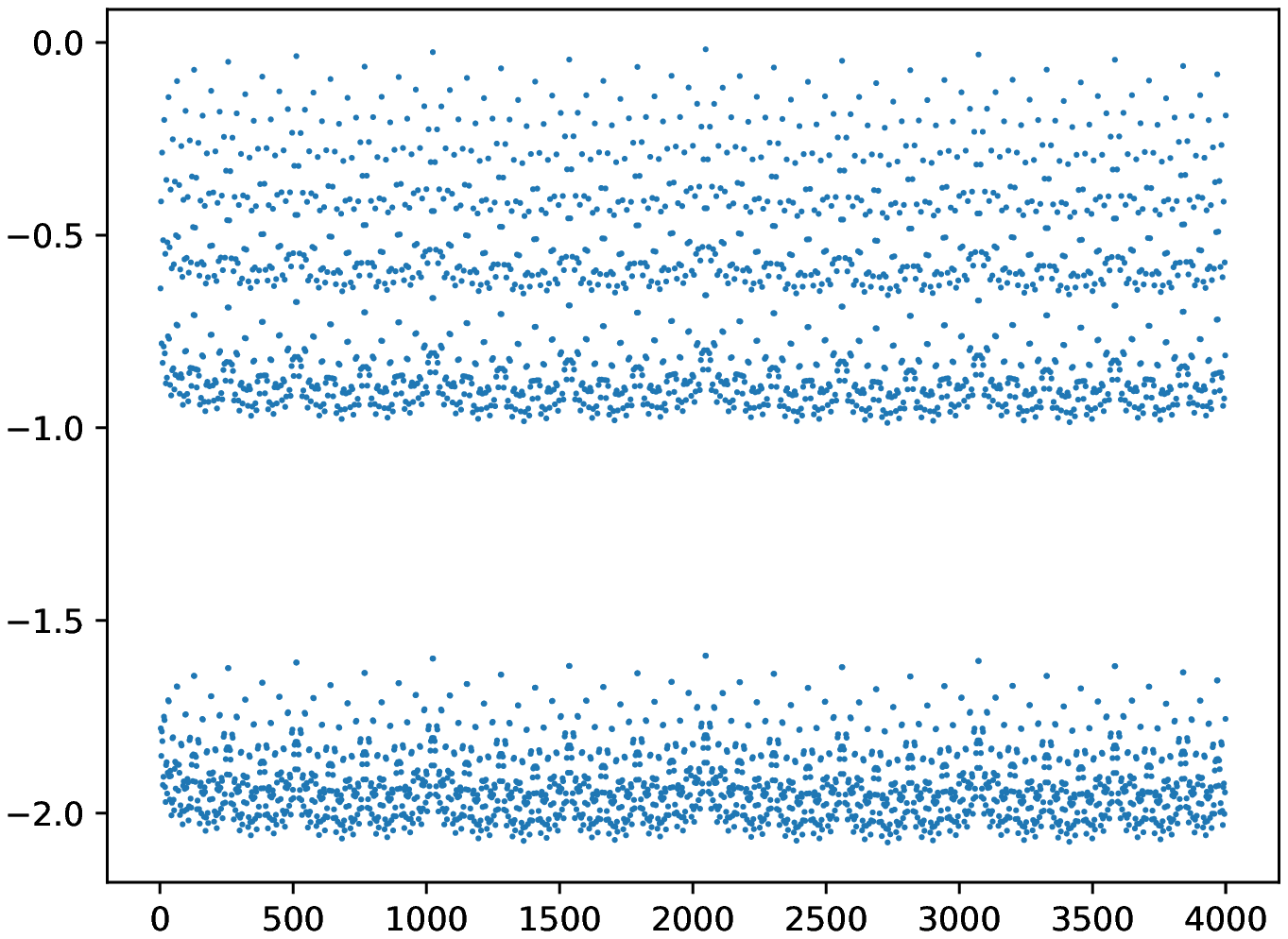}
\caption[]{{$\lambda = 1.5$}}    
\end{subfigure}
\quad
\begin{subfigure}[b]{0.475\textwidth}   
\centering 
\includegraphics[width=\textwidth]{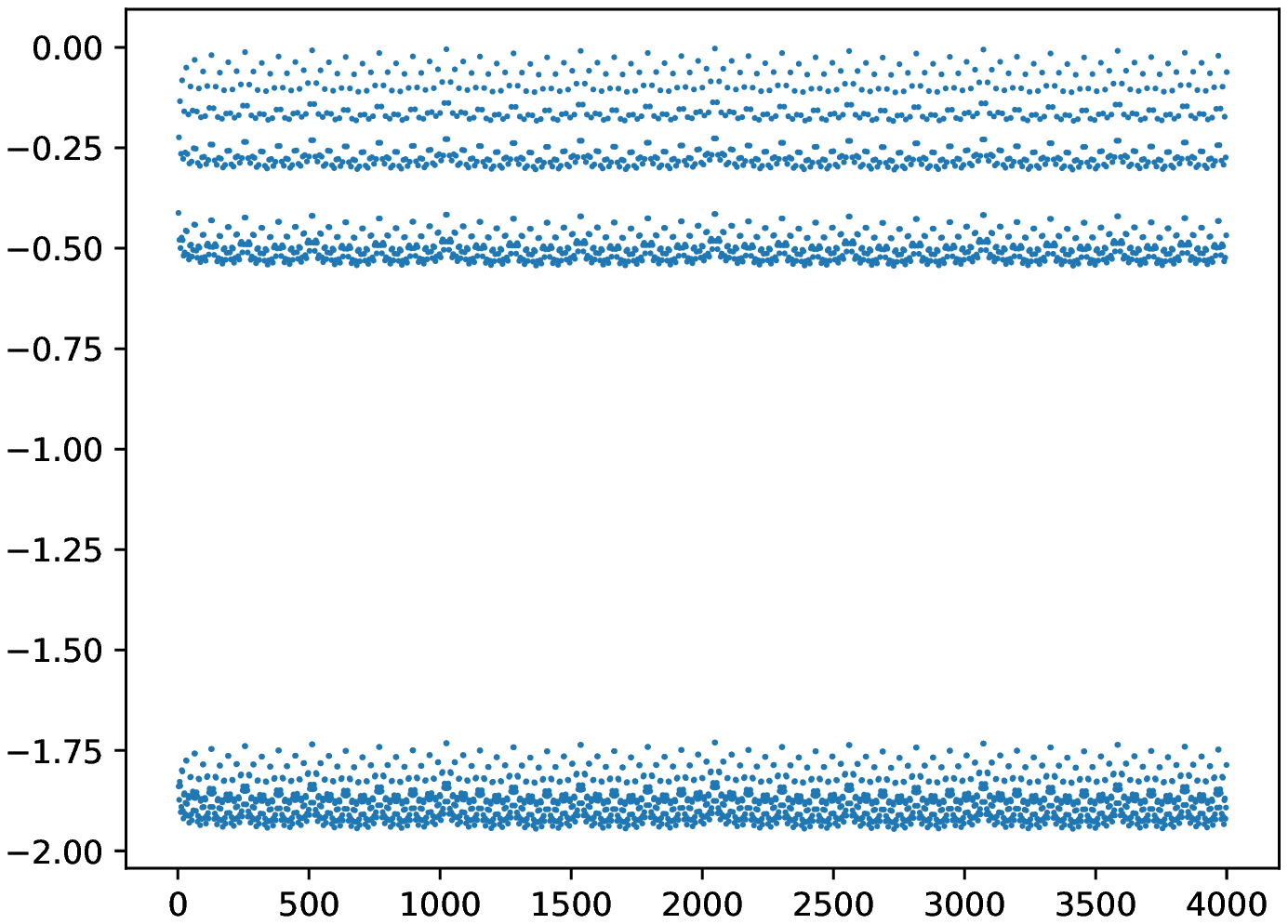}
\caption[]{{$\lambda = 1.7$}}    
\end{subfigure}
\caption[]{{Plots of sequences \eqref{eq:secordseqlgt1} for $1<\lambda<2$ and $2\leq N\leq 4000$.}}
\label{plotsecord4}
\end{figure}

Our next result concerns the second-order asymptotics of $H_{\lambda}(\alpha_{N,\lambda})$ in the case $\lambda=1$.

\begin{theorem}\label{theo:lambdaeq1}
Let $(a_{n})_{n=0}^{\infty}\subset S^{1}$ be a greedy $1$-energy sequence, and let $\alpha_{N,1}$, $N\geq 2$, be the configuration in \eqref{def:alphaNlambda}. The sequence
\begin{equation}\label{eq:secordexplamone}
\left(\frac{H_{1}(\alpha_{N,1})-N^{2} I_{1}(\sigma_{1})}{\log N}\right)_{N=2}^{\infty}
\end{equation}
is bounded and divergent. We have
\begin{align}
\limsup_{N\rightarrow\infty}\frac{H_{1}(\alpha_{N,1})-N^{2} I_{1}(\sigma_{1})}{\log N} & =0,\label{eq:limsupl1}\\
\liminf_{N\rightarrow\infty}\frac{H_{1}(\alpha_{N,1})-N^{2} I_{1}(\sigma_{1})}{\log N} & \leq -\frac{\pi}{9 \log 2}.\label{eq:liminfl1} 
\end{align}
\end{theorem}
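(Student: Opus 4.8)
The plan is to specialize the binary-representation identity \eqref{eq:diffsecord} to $\lambda=1$, where the normalizing power $N^{1-\lambda}$ equals $1$, so that
\[
H_{1}(\alpha_{N,1})-N^{2}I_{1}(\sigma_{1})=\sum_{k=1}^{p-1}\Big(\sum_{j=k+1}^{p}2^{n_{j}-n_{k}}\Big)\mathcal{R}_{1}(2^{n_{k}+1})+\sum_{k=1}^{p}\Big(1-\sum_{j=k+1}^{p}2^{n_{j}-n_{k}+1}\Big)\mathcal{R}_{1}(2^{n_{k}}),
\]
and to recall from \eqref{eq:asympRcal} that $\mathcal{R}_{1}(N)\to (2\pi)\,2\zeta(-1)=-\pi/3$, so that $(\mathcal{R}_{1}(N))$ is bounded, say $|\mathcal{R}_{1}(N)|\le C$. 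Boundedness of \eqref{eq:secordexplamone} then comes almost for free: by \eqref{eq:boundgeom} each inner geometric sum is $<1$, so both bracketed factors above are bounded in absolute value, and the identity yields $|H_{1}(\alpha_{N,1})-N^{2}I_{1}(\sigma_{1})|=O(\tau(N))$. Since $\tau(N)\le \log_{2}N+1$, this is $O(\log N)$, and the quotient \eqref{eq:secordexplamone} is bounded.

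For the limsup \eqref{eq:limsupl1} I would use that $H_{1}(\alpha_{N,1})\le\mathcal{L}_{1}(N)$ for every $N$ (equally spaced points maximize the energy, \eqref{opteqspaced}), so $H_{1}(\alpha_{N,1})-N^{2}I_{1}(\sigma_{1})\le \mathcal{R}_{1}(N)=O(1)$; dividing by $\log N$ forces $\limsup\le 0$. Equality holds for $N=2^{n}$, since $\alpha_{2^{n},1}$ consists of roots of unity, and there $(H_{1}(\alpha_{2^{n},1})-2^{2n}I_{1}(\sigma_{1}))/\log 2^{n}=\mathcal{R}_{1}(2^{n})/(n\log 2)\to 0$; hence the limsup equals $0$.

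The heart of the proof is the liminf \eqref{eq:liminfl1}. I would test the same arithmetically sparse subsequence used in Theorem~\ref{theo:caselgo}, namely $N(p)=\sum_{k=0}^{p-1}2^{2k}=(4^{p}-1)/3$, whose binary expansion has exponents $n_{k}=2(p-k)$ spaced by gap $2$. Substituting these exponents into the identity above — exactly the computation already carried out in \eqref{eq:spsb} at general $\lambda$ — yields
\[
H_{1}(\alpha_{N(p),1})-N(p)^{2}I_{1}(\sigma_{1})=\frac{1}{3}\sum_{m=0}^{p-1}\Big(1-\frac{1}{4^{m}}\Big)\mathcal{R}_{1}(2^{2m+1})+\frac{1}{3}\sum_{m=0}^{p-1}\Big(1+\frac{2}{4^{m}}\Big)\mathcal{R}_{1}(2^{2m}).
\]
Writing $\mathcal{R}_{1}(2^{2m+1})=-\pi/3+\delta_{m}$ and $\mathcal{R}_{1}(2^{2m})=-\pi/3+\gamma_{m}$ with $\delta_{m},\gamma_{m}\to 0$, the leading part is $-\tfrac{\pi}{9}\sum_{m=0}^{p-1}(2+4^{-m})=-\tfrac{2\pi}{9}p+O(1)$, while the contribution of the $\delta_{m},\gamma_{m}$ is $o(p)$ by a Cesàro estimate (bounded weights times vanishing terms). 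Thus $H_{1}(\alpha_{N(p),1})-N(p)^{2}I_{1}(\sigma_{1})=-\tfrac{2\pi}{9}p+o(p)$, and since $\log N(p)=2p\log 2+O(1)$, the quotient along this subsequence converges to $-\tfrac{2\pi/9}{2\log 2}=-\tfrac{\pi}{9\log 2}$. This gives \eqref{eq:liminfl1}, and divergence of \eqref{eq:secordexplamone} follows at once, since its limsup is $0$ while its liminf is $\le-\pi/(9\log 2)<0$.

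The main obstacle is twofold. First, recognizing the correct subsequence: heuristically, testing the geometric-gap families $n_{k}=n_{1}-(k-1)g$ leads one to maximize $(2^{g}-2)/\big(g(2^{g}-1)\big)$ over integers $g\ge1$, which is largest at $g=2$ and produces precisely the constant $\pi/(9\log 2)$; the $g=1$ family $N=2^{p}-1$ is misleading because its leading contributions cancel. Second, controlling the error, where the decaying-but-nonsummable weights $1\mp 4^{-m}$ multiply the convergent-but-nonconstant quantities $\mathcal{R}_{1}(2^{2m})$, so the limit must be extracted through an averaging argument rather than by summing a convergent series, as was possible in the range $1<\lambda<2$.
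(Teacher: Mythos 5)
Your proposal is correct and follows essentially the same route as the paper: the same $O(\tau(N))=O(\log N)$ bound for boundedness, the same comparison with $\mathcal{L}_{1}(N)$ along $N=2^{n}$ for the limsup, and the same subsequence $N(p)=(4^{p}-1)/3$ with a Ces\`{a}ro-type averaging of $\mathcal{R}_{1}(2^{k})\to-\pi/3$ for the liminf (the paper just writes out the $\epsilon$--$m_{\epsilon}$ truncation that your ``bounded weights times vanishing terms'' step abbreviates). Your closing heuristic about optimizing the gap $g$ is also exactly the paper's remark following the theorem, where the family \eqref{def:subseqNrp} yields the factor $(2^{r}-2)/(r(2^{r}-1))$ maximized at $r=2$.
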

\begin{proof}
For $N\geq 2$ as in \eqref{eq:Nbinrep}, we have
\begin{equation}\label{eq:expsecordleo}
\begin{aligned}
\frac{H_{1}(\alpha_{N,1})-N^2 I_{1}(\sigma_{1})}{\log N} & =
\sum_{k=1}^{p-1}\Big(\sum_{j=k+1}^{p} 2^{n_{j}-n_{k}}\Big)\,\frac{\mathcal{R}_{1}(2^{n_k+1})}{\log N}\\
& +\sum_{k=1}^{p}\Big(1-\sum_{j=k+1}^{p} 2^{n_{j}-n_{k}+1}\Big)\,\frac{\mathcal{R}_{1}(2^{n_{k}})}{\log N}.
\end{aligned}
\end{equation}
If $C$ is an upper bound for all $|\mathcal{R}_{1}(N)|$, applying \eqref{eq:boundgeom} and the triangle inequality we get
\begin{equation}\label{eq:acot}
\left|\frac{H_{1}(\alpha_{N,1})-N^2 I_{1}(\sigma_{1})}{\log N}\right|\leq \frac{4\,p \,C}{\log N}.
\end{equation}
It follows from \eqref{eq:Nbinrep} that $p\leq n_{1}+1$, and we have $\log N\geq \log (2^{n_{1}})=n_{1} \log 2$. These inequalities and \eqref{eq:acot} imply the boundedness of \eqref{eq:secordexplamone}.

We have $H_{1}(\alpha_{N,1})-N^{2} I_{1}(\sigma_{1})\leq \mathcal{R}_{1}(N)$ with equality if $N=2^{n}$, $n\geq 1$. Since we have $\lim_{N\rightarrow\infty} \mathcal{R}_{1}(N)/\log N=0$, we obtain \eqref{eq:limsupl1}.

The function $\zeta(s)$ satisfies $\zeta(-1)=-1/12$, hence according to \eqref{eq:asympRcal} we have
\begin{equation}\label{eq:asympR1}
\lim_{N\rightarrow\infty}\mathcal{R}_{1}(N)=-\frac{\pi}{3}.
\end{equation}
Consider the subsequence $N(p):=\frac{4^{p}-1}{3}$ defined in \eqref{def:specsub}. In virtue of \eqref{eq:spsb}, we have
\[
\frac{H_{1}(\alpha_{N(p),1})-N(p)^2 I_{1}(\sigma_{1})}{\log(N(p))}=\rho_{p,1}+\rho_{p,2}
\] 
where
\begin{align*}
\rho_{p,1} & :=\frac{1}{3}\,\frac{1}{\log(N(p))}\,\sum_{m=1}^{p-1}
\left(1-\frac{1}{4^{m}}\right)\,\mathcal{R}_{1}(2^{2m+1}),\\
\rho_{p,2} & :=\frac{1}{3}\,\frac{1}{\log(N(p))}\,\sum_{m=0}^{p-1}
\left(1+\frac{2}{4^{m}}\right)\,\mathcal{R}_{1}(2^{2m}).
\end{align*}

Let $\epsilon>0$ be fixed. By \eqref{eq:asympR1}, there exists $N_{\epsilon}\in\mathbb{N}$, which we assume to be odd, such that
\begin{equation}\label{eq:estR1}
\left|\mathcal{R}_{1}(2^{k})+\frac{\pi}{3}\right|<\epsilon \qquad\mbox{for all}\,\,k\geq N_{\epsilon}.
\end{equation}
Let $m_{\epsilon}:=(N_{\epsilon}-1)/2$. Then, we can write
\begin{align*}
\rho_{p,1} & =\frac{1}{3}\,\frac{1}{\log(N(p))}\left(\sum_{m=1}^{m_{\epsilon}-1}\left(1-\frac{1}{4^{m}}\right)\mathcal{R}_{1}(2^{2m+1})+\sum_{m=m_{\epsilon}}^{p-1}\left(1-\frac{1}{4^{m}}\right)\left(\mathcal{R}_{1}(2^{2m+1})+\frac{\pi}{3}\right)\right)\\
 & -\frac{\pi}{9}\frac{1}{\log(N(p))}\sum_{m=m_{\epsilon}}^{p-1}\left(1-\frac{1}{4^{m}}\right).
\end{align*}
Since $m_{\epsilon}$ is fixed, we have
\[
\lim_{p\rightarrow\infty}\frac{1}{\log(N(p))}\,\sum_{m=1}^{m_{\epsilon}-1}\left(1-\frac{1}{4^{m}}\right)\mathcal{R}_{1}(2^{2m+1})=0.
\]
Hence, applying \eqref{eq:estR1} we get
\begin{equation}\label{eq:estrhop1}
\left|\rho_{p,1}+\frac{\pi}{9}\frac{1}{\log(N(p))}\sum_{m=m_{\epsilon}}^{p-1}\left(1-\frac{1}{4^{m}}\right)\right|\leq o(1)+\frac{1}{3}\,\frac{\epsilon}{\log(N(p))}\,\sum_{m=m_{\epsilon}}^{p-1}\left(1-\frac{1}{4^{m}}\right).
\end{equation}
A simple calculation gives
\[
\lim_{p\rightarrow\infty}\frac{1}{\log(N(p))}\sum_{m=m_{\epsilon}}^{p-1}\left(1-\frac{1}{4^{m}}\right)=\frac{1}{\log 4}.
\]
So, letting $p\rightarrow\infty$ and then $\epsilon\rightarrow 0$ in \eqref{eq:estrhop1}, we get
\[
\lim_{p\rightarrow\infty} \rho_{p,1}=-\frac{\pi}{9 \log 4}.
\]
The same limit is valid for $\rho_{p,2}$, hence
\[
\liminf_{N\rightarrow\infty}\frac{H_{1}(\alpha_{N,1})-N^{2} I_{1}(\sigma_{1})}{\log N}\leq \lim_{p\rightarrow\infty}\frac{H_{1}(\alpha_{N(p),1})-N(p)^2 I_{1}(\sigma_{1})}{\log(N(p))}=-\frac{\pi}{9 \log 2},
\] 
which justifies \eqref{eq:liminfl1}.
\end{proof}

Figure~\ref{plotsecorder3} shows a plot of the sequence \eqref{eq:secordexplamone}. We remark that the value $-\pi/(9 \log 2)$ in the right-hand side of \eqref{eq:liminfl1} is the lowest limit value that we can get among subsequences $(N_{r}(p))_{p=1}^{\infty}$, where
\begin{equation}\label{def:subseqNrp}
N_{r}(p)=\sum_{k=0}^{p-1} 2^{rk}=\frac{2^{rp}-1}{2^{r}-1},\qquad r\in\mathbb{N},
\end{equation}
of which the sequence $(N(p))_{p=1}^{\infty}$ considered in the proof of Theorem~\ref{theo:lambdaeq1} is a particular case. Indeed, the reader can check that for the sequence \eqref{def:subseqNrp} we have 
\begin{equation}\label{eq:lpleo}
\lim_{p\rightarrow\infty}\frac{H_{1}(\alpha_{N_{r}(p),1})-N_{r}(p)^2 I_{1}(\sigma_{1})}{\log(N_{r}(p))}=-\frac{2^{r}-2}{r(2^{r}-1)}\,\frac{\pi}{3 \log 2}
\end{equation}
and the largest value of $\left\{\frac{2^{r}-2}{r(2^{r}-1)}\right\}_{r\in\mathbb{N}}$ is $1/3$. 

If $\tau(N)$ is the number of ones in the binary representation of $N$, and $\mathcal{N}\subset\mathbb{N}$ is a subsequence such that $(\tau(N))_{N\in\mathcal{N}}$ is bounded, then it follows from \eqref{eq:expsecordleo} that $(H_{1}(\alpha_{N,1})-N^{2} I_{1}(\sigma_{1}))_{N\in\mathcal{N}}$ is bounded, and therefore
\[
\lim_{N\in\mathcal{N}}\frac{H_{1}(\alpha_{N,1})-N^{2} I_{1}(\sigma_{1})}{\log N}=0.
\]
So in order to obtain non-zero limit points of the sequence \eqref{eq:secordexplamone}, such as the limits \eqref{eq:lpleo}, one needs to take subsequences $\mathcal{N}$ such that $\lim_{N\in\mathcal{N}}\tau(N)=\infty$.    

\begin{figure}[h]
\includegraphics[width=\textwidth]{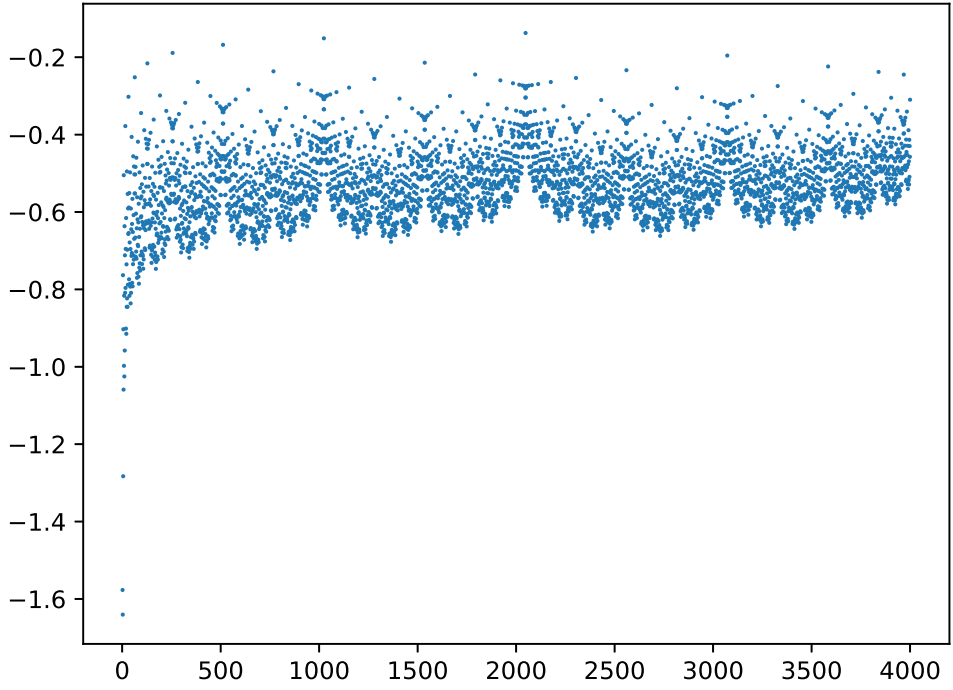}
\caption[]{{Plot of the sequence \eqref{eq:secordexplamone} for $2\leq N\leq 4000$, in the case $\lambda=1$.}}
\label{plotsecorder3}
\end{figure}

\section{The case $\lambda=2$}

\begin{theorem}\label{theo:caseletwo}
Let $d\geq 1$ be arbitrary. A sequence $(a_{n})_{n=0}^{\infty}\subset S^{d}$ is a greedy $\lambda$-energy sequence for $\lambda=2$, if and only if
\begin{equation}\label{eq:symmetry}
a_{2k+1}=-a_{2k},\quad\mbox{for every}\quad k\geq 0.
\end{equation}
If $(a_{n})_{n=0}^{\infty}\subset S^{d}$ is such a sequence, then for each $n\geq 1$,
\begin{equation}\label{eq:energy:lambda2}
\begin{aligned}
H_{2}(\alpha_{2n,2}) & =8 n^{2}\\
H_{2}(\alpha_{2n+1,2}) & =8 (n^2+n).
\end{aligned}
\end{equation}
For the potential \eqref{def:discrpot} we have, for each $n\geq 1$,
\[
U_{2n-1}(a_{2n-1})=U_{2n}(a_{2n})=4n.
\]
In particular,
\[
\lim_{N\rightarrow\infty}\frac{H_{2}(\alpha_{N,2})}{N^{2}}=\lim_{n\rightarrow\infty}\frac{U_{n}(a_{n})}{n}=2.
\]
\end{theorem}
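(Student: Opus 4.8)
The plan is to exploit the special algebraic structure of the case $\lambda=2$. For $x,a\in S^{d}$ we have $|x-a|^{2}=2-2\la x,a\ra$, so writing $S_{n}:=\sum_{k=0}^{n-1}a_{k}$ the potential \eqref{def:discrpot} collapses to the affine expression
\[
U_{n}(x)=2n-2\la x,S_{n}\ra.
\]
Maximizing over $x\in S^{d}$ is then immediate: if $S_{n}\neq 0$ the unique maximizer is $x=-S_{n}/|S_{n}|$ with maximal value $2n+2|S_{n}|$, while if $S_{n}=0$ the function $U_{n}$ is constant equal to $2n$ and every point of $S^{d}$ is a maximizer. This dichotomy drives all four assertions.

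For the equivalence, the direction ``greedy $\Rightarrow$ symmetry'' is exactly Theorem~\ref{theo:symmprop}, which holds for every $\lambda>0$. For the converse I would assume $a_{2k+1}=-a_{2k}$ for all $k$ and verify the greedy condition \eqref{recurcond} at each step. Grouping the points in consecutive pairs gives $S_{2k}=\sum_{i=0}^{k-1}(a_{2i}+a_{2i+1})=0$, so $U_{2k}\equiv 4k$ is constant; hence $a_{2k}$ (indeed any point) maximizes $U_{2k}$, and \eqref{recurcond} holds at even indices. At odd indices $S_{2k+1}=S_{2k}+a_{2k}=a_{2k}$, so $U_{2k+1}$ attains its maximum at the unique point $-a_{2k}$, which is precisely the assumed value $a_{2k+1}$. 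Thus the symmetric sequence is greedy.

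The potential values then drop out of the same computation: $U_{2k}(a_{2k})=4k$ and $U_{2k+1}(a_{2k+1})=2(2k+1)+2|a_{2k}|=4(k+1)$, and reindexing $n=k+1$ in the odd case yields $U_{2n-1}(a_{2n-1})=U_{2n}(a_{2n})=4n$. For the energy I would use the identity $H_{2}(\alpha_{N,2})=2N^{2}-2|S_{N}|^{2}$, obtained by expanding $|y_{i}-y_{j}|^{2}=2-2\la y_{i},y_{j}\ra$ in \eqref{def:energy} and using $\sum_{i,j}\la y_{i},y_{j}\ra=|S_{N}|^{2}$ together with $\sum_{i}|y_{i}|^{2}=N$. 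Since $S_{2n}=0$ and $|S_{2n+1}|=|a_{2n}|=1$, this gives $H_{2}(\alpha_{2n,2})=8n^{2}$ and $H_{2}(\alpha_{2n+1,2})=2(2n+1)^{2}-2=8(n^{2}+n)$. (Alternatively, one may sum $H_{2}(\alpha_{N,2})=2\sum_{i=1}^{N-1}U_{i}(a_{i})$ using the potential values just found.) The two limits follow at once, since $|S_{N}|\leq 1$ forces $H_{2}(\alpha_{N,2})/N^{2}\to 2$, while $U_{n}(a_{n})=4\lceil n/2\rceil$ gives $U_{n}(a_{n})/n\to 2$.

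The only genuinely delicate point is the non-uniqueness at even indices in the converse direction: when $S_{2k}=0$ the potential $U_{2k}$ is constant, so \eqref{recurcond} imposes no constraint whatsoever on $a_{2k}$. Recognizing that the defining maximization requires $a_{2k}$ merely to be \emph{a} maximizer, never a unique one, is exactly what makes the characterization an honest ``if and only if'' at $\lambda=2$, with complete freedom in the even-indexed points, in sharp contrast with the rigidity ($\lambda<2$) and triviality ($\lambda>2$) of the neighboring regimes.
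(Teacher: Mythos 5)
Your proof is correct, but it takes a genuinely different route from the paper's. The paper's own argument rests on the pointwise identity $|x-a|^{2}+|x+a|^{2}=4$ and proceeds by induction: pairing $\{a_{2k},a_{2k+1}\}=\{a_{2k},-a_{2k}\}$ collapses the potential at odd steps to $4(n+1)+|x-a_{2n+2}|^{2}$, and the energy formulas \eqref{eq:energy:lambda2} are likewise obtained by an inductive pairing of the summands. You instead linearize the whole problem via $|x-a|^{2}=2-2\la x,a\ra$, obtaining $U_{n}(x)=2n-2\la x,S_{n}\ra$ with $S_{n}=\sum_{k=0}^{n-1}a_{k}$ and the closed form $H_{2}(\alpha_{N,2})=2N^{2}-2|S_{N}|^{2}$; this is precisely the approach the paper attributes to its reviewer in the remark containing \eqref{eq:sugrev1} and mentions, without carrying out, as an alternative proof. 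What your route buys is that every claim becomes a direct computation with no induction: the dichotomy $S_{n}=0$ versus $S_{n}\neq 0$ simultaneously yields the characterization, the exact potential values, and the energy identities, and it isolates cleanly the source of non-uniqueness at even indices (constancy of $U_{2k}$), which is the point exploited later in Remark~\ref{rmk:distrletwo}. The paper's pairing argument is more elementary in that it never leaves the metric formulation, and it parallels the structure of the proofs for $\lambda>2$, but it establishes less explicit information (no formula for $U_{n}$ at arbitrary $x$). All your individual steps check out, including the deduction of the forward implication from Theorem~\ref{theo:symmprop} and the two limits.
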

\begin{proof}
The proof is based on the fact that for any $a, x\in S^{d}$,
\begin{equation}\label{eq:trivialid}
|x-a|^{2}+|x+a|^{2}=4.
\end{equation}

Let $(a_{n})_{n=0}^{\infty}\subset S^{d}$ be a greedy $\lambda$-energy sequence for $\lambda=2$. Clearly, \eqref{eq:symmetry} holds for $k=0$. Assume that \eqref{eq:symmetry} is valid for all $0\leq k\leq n$. By definition, the point $a_{2n+3}$ must maximize the function
\begin{align*}
\sum_{k=0}^{n}(|x-a_{2k}|^{2}+|x-a_{2k+1}|^{2})+|x-a_{2n+2}|^2 & =\sum_{k=0}^{n}(|x-a_{2k}|^{2}+|x+a_{2k}|^{2})+|x-a_{2n+2}|^2\\
& =4(n+1)+|x-a_{2n+2}|^2,\quad x\in S^{d}.
\end{align*}
Hence $a_{2n+3}=-a_{2n+2}$, which proves \eqref{eq:symmetry} for $k=n+1$. Conversely, it is clear that any sequence $(a_{n})_{n=0}^{\infty}\subset S^{d}$ that satisfies \eqref{eq:symmetry}, is a greedy $\lambda$-energy sequence for $\lambda=2$.

One proves \eqref{eq:energy:lambda2} by induction. This identity is valid for $n=1$. Assume that it is valid for $n$. Since
\[
H_{2}(\alpha_{2n+2})=H_{2}(\alpha_{2n+1})+2\sum_{j=0}^{2n}|a_{2n+1}-a_{j}|^{2}
\]
pairing $\{a_{2k},a_{2k+1}\}=\{a_{2k},-a_{2k}\}$ in the summation expression and using \eqref{eq:trivialid}, we obtain $H_{2}(\alpha_{2n+2})=8(n+1)^{2}$. Similarly, one checks that $H_{2}(\alpha_{2n+3})=8(n+2)(n+1)$.

The rest of the claims follow immediately.
\end{proof}

\begin{remark}
From \eqref{eq:symmetry} and \eqref{eq:trivialid} we deduce that the function $U_{2k}(x)\equiv 4k$ is constant on $S^{d}$, for every $k\geq 1$. 
\end{remark}

\begin{remark}
Our reviewer has indicated to us the interesting formula 
\begin{equation}\label{eq:sugrev1}
U_{n}(x)=2n-2\,\langle x,\sum_{k=0}^{n-1}a_{k}\rangle, \qquad x\in S^{d}, 
\end{equation}
valid for $\lambda=2$. This formula can be used for an alternative proof of Theorem~\ref{theo:caseletwo}.
\end{remark}

\begin{remark}\label{rmk:distrletwo}
If $(a_{n})_{n=0}^{\infty}\subset S^{d}$ is a greedy $\lambda$-energy sequence for $\lambda=2$, then the associated sequence $(\sigma_{N})$ defined in \eqref{def:countmeas} is not necessarily convergent. The following sequence on $S^{2}\subset\mathbb{R}^{3}$ illustrates this claim. Let
\[
y_{0}:=(1,0,0),\quad y_{1}:=(-1,0,0),\quad y_{2}:=(0,1,0),\quad y_{3}:=(0,-1,0),
\]
and let the first four points of the sequence be defined as
\[
a_{i}:=y_{i},\qquad 0\leq i\leq 3.
\]
The rest of the sequence is constructed inductively. Suppose that for an integer $m\geq 2$, the configuration $\alpha_{2^{m}}$ formed by the first $2^{m}$ points of the sequence, consists of $2^{m-2}$ points at each $y_{i}$, $0\leq i\leq 3$. Then, we have 
\begin{equation}\label{eq:meas:1}
\sigma_{2^{m}}=\frac{2^{m-2}}{2^{m}}(\delta_{y_{0}}+\delta_{y_1}+\delta_{y_2}+\delta_{y_3})=\frac{1}{4}(\delta_{y_{0}}+\delta_{y_1}+\delta_{y_2}+\delta_{y_3}).
\end{equation} 
We take the next $2^{m-1}$ points alternating between $y_0$ and $y_1=-y_{0}$, i.e., we define
\[
a_{2^{m}+k-1}:=(-1)^{k-1} y_{0},\qquad 1\leq k\leq 2^{m-1}.
\]
Then, the configuration $\alpha_{2^{m}+2^{m-1}}=\alpha_{3\cdot 2^{m-1}}$ consists of $2^{m-1}$ points at each $y_{0}$, $y_{1}$, and $2^{m-2}$ points at each $y_{2}$, $y_{3}$. Thus,
\begin{equation}\label{eq:meas:2}
\sigma_{3\cdot 2^{m-1}}=\frac{1}{3}(\delta_{0}+\delta_{1})+\frac{1}{6}(\delta_{y_{2}}+\delta_{y_{3}}).
\end{equation}
Now we take the next $2^{m-1}$ points alternating between $y_{2}$ and $y_{3}=-y_{2}$, i.e., we define
\[
a_{3\cdot 2^{m-1}+k-1}:=(-1)^{k-1} y_{2},\qquad 1\leq k\leq 2^{m-1}.
\]
With this definition, we have constructed the configuration $\alpha_{2^{m+1}}$, consisting of $2^{m-1}$ points at each $y_{i}$, $0\leq i\leq 3$. Therefore,
\[
\sigma_{2^{m+1}}=\sigma_{2^{m}},
\]
and the inductive construction is concluded. Since \eqref{eq:meas:1} and \eqref{eq:meas:2} are valid for every $m\geq 2$, we see that the sequence $(\sigma_{N})$ is not convergent. 

However, \eqref{eq:symmetry} implies that any convergent subsequence of $(\sigma_{N})$ will converge to a probability measure $\sigma$ on $S^{d}$ with center of mass at the origin, i.e., $\sigma$ satisfies
\begin{equation}\label{eq:centermass}
\int_{S^{d}}x_{i}\,d\sigma(x)=0,\qquad 1\leq i\leq d+1,\quad x=(x_{1},\ldots,x_{d+1}).
\end{equation}
\end{remark}

\section{The case $\lambda>2$}

\begin{lemma}\label{lem:caselg2}
Let $d\geq 1$, and let $a\in S^{d}$ be fixed. If $\lambda>2$, the function
\[
|x-a|^{\lambda}+|x+a|^{\lambda},\qquad x\in S^{d},
\]
attains its maximum value only at the points $x=a$ and $x=-a$.
\end{lemma}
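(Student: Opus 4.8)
The plan is to reduce the problem to a one-variable analysis via the inner product $t=\langle x,a\rangle$. For $x,a\in S^{d}$ we have $|x\pm a|^{2}=2\pm 2\langle x,a\rangle$, so writing $t=\langle x,a\rangle\in[-1,1]$, the function in question depends on $x$ only through $t$:
\[
|x-a|^{\lambda}+|x+a|^{\lambda}=(2-2t)^{\lambda/2}+(2+2t)^{\lambda/2}=2^{\lambda/2}\,g(t),
\]
where $g(t):=(1-t)^{\lambda/2}+(1+t)^{\lambda/2}$. Since $t=1$ holds exactly when $x=a$ and $t=-1$ exactly when $x=-a$ (both being unit vectors), while every intermediate value $t\in(-1,1)$ is realized by points on $S^{d}$, it suffices to show that $g$ attains its maximum on $[-1,1]$ only at the endpoints $t=\pm 1$.

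The next step is to differentiate. Set $p=\lambda/2>1$. Then
\[
g'(t)=p\bigl[(1+t)^{p-1}-(1-t)^{p-1}\bigr],\qquad t\in(-1,1).
\]
Because $p-1>0$ — and this is precisely where the hypothesis $\lambda>2$ enters — the map $s\mapsto s^{p-1}$ is strictly increasing on $(0,\infty)$, so $g'(t)$ has the same sign as $t$: it is negative on $(-1,0)$ and positive on $(0,1)$. Hence $g$ is strictly decreasing on $[-1,0]$ and strictly increasing on $[0,1]$, with a unique interior minimum at $t=0$. Consequently, the maximum of $g$ over $[-1,1]$ is attained only at the two endpoints, where $g(1)=g(-1)=2^{p}$.

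Translating back, one obtains $|x-a|^{\lambda}+|x+a|^{\lambda}=2^{\lambda/2}g(t)\leq 2^{\lambda/2}\cdot 2^{\lambda/2}=2^{\lambda}$, with equality if and only if $t=\pm 1$, i.e. $x=\pm a$, which is the claim. I do not anticipate a serious obstacle here; the only point requiring care is to confirm that $\lambda>2$ is exactly what forces $p-1>0$ and hence the correct sign of $g'$. For comparison, at $\lambda=2$ one has $g'\equiv 0$ (consistent with the earlier observation in Remark~\ref{rmk:pot} that $U_{2k}$ is constant), while for $\lambda<2$ the sign of $g'$ reverses and $t=0$ becomes a maximum instead, so the monotonicity argument genuinely relies on the range $\lambda>2$.
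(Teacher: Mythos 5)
Your proof is correct, and it takes a genuinely different route from the paper's. You reduce the problem in one stroke to the scalar variable $t=\langle x,a\rangle$ via $|x\pm a|^{2}=2\pm 2\langle x,a\rangle$, and then a single elementary monotonicity argument for $g(t)=(1-t)^{\lambda/2}+(1+t)^{\lambda/2}$ settles every dimension $d\geq 1$ at once; the equality case is handled cleanly because $t=\pm 1$ characterizes $x=\pm a$ for unit vectors. The paper instead first treats $d=1$ by parametrizing the circle, writing the function as $2^{\lambda}\bigl(\sin^{\lambda}(\theta/2)+\cos^{\lambda}(\theta/2)\bigr)$ and locating its critical points, and then extends to general $d$ by induction, intersecting $S^{d}$ with a hyperplane through the origin containing $a$, $-a$, and $x$ to reduce to a copy of $S^{d-1}$. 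What your approach buys is uniformity and brevity: no trigonometry, no induction on dimension, and the roles of the three regimes $\lambda<2$, $\lambda=2$, $\lambda>2$ become transparent from the sign of $p-1$ with $p=\lambda/2$ (your closing observation that $g'\equiv 0$ at $\lambda=2$ matches the paper's remark that $U_{2k}$ is then constant). The paper's induction is slightly more geometric but does no more work for this particular kernel, since $|x-a|$ genuinely depends on $x$ only through $\langle x,a\rangle$.
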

\begin{proof}
In the case $d=1$, the claim is equivalent to show that the function
\[
f(\theta):=|e^{i\theta}-1|^{\lambda}+|e^{i\theta}+1|^{\lambda},\qquad 0\leq \theta\leq \pi,
\]
attains its maximum value on the interval $[0,\pi]$ only at the points $\theta=0$ and $\theta=\pi$. Indeed,
\begin{align*}
f(\theta) & =2^{\lambda}\left(\sin^{\lambda}\left(\frac{\theta}{2}\right)+\cos^{\lambda}\left(\frac{\theta}{2}\right)\right)\\
f'(\theta) & =\lambda\,2^{\lambda-1}\left(\sin^{\lambda-1}\left(\frac{\theta}{2}\right)\cos\left(\frac{\theta}{2}\right)-\cos^{\lambda-1}\left(\frac{\theta}{2}\right)\sin\left(\frac{\theta}{2}\right)\right),\qquad 0<\theta<\pi.
\end{align*}
Hence, the only critical point of $f$ on $(0,\pi)$ is $\theta=\pi/2$. We have $f(0)=f(\pi)=2^{\lambda}>f(\pi/2)=2^{1+\frac{\lambda}{2}}$, which proves the claim in the case $d=1$.

The proof can be completed now using induction on $d$. Assume the result holds on $S^{d-1}$, for some $d\geq 2$. Let $a\in S^{d}$ be fixed, and let $x\in S^{d}\setminus\{a,-a\}$. Since $d\geq 2$, there exists a hyperplane $\pi$ of $\mathbb{R}^{d+1}$ that passes through the origin and contains the points $a$, $-a$, $x$. The set $S^{d}\cap \pi$, which contains these three points, is isometric to $S^{d-1}$, which can be identified, for example, with the subset  of $S^{d}$ given by $S^{d}\cap[x_{d+1}=0]$. By the induction hypothesis, $|x-a|^{\lambda}+|x+a|^{\lambda}<2^{\lambda}$. Since $x\in S^{d}\setminus\{a,-a\}$ was taken arbitrarily, the result is proven.
\end{proof}

\begin{theorem}\label{theo:caselamg2}
Let $d\geq 1$ be arbitrary, and let $(a_{n})_{n=0}^{\infty}\subset S^{d}$ be a greedy $\lambda$-energy sequence for $\lambda>2$. Then, for each $k\geq 0$,
\begin{equation}\label{eq:struct}
\{a_{2k},a_{2k+1}\}=\{a_{0},-a_{0}\}.
\end{equation}
For each $n\geq 1$,
\[
\begin{aligned}
H_{\lambda}(\alpha_{2n,\lambda}) & =2^{\lambda+1}\,n^{2}\\
H_{\lambda}(\alpha_{2n+1,\lambda}) & =2^{\lambda+1}\,(n^2+n).
\end{aligned}
\]
For the potential \eqref{def:discrpot} we have, for each $n\geq 1$,
\[
U_{2n-1}(a_{2n-1})=U_{2n}(a_{2n})=n\,2^{\lambda}.
\]
In particular,
\[
\lim_{N\rightarrow\infty}\frac{H_{\lambda}(\alpha_{N,\lambda})}{N^{2}}=\lim_{n\rightarrow\infty}\frac{U_{n}(a_{n})}{n}=2^{\lambda-1},
\]
and  
\[
\sigma_{N,\lambda}\stackrel{\ast}{\longrightarrow}\frac{1}{2}\delta_{a_{0}}+\frac{1}{2}\delta_{-a_{0}}.
\]
\end{theorem}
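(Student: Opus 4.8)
The plan is to reduce the entire theorem to the two results already in hand: the symmetry relation $a_{2k+1}=-a_{2k}$ from Theorem~\ref{theo:symmprop} and the sharp maximization in Lemma~\ref{lem:caselg2}. The heart of the matter is the structural claim \eqref{eq:struct} that every point of the sequence lies in $\{a_0,-a_0\}$; once that is established, all energy, potential, and distribution statements reduce to elementary counting. First I would prove \eqref{eq:struct} by induction on $k$, the base case being immediate from $a_1=-a_0$. For the inductive step I would assume $\{a_{2j},a_{2j+1}\}=\{a_0,-a_0\}$ for $0\le j\le k$, observe that then the section $\alpha_{2k+2}$ carries exactly $k+1$ points at each of $a_0$ and $-a_0$, so that the potential \eqref{def:discrpot} collapses to
\[
U_{2k+2}(x)=(k+1)\bigl(|x-a_0|^{\lambda}+|x+a_0|^{\lambda}\bigr),
\]
and then invoke Lemma~\ref{lem:caselg2} to conclude that its maximizer $a_{2k+2}$ must be $a_0$ or $-a_0$. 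The relation $a_{2k+3}=-a_{2k+2}$ then closes the induction.

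With \eqref{eq:struct} available, I would compute the energies by counting antipodal pairs, using that $|a_0-(-a_0)|^{\lambda}=2^{\lambda}$ while coincident points contribute nothing. Since $\alpha_{2n,\lambda}$ has $n$ points at each pole, the ordered sum in \eqref{def:energy} runs over $2n^2$ mixed pairs, yielding $H_{\lambda}(\alpha_{2n,\lambda})=2^{\lambda+1}n^2$; adjoining $a_{2n}\in\{a_0,-a_0\}$ produces $n+1$ points at one pole and $n$ at the other, hence $2n(n+1)$ mixed pairs and the odd-index formula. For the potentials, evaluating $U_{2n}$ at $a_{2n}$ sees the $n$ points at the opposite pole, each contributing $2^{\lambda}$, so $U_{2n}(a_{2n})=n\,2^{\lambda}$; an analogous count at $a_{2n-1}$ (which is antipodal to $a_{2n-2}$) again sees exactly $n$ points at its antipode and gives the same value.

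Finally I would obtain the limits by dividing the energy formulas by $N^2$ along $N=2n$ and $N=2n+1$ and the potential formulas by $n$, each passing to the common limit $2^{\lambda-1}$. The weak-star convergence follows from \eqref{eq:struct}: the measure \eqref{def:countmeas} is supported on $\{a_0,-a_0\}$, with exactly balanced weights $\tfrac12,\tfrac12$ when $N=2n$ and weights $(n+1)/(2n+1),\,n/(2n+1)$ when $N=2n+1$, both tending to $1/2$.

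I expect the only genuine step to be the inductive proof of \eqref{eq:struct}, whose crux is the collapse of $U_{2k+2}$ to a single antipodal-pair potential; this is precisely where Theorem~\ref{theo:symmprop} is needed to fold the sum and where the \emph{strictness} of Lemma~\ref{lem:caselg2}---that the maximum is attained \emph{only} at $\pm a_0$, and not along any continuum---is indispensable. All subsequent steps are bookkeeping, with the only mild care being the off-by-one imbalance in odd-indexed sections, which affects $U_{2n-1}(a_{2n-1})$ and the weights of $\sigma_{2n+1,\lambda}$ but disappears in the limit.
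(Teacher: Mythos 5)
Your proposal is correct and follows essentially the same route as the paper: an induction establishing \eqref{eq:struct} whose key step collapses $U_{2k+2}$ to $(k+1)\bigl(|x-a_0|^{\lambda}+|x+a_0|^{\lambda}\bigr)$ and invokes the strict maximization of Lemma~\ref{lem:caselg2}, after which everything else is counting. The only cosmetic difference is that you close the induction by citing Theorem~\ref{theo:symmprop} for $a_{2k+3}=-a_{2k+2}$, whereas the paper re-derives that relation in place; both are valid, and your bookkeeping of the energies, potentials, and limiting measure is accurate.
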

\begin{proof}
Let $(a_{n})_{n=0}^{\infty}\subset S^{d}$ be a greedy $\lambda$-energy sequence for $\lambda>2$. Obviously, $a_{1}=-a_{0}$, so \eqref{eq:struct} holds for $k=0$. Assume that \eqref{eq:struct} is valid for all $0\leq k\leq n-1$, for some $n\geq 1$. Then $\alpha_{2n,\lambda}$ consists of $n$ points at $x=a_{0}$ and $n$ points at $x=-a_{0}$. So the point $a_{2n}$ must maximize the function
\[
\sum_{k=0}^{2n-1}|x-a_{k}|^{\lambda}=n(|x-a_{0}|^{\lambda}+|x+a_{0}|^{\lambda}),\qquad x\in S^{d}.
\]
By Lemma~\ref{lem:caselg2}, we have $a_{2n}=a_{0}$ or $a_{2n}=-a_{0}$. If $a_{2n}=a_{0}$, then $a_{2n+1}$ must maximize the function
\[
n(|x-a_{0}|^{\lambda}+|x+a_{0}|^{\lambda})+|x-a_{0}|^{\lambda},
\]
which clearly has a unique maximum at $x=-a_{0}$, so $a_{2n+1}=-a_{0}$. Similarly, $a_{2n}=-a_{0}$ implies that $a_{2n+1}=a_{0}$. So \eqref{eq:struct} is proved by induction.

The rest of the claims follow immediately. 
\end{proof}

\begin{remark}
For $\lambda>2$, the energy of any maximal distribution is $2^{\lambda-1}$. It follows from Theorem~\ref{theo:caselamg2} that for $\lambda>2$ and $N\geq 2$,
\[
H_{\lambda}(\alpha_{N,\lambda})-N^2\,2^{\lambda-1}=\begin{cases}
0 & \mbox{if}\,\,N\,\,\mbox{is even}\\
-2^{\lambda-1} & \mbox{if}\,\,N\,\,\mbox{is odd}.
\end{cases}
\]
For $n\geq 1$,
\[
U_{n}(a_{n})-n\,2^{\lambda-1}=\begin{cases}
0 & \mbox{if}\,\,n\,\,\mbox{is even}\\
2^{\lambda-1} & \mbox{if}\,\,n\,\,\mbox{is odd}.
\end{cases}
\]
These identities are also valid for $\lambda=2$.
\end{remark}

\bigskip

\noindent\textbf{Acknowledgements:} We thank the reviewer of this paper for sharing with us an interesting approach to the study of some of the problems discussed in this work.

\end{document}